\newcommand{\klockan}{\the\hours:{\ifnum\minutes<10 0\fi}\the\minutes}
\newcommand{\tid}{\today\ \klockan}
\newcommand{\prtid}{\smash{\raise 10mm \hbox{\LaTeX ed \tid}}}
\renewcommand{\prtid}{}
\def\sectionmark#1{} %\markboth{{\sectnr #1}}{{\sectnr #1}}} %Journal
\def\subsectionmark#1{}
\newcommand{\sectnr}{\ifnum \c@secnumdepth >\z@
                 \thesection.\hskip 1em\relax \fi}
\def\@evenhead{\footnotesize\rm\thepage\hfil\leftmark\hfil\llap{\prtid}}
\def\@oddhead{\footnotesize\rm\rlap{\prtid}\hfil\rightmark\hfil\thepage}
\def\tableofcontents{\section*{Contents} %\@mkboth{Contents}{Contents}} %Journal
 \@starttoc{toc}}
\def\@biblabel#1{#1.}
\let\Thebibliography=\thebibliography
\renewcommand{\thebibliography}[1]{\def\@mkboth##1##2{}\Thebibliography{#1}
\addcontentsline{toc}{section}{References}
\frenchspacing % Maybe not needed
% Deleting extra vertical space
\setlength{\@topsep}{0pt}% Delete if extra space before list
\setlength{\itemsep}{0pt}%
\setlength{\parskip}{0pt plus 2pt}%
}
\def\mdots@{\mathinner.\nonscript\!.%
 \ifx\next,.\else\ifx\next;.\else\ifx\next..\else
 \nonscript\!\mathinner.\fi\fi\fi}
\let\ldots\mdots@
\let\cdots\mdots@
\let\dotso\mdots@
\let\dotsb\mdots@
\let\dotsm\mdots@
\let\dotsc\mdots@
\def\vdots{\vbox{\baselineskip2.8\p@ \lineskiplimit\z@
    \kern6\p@\hbox{.}\hbox{.}\hbox{.}\kern3\p@}}
\def\ddots{\mathinner{\mkern1mu\raise8.6\p@\vbox{\kern7\p@\hbox{.}}%
    \raise5.8\p@\hbox{.}\raise3\p@\hbox{.}\mkern1mu}}
\let\Enumerate=\enumerate
\renewcommand{\enumerate}{\Enumerate%
% Deleting extra vertical space
\setlength{\@topsep}{0pt}% Delete if extra space before list
\setlength{\itemsep}{0pt}%
\setlength{\parskip}{0pt plus 1pt}%
\renewcommand{\theenumi}{\textup{(\alph{enumi})}}%
\renewcommand{\labelenumi}{\theenumi}%
}
\let\endEnumerate=\endenumerate
\renewcommand{\endenumerate}{\endEnumerate\unskip}
\def\@seccntformat#1{\csname the#1\endcsname.\quad}
\newcommand{\authortitle}[2]{\author{#1}\title{#2}\markboth{#1}{#2}}
\newcommand{\auth}[2]{{#1, #2.}}
\def\idxauth{\auth}
\newcommand{\art}[6]{{\sc #1, \rm #2, \it #3\/ \bf #4 \rm (#5), \mbox{#6}.}}
\newcommand{\artnopt}[6]{{\sc #1, \rm #2, \it #3\/ \bf #4 \rm (#5), \mbox{#6}}}
\newcommand{\artprep}[3]{{\sc #1, \rm #2, \it #3.}}
\newcommand{\book}[3]{{\sc #1, \it #2, \rm #3.}}
\newcommand{\AND}{{\rm and }}
\newtheoremstyle{descriptive}%
  {\topsep}   %{\medskipamount}          % Space above
  {\topsep}   %  {\medskipamount}          % Space below
  {\rmfamily} % Body font
  {}          % Indent
  {\bfseries} % Head font
  {.}         % Punctuation after thm head
  { }         % Space after thm head
  {}          % Thm head spec(?)
\newtheoremstyle{propositional}%
  {\topsep}   %  {\medskipamount}          % Space above
  {\topsep}   %  {\medskipamount}          % Space below
  {\itshape}  % Body font
  {}          % Indent
  {\bfseries} % Head font
  {.}         % Punctuation after thm head
  { }         % Space after thm head
  {}          % Thm head spec(?)
\theoremstyle{propositional}
\newtheorem{thm}{Theorem}[section]
\newtheorem{prop}[thm]{Proposition}
\newtheorem{lem}[thm]{Lemma}
\newtheorem{cor}[thm]{Corollary}
\theoremstyle{descriptive}
\newtheorem{deff}[thm]{Definition}
\newtheorem{example}[thm]{Example}
\newtheorem{openprob}[thm]{Open problem}
\renewenvironment{proof}[1][\proofname]{\par
  \pushQED{\qed}%
  \normalfont
%\topsep6\p@\@plus6\p@\relax % Removed by Anders Bj\"orn
  \trivlist
  \item[\hskip\labelsep
        \itshape
    #1\@addpunct{.}]\ignorespaces
}{%
  \popQED\endtrivlist\@endpefalse
}
\newcommand{\setm}{\setminus}
\renewcommand{\emptyset}{\varnothing}
\DeclareMathOperator{\diam}{diam}
\DeclareMathOperator{\dist}{dist}
\DeclareMathOperator{\inner}{inner}
\DeclareMathOperator{\interior}{int}
\newcommand{\bdry}{\partial}
\newcommand{\bdy}{\bdry}
{\catcode`p =12 \catcode`t =12 \gdef\eeaa#1pt{#1}}      % Get slantfactor
\def\accentadjtext#1{\setbox0\hbox{$#1$}\kern   % Convert it with height
                \expandafter\eeaa\the\fontdimen1\textfont1 \ht0 }
\def\accentadjscript#1{\setbox0\hbox{$#1$}\kern % Convert it with height
                \expandafter\eeaa\the\fontdimen1\scriptfont1 \ht0 }
\def\accentadjscriptscript#1{\setbox0\hbox{$#1$}\kern   % Convert it with height
                \expandafter\eeaa\the\fontdimen1\scriptscriptfont1 \ht0 }
\def\accentadjtextback#1{\setbox0\hbox{$#1$}\kern       % Convert it with height
                -\expandafter\eeaa\the\fontdimen1\textfont1 \ht0 }
\def\accentadjscriptback#1{\setbox0\hbox{$#1$}\kern     % Convert it with height
                -\expandafter\eeaa\the\fontdimen1\scriptfont1 \ht0 }
\def\accentadjscriptscriptback#1{\setbox0\hbox{$#1$}\kern % Convert it with height
                -\expandafter\eeaa\the\fontdimen1\scriptscriptfont1 \ht0 }
\def\itoverline#1{{\mathsurround0pt\mathchoice
        {\rlap{$\accentadjtext{\displaystyle #1}
                \accentadjtext{\vrule height1.593pt}
                \overline{\phantom{\displaystyle #1}
                \accentadjtextback{\displaystyle #1}}$}{#1}}
        {\rlap{$\accentadjtext{\textstyle #1}
                \accentadjtext{\vrule height1.593pt}
                \overline{\phantom{\textstyle #1}
                \accentadjtextback{\textstyle #1}}$}{#1}}
        {\rlap{$\accentadjscript{\scriptstyle #1}
                \accentadjscript{\vrule height1.593pt}
                \overline{\phantom{\scriptstyle #1}
                \accentadjscriptback{\scriptstyle #1}}$}{#1}}
        {\rlap{$\accentadjscriptscript{\scriptscriptstyle #1}
                \accentadjscriptscript{\vrule height1.593pt}
                \overline{\phantom{\scriptscriptstyle #1}
                \accentadjscriptscriptback{\scriptscriptstyle #1}}$}{#1}}}}
\newcommand{\al}{\alpha}
\newcommand{\alp}{\alpha}
\newcommand{\ga}{\gamma}
\newcommand{\de}{\delta}
\newcommand{\eps}{\varepsilon}
\newcommand{\Om}{\Omega}
\newcommand{\clOmm}{{\overline{\Om}\mspace{1mu}}^M}
\newcommand{\bdym}{{\bdy_M}}
\newcommand{\p}{{$p\mspace{1mu}$}}
\newcommand{\R}{\mathbf{R}}
\newcommand{\Sphere}{\mathbf{S}}
\newcommand{\Sp}{\mathbf{S}}
\newcommand{\Gt}{\widetilde{G}}
\newcommand{\Et}{\widetilde{E}}
\newcommand{\G}{{\cal G}}
\newcommand{\K}{\mathcal{K}}%
\newcommand{\Kt}{\widetilde{\mathcal{K}}}%
\newcommand{\At}{\tilde{A}}
\newcommand{\Gj}[2]{G_{#1}(#2)}
\newcommand{\Gjbig}[2]{G_{#1}\bigl(#2\bigr)}
\newcommand{\Gjx}[3]{G_{#1}(#2,#3)}
\newcommand{\Gjr}{\Gj{j}{r}}
\newcommand{\Gjrxo}{\Gjx{j}{r}{x_0}}
\newcommand{\Gkr}{\Gj{k}{r}}
\newcommand{\Gkone}{\Gj{k}{1}}
\newcommand{\Gjs}{\Gj{j}{s}}
\newcommand{\Gones}{\Gj{1}{s}}
\newcommand{\Hh}{H}
\newcommand{\Hr}[1]{H{(#1)}}
\newcommand{\Hrx}[2]{H{(#1,#2)}}
\newcommand{\Hs}{\Hr{s}}
\newcommand{\din}{d_{\inner}}
\newcommand{\doverto}{\overset{d\,\,}\to}
\newcommand{\dMto}{\overset{d_M\,}\longrightarrow}
\newcommand{\dM}{d_M}
\newcommand{\setcurrentlabel}[1]{\def\@currentlabel{#1}}
\newcounter{saveenumi}
\numberwithin{equation}{section}
\newcommand{\eqv}{\mathchoice{\quad \Longleftrightarrow \quad}{\Leftrightarrow}
                {\Leftrightarrow}{\Leftrightarrow}}
\newcommand{\imp}{\mathchoice{\quad \Longrightarrow \quad}{\Rightarrow}
                {\Rightarrow}{\Rightarrow}}
\newcommand{\smalleqv}{\mathchoice{\ \Longleftrightarrow \ }{\Leftrightarrow}
                {\Leftrightarrow}{\Leftrightarrow}}
\newcommand{\smallimp}{\mathchoice{\ \Longrightarrow \ }{\Rightarrow}
                {\Rightarrow}{\Rightarrow}}
\newenvironment{ack}{\medskip{\it Acknowledgement.}}{}
\begin{document}

\authortitle{Anders Bj\"orn, Jana Bj\"orn
    and Nageswari Shanmugalingam}
{The Mazurkiewicz distance and sets that are finitely connected at the boundary}
\author{
Anders Bj\"orn \\
\it\small Department of Mathematics, Link\"opings universitet, \\
\it\small SE-581 83 Link\"oping, Sweden\/{\rm ;}
\it \small anders.bjorn@liu.se
\\
\\
Jana Bj\"orn \\
\it\small Department of Mathematics, Link\"opings universitet, \\
\it\small SE-581 83 Link\"oping, Sweden\/{\rm ;}
\it \small jana.bjorn@liu.se
\\
\\
Nageswari Shanmugalingam
\\
\it \small  Department of Mathematical Sciences, University of Cincinnati, \\
\it \small  P.O.\ Box 210025, Cincinnati, OH 45221-0025, U.S.A.\/{\rm ;}
\it \small  shanmun@uc.edu
}

\date{}
\maketitle

\noindent{\small
{\bf Abstract}.
We study 
local connectedness, local accessibility and finite
connectedness at the boundary,
in relation to the compactness of the Mazurkiewicz completion
of a bounded domain in a metric
space.
For countably connected planar domains we obtain a complete
characterization.
It is also shown exactly which parts of this characterization
fail in higher dimensions and in metric spaces.
}

\bigskip

\noindent
{\small \emph{Key words and phrases}:
compactness, 
countably connected planar domain,
finitely connected at the boundary,
locally accessible, locally connected,
Mazurkiewicz boundary, metric space.
}

\medskip

\noindent
{\small Mathematics Subject Classification (2010):
54D05 (Primary), 54E35, 30L99, 30D40 (Secondary).
}

\section{Introduction}

In this paper we study the question of compactness of the completion
of a domain $\Om$ with respect to the \emph{Mazurkiewicz
distance} 
\[
     \dM(x,y) =\inf \diam E,
\]
where the infimum is taken over all connected sets $E \subset \Om$
containing $x,y \in \Om$. The distance is also called \emph{relative distance},
\emph{Mazurkiewicz intrinsic metric}  and \emph{inner diameter
  distance} in the literature.
We characterize the compactness of this completion
using topological properties of the domain
near the boundary, such as finite connectivity at the boundary, local
accessibility and local connectivity of the boundary.
Even in the Euclidean setting these
topological notions are delicate, and some characterizations hold only
in the two-dimensional setting.

This study is 
part of a project which deals with 
extending the notion
of boundary in connection with the Dirichlet problem for partial
differential operators and
variational problems,
see Adamowicz--Bj\"orn--Bj\"orn--Shanmugalingam~\cite{ABBSprime} 
and Bj\"orn--Bj\"orn--Shanmugalingam~\cite{BBSdir}.
The Euclidean
boundary is usually sufficient for smooth enough
domains, but for more general domains a different notion of boundary
is needed.
For example, even in the simple case of a planar slit disc,
 points on the slit can be approached from two different directions, and one should permit the 
 Dirichlet solution to take two different boundary
values corresponding to  these different approaches to that point.
The Mazurkiewicz boundary considered in this paper partially rectifies
this issue, but does not solve the problem completely.
More concretely, in
\cite{BBSdir}
the Perron method for solving the Dirichlet problem for \p-harmonic
functions was developed with respect to the 
Mazurkiewicz boundary, under the assumption that it is compact. 
(For the corresponding problem
with respect to the Euclidean boundary (on $\R^n$) or given metric boundary
(on metric spaces),
see \cite{GranLindMar}, \cite{Kilp89}, \cite{HeKiMa},
\cite{BBS}, \cite{BBS2} and \cite{BBbook}, as well as the references 
in the notes to \cite[Sections~9 and~16]{HeKiMa}.)
Therefore the compactness of the Mazurkiewicz boundary is of
particular interest and importance to us.

In the general setting of proper locally pathconnected
metric spaces we prove the following result.
See Section~\ref{sect-lc} for the definitions of the involved notions.

\begin{thm} \label{thm-clOmm-cpt}
The closure\/ $\clOmm$ with respect to the  Mazurkiewicz distance 
is compact if and only if\/
$\Om$ is finitely connected at the boundary.
\end{thm}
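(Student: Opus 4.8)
The plan is to convert the compactness statement into a total-boundedness statement and then read off both implications from the relationship between the Mazurkiewicz distance and ordinary connectivity inside small balls. Since $\clOmm$ is complete (being a completion of $(\Om,\dM)$), and since $\Om$ is dense in it, $\clOmm$ is compact if and only if $(\Om,\dM)$ is totally bounded. I will use repeatedly the elementary observation that whenever $E\subset\Om$ is connected and $x,y\in E$, the set $E$ itself is admissible in the infimum defining $\dM$, so $\dM(x,y)\le\diam E$; in particular every connected subset of $\Om$ has $\dM$-diameter no larger than its metric diameter. The converse half of this correspondence is also needed: a connected witness $E$ for a small value of $\dM(x,y)$ is a set of small metric diameter, and if it starts near a point then it stays near that point.

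For the implication that finite connectivity at the boundary forces total boundedness, fix $\eps>0$. As $X$ is proper and $\Om$ bounded, $\overline\Om$ is compact. Around each interior point $y\in\Om$ local pathconnectedness supplies a connected open set $U_y$ with $y\in U_y\subset\Om$ and $\diam U_y<\eps$, while around each $x_0\in\bdy\Om$ finite connectivity supplies a neighbourhood of $x_0$ inside $B(x_0,\eps/2)$ whose intersection with $\Om$ splits into finitely many components, each of metric diameter at most $\eps$. Extracting a finite subcover of $\overline\Om$ from these neighbourhoods, the associated pieces (one $U_y$ per interior point chosen, finitely many components per boundary point chosen) are finite in number, cover $\Om$, and by the observation above each has $\dM$-diameter at most $\eps$. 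Choosing one point in each piece produces a finite $\eps$-net, so $(\Om,\dM)$ is totally bounded and $\clOmm$ is compact.

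For the converse I prove finite connectivity at an arbitrary $x_0\in\bdy\Om$ from total boundedness. Fix $r>0$ and cover $\Om$ by finitely many sets $S_1,\dots,S_N$ each of $\dM$-diameter less than $r/2$. The key point is that a single $S_i$ cannot straddle two components of $B(x_0,r)\cap\Om$ as seen from the smaller ball: if $x,x'\in S_i\cap B(x_0,r/4)$ then $\dM(x,x')<r/2$ yields a connected $E\subset\Om$ with $x,x'\in E$ and $\diam E<r/2$, whence $E\subset B(x,r/2)\subset B(x_0,3r/4)\subset B(x_0,r)$, so $x$ and $x'$ lie in the same component of $B(x_0,r)\cap\Om$. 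Thus each $S_i\cap B(x_0,r/4)$ meets only one component of $B(x_0,r)\cap\Om$, and since the $S_i$ cover $\Om$, at most $N$ components of $B(x_0,r)\cap\Om$ reach into $B(x_0,r/4)$. As $r>0$ and $x_0$ were arbitrary, this is finite connectivity at the boundary.

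The step I expect to be the main obstacle is reconciling the two arguments with the precise definition in Section~\ref{sect-lc}: the converse implication naturally delivers the ball-formulation (finitely many components of $B(x_0,r)\cap\Om$ meeting a concentric smaller ball), whereas the forward implication is most cleanly run from the open-neighbourhood formulation (an open $G\ni x_0$ of small diameter with $G\cap\Om$ having finitely many components). One direction of the equivalence of these two formulations is routine, but the other is genuinely delicate, since many components of the small ball may sit inside one component of the large ball; I would therefore invoke the corresponding equivalence lemma from Section~\ref{sect-lc} rather than reprove it. A minor subsidiary point is the fraction $\tfrac14$ in the second argument, where any fixed ratio strictly below $\tfrac12$ works and is exactly what guarantees that the $\dM$-small witness $E$ cannot escape $B(x_0,r)$.
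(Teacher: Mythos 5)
Your proof is correct, but it takes a genuinely different route from the paper's. The paper argues via \emph{sequential} compactness: in the forward direction it extracts a $d$-cluster point $x_0\in\overline{\Om}$ of a given sequence (using properness), then uses finite connectivity to trap infinitely many terms in components $\Gj{k_m}{2^{-m}}$ of $B(x_0,2^{-m})\cap\Om$ and diagonalizes to obtain a $\dM$-Cauchy subsequence; in the converse direction it argues by contraposition, using Proposition~\ref{prop1-fin} to produce, at a badly behaved boundary point, a sequence whose terms lie in distinct components of $B(x_0,r)\cap\Om$ and are therefore pairwise $\dM$-separated by $r/2$. You instead reduce both directions to \emph{total boundedness} of $(\Om,\dM)$: the forward direction becomes a finite-subcover argument on the compact set $\overline{\Om}$ (small connected neighbourhoods at interior points, finitely many small components of $G\cap\Om$ at boundary points), and the converse becomes a pigeonhole count on a finite cover by $\dM$-small sets. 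Both proofs ultimately rest on the same two geometric facts --- a connected set is an admissible competitor for $\dM$, and a connected set of diameter less than $r/2$ meeting $B(x_0,r/4)$ cannot leave $B(x_0,r)$ --- but your version is more symmetric and dispenses with the subsequence bookkeeping, whereas the paper's sequential construction has the side benefit of explicitly exhibiting the Cauchy sequences attached to components, a device reused in the proof of Proposition~\ref{prop-N-conn}.

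One step should be tightened, namely the one you flagged at the end. The ``equivalence lemma'' you wish to invoke is Proposition~\ref{prop1-fin}, but its hypotheses are not literally what your converse delivers, so a short bridge is still required: assuming that for every $r>0$ only finitely many components of $B(x_0,r)\cap\Om$ meet $B(x_0,r/4)$, one gets (i) $N(r)<\infty$, since every component with $x_0$ in its closure contains points of $B(x_0,r/4)$; and (ii) $x_0\notin\itoverline{\Hr{r}}$, since a sequence in $\Hr{r}$ converging to $x_0$ eventually lies in $B(x_0,r/4)$, so by pigeonhole infinitely many of its terms lie in a single component, which then has $x_0$ in its closure --- contradicting that it is a component of $\Hr{r}$. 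With (i) and (ii), Proposition~\ref{prop1-fin} yields finite connectivity at $x_0$. So the reconciliation you feared was ``genuinely delicate'' is in fact a two-line pigeonhole argument, and with it your proof is complete.
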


Finite connectivity of $\Om$ at a boundary point $x_0$ is also characterized in terms of components of $\Om\cap B(x_0,r)$, see Proposition~\ref{prop1-fin}.
Proposition~\ref{prop-N-conn} provides a connection between finite
connectivity of $\Om$ at $x_0$ and the number of points in the Mazurkiewicz boundary  
$\bdry_M\Om$ corresponding to $x_0$.

For countably connected planar domains we prove the following
  theorem, which generalizes a result by Newman~\cite{newman}, and follows from the more general Theorem~\ref{thm-newman-R2}.

\begin{thm} 
If\/ $\Om \subset \R^2$ and\/ $\R^2\setm\Om$ has countably many components, then
the following are equivalent\/\textup{:}
\begin{enumerate}
\item 
each  $x_0 \in \bdy \Om$ is locally accessible from\/ $\Om$\/\textup{;}
\item 
for every $x_0 \in \bdy \Om$ we have that\/
$\Om \cup \{x_0\}$ is locally connected at $x_0$\/\textup{;}
\item \label{H-item}
for all $r>0$ and $x_0 \in \bdy\Om$ it is true that $x_0 \notin \itoverline{\Hrx{r}{x_0}}$\textup{;}
\item 
$\Om$ is finitely connected at the boundary\/\textup{;}
\item 
$\clOmm$ is compact.
\end{enumerate}
\end{thm}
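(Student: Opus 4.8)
The plan is to treat (d) $\Leftrightarrow$ (e) as already given by Theorem~\ref{thm-clOmm-cpt}, which holds in the general metric setting, and to prove the four remaining, genuinely planar conditions equivalent through the cycle (a) $\Rightarrow$ (b) $\Rightarrow$ (d) $\Rightarrow$ (a), reading (c) as a reformulation of (d) that follows by combining the component characterisation of Proposition~\ref{prop1-fin} with the definition of $\Hrx{r}{x_0}$ from Section~\ref{sect-lc}: finite connectedness of $\Om$ at $x_0$ is detected by the components $\Gjrxo$ of $\Om\cap B(x_0,r)$, and $x_0\notin\itoverline{\Hrx{r}{x_0}}$ for all $r$ says precisely that near $x_0$ only the finitely many components whose closures contain $x_0$ persist. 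Two features of the planar setting will be used throughout: an open subset of $\R^2$ is locally pathconnected, and $\R^2\setm\Om$ has only countably many components, so the complementary continua that accumulate at a boundary point can be enumerated and kept track of.

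First I would establish (a) $\Rightarrow$ (b). Given $\varepsilon>0$, choose the accessibility radius $\delta$; every $y\in\Om\cap B(x_0,\delta)$ is joined to $x_0$ by a curve inside $B(x_0,\varepsilon)\cap(\Om\cup\{x_0\})$, so the whole set $B(x_0,\delta)\cap(\Om\cup\{x_0\})$ lies in the component of $x_0$ in $B(x_0,\varepsilon)\cap(\Om\cup\{x_0\})$, which is therefore a neighbourhood of $x_0$; this is (b). Next I would prove (b) $\Rightarrow$ (d) by contraposition: if $\Om$ is not finitely connected at $x_0$ then, by the component description in Proposition~\ref{prop1-fin}, infinitely many components of $\Om\cap B(x_0,r)$ cluster at $x_0$, and the complementary continua separating them, enumerated using the countability of $\R^2\setm\Om$, keep the component of $x_0$ in $B(x_0,\rho)\cap(\Om\cup\{x_0\})$ from being a neighbourhood for small $\rho$, so (b) fails.

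The heart of the matter, and the step I expect to be the main obstacle, is the closing implication (d) $\Rightarrow$ (a): manufacturing, from the mere finiteness of the components near $x_0$, honest curves that reach $x_0$. This is the Newman-type, genuinely planar construction generalising Carath\'eodory's principle that a locally connected boundary admits a continuous parametrisation. Fixing a component $G$ of $\Om\cap B(x_0,r)$ with $x_0\in\itoverline{G}$ and a point $y\in G$ near $x_0$, I would descend through the scales, at each of which only finitely many components survive, and use the Jordan curve theorem together with the enumeration of the complementary continua meeting $x_0$ to verify that none of them separates $y$ from $x_0$ inside $G$; a limiting argument then yields a curve from $y$ to $x_0$ within $B(x_0,\varepsilon)$, which is accessibility.

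It is exactly here that both the planarity and the countability of $\R^2\setm\Om$ are indispensable, and these are the hypotheses whose failure, as the abstract signals, breaks the characterisation in higher dimensions and in metric spaces. Finally, (c) $\Leftrightarrow$ (d) is the reformulation coming from Proposition~\ref{prop1-fin} and the definition of $\Hrx{r}{x_0}$, while (d) $\Leftrightarrow$ (e) is Theorem~\ref{thm-clOmm-cpt}, so the cycle (a) $\Rightarrow$ (b) $\Rightarrow$ (d) $\Rightarrow$ (a) closes all five conditions into a single equivalence.
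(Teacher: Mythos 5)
Your proposal inverts where the difficulty lies, and the one step carrying the real content is exactly where your argument breaks. First, (c) $\Leftrightarrow$ (d) is not a reformulation: Proposition~\ref{prop1-fin} characterizes finite connectedness at $x_0$ by the conjunction of \emph{two} conditions, $N(r)<\infty$ \emph{and} $x_0\notin\itoverline{\Hr{r}}$, so (d) $\Rightarrow$ (c) is immediate, but the converse is precisely the nontrivial part, since (c) says nothing about $N(r)$ being finite. Indeed, the pointwise version of (c) $\Rightarrow$ (d) is false: in Example~\ref{ex1} (the fan $(0,2)^2\setm\bigcup_{j=1}^\infty\{re^{i/j}:0\le r\le 1\}$), every component of $B(0,r)\cap\Om$ has $0$ in its closure, so $\Hr{r}=\emptyset$ and (c$'$) holds at $0$, yet $N(r)=\infty$. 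The same example kills your contrapositive proof of (b) $\Rightarrow$ (d): there $\Om\cup\{0\}$ \emph{is} locally connected at $0$ (all the open sectors have $0$ in their closures, so $B(0,\rho)\cap(\Om\cup\{0\})$ is itself connected, by Lemma~\ref{lem-connected}, and relatively open), even though infinitely many components of $\Om\cap B(0,r)$ cluster at $0$. So infinitely many components clustering at $x_0$ does \emph{not} prevent the component of $x_0$ from being a neighbourhood, and no argument that only uses the hypothesis at $x_0$ itself can possibly work: since (a), (b), (c) are pointwise equivalent (Theorem~\ref{thm-newman-pt}), any correct proof of (b) $\Rightarrow$ (d) must exploit local connectedness/accessibility at \emph{all} boundary points --- in Example~\ref{ex1} the global statement survives only because accessibility fails at points on the rays.

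Second, the direction you call the heart of the matter, (d) $\Rightarrow$ (a), is in fact the easy one and needs neither planarity, nor countability, nor the Jordan curve theorem: it holds in every proper locally connected metric space via (d) $\Rightarrow$ (c) (Proposition~\ref{prop1-fin}), then (c$'$) $\Rightarrow$ local connectedness of $\Om\cup\{x_0\}$ at $x_0$ (take $G=\bigcup_{j}\Gjs\cup\{x_0\}$), then Whyburn's accessibility theorem; this is the content of Theorem~\ref{thm-newman-gen}. The genuinely planar and genuinely countable work is all concentrated in (a) $\Rightarrow$ (d), which the paper proves as Theorem~\ref{thm-newman-count-singleton}: one first passes to the simply connected domain $\Om'=\Sp^2\setm K$, where $K$ is the complementary component containing $x_0$, applies Newman's theorem to get finite connectedness of $\Om'$ at the boundary, and then must show that each relevant component of $\Om'\cap B_0$ contains exactly one component of $\Om\cap B_0$ clustering at $x_0$. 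That last step requires a separation theorem for countable unions of pairwise disjoint compacta (Theorem~\ref{thm-comp-separ-122}), whose proof uses upper semicontinuous collections, Janiszewski's theorem (Theorem~\ref{thm-janiszewiski}), an uncountability argument for compact perfect Hausdorff spaces, and dimension theory for the totally disconnected residual set. None of this machinery, nor any substitute for it, appears in your sketch, so your cycle does not close.
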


For finitely connected planar domains we further show in
Theorem~\ref{thm-newman}  that (a)--(e) are
equivalent to each of
\[ \text{(f) } \bdy \Om \text{ is locally connected} 
\quad \text{and} \quad \text{(g) } 
\R^2 \setm \Om \text{ is locally connected},
\]
while for infinitely connected planar domains both (f) and (g) 
always fail,
see Theorem~\ref{thm-newman-R2}.
For a general bounded domain in a proper locally connected metric
space we only obtain the following implications:
\[
\text{(e)}\  \Longleftrightarrow \ \text{(d)} \ \Longrightarrow \ 
\text{(c)} \ \Longleftrightarrow \ \text{(b)} \ \Longleftrightarrow \ 
\text{(a)} \quad \text{and} \quad
\text{(f)} \ \Longrightarrow \ \text{(g)},
\]
see Theorem~\ref{thm-newman-gen},
We also construct Examples~\ref{ex-R3-finconn-compl-new}--\ref{ex-Jana}
showing that no other implication holds true 
even when $\Om \subset \R^3$ is required to be 
homeomorphic to a ball.

In
  Adamowicz--Bj\"orn--Bj\"orn--Shanmugalingam~\cite[Theorem~9.6]{ABBSprime} 
it was shown that the Mazurkiewicz boundary coincides with the
collection of singleton prime ends of the domain. 
Our results therefore 
also have applications in connection with  prime ends.
In particular, 
in Section~7 of~\cite{ABBSprime}, singleton prime ends
are related to (nonlocal) accessibility of boundary points,
while in Section~10 of~\cite{ABBSprime} compactness of the
  singleton prime end boundary is discussed.
(Beware that the prime ends introduced in~\cite{ABBSprime} do not
always coincide with the classical Carath\'eodory prime ends for
simply connected planar domains.)

As to be expected, the discussion in this paper deals with purely metric 
 topological notions, and hence might be of interest to a wider
 audience. 
Some parts of this paper are related to the results of
Rempe~\cite{rempe}, which characterizes local connectivity at the
boundary in terms of prime ends for simply connected planar domains, and to
R.~L.~Moore~\cite{moore62}. 
However, the reader should beware that the definitions of compactness,
connectedness and continuum in~\cite{moore62} are different from the
modern definitions.

The paper is organized as follows.
In Section~\ref{sect-lc} we describe the topological notions used
throughout the paper, while in Section~\ref{sect-dM} we define and
discuss the Mazurkiewicz distance, and in particular prove
Theorem~\ref{thm-clOmm-cpt}.
In Section~\ref{sect-loc-conn-bdry} we describe relations between the
finite connectivity at the boundary and other topological notions.

In the last section we revisit planar domains and show that a countably
connected planar domain is finitely connected at the boundary
whenever each topological boundary point is locally accessible from
it (Theorem~\ref{thm-newman-count-singleton}), generalizing a result 
by Newman~\cite{newman}.

\begin{ack}
This research was begun while the first two authors visited
the University of Cincinnati during the first half year of 2010,
and continued while the third author visited Link\"opings universitet
in March 2011, and during the stay of the three authors at Institut
Mittag--Leffler in Autumn 2013. 
We wish to thank these institutions for their kind hospitality.
We also wish to thank Tomasz Adamowicz and Harold Bell for fruitful discussions.

The first two authors were supported by the Swedish Research Council.
The first author was also a Fulbright scholar during his
visit to the University of Cincinnati, supported by the Swedish
Fulbright Commission,
while the second author was a Visiting Taft Fellow
during her visit to the University of Cincinnati, 
supported by the Charles Phelps Taft Research Center at the University
of Cincinnati.
The third author was also supported by the Taft Research Center 
of the University of Cincinnati and by 
grant~\#200474 from the Simons Foundation and NSF grant DMS-1200915.
\end{ack}

\section{Local connectedness}  
\label{sect-lc}

\emph{Throughout the paper we let $(X,d)$ be a metric space and let
$\Om \subset X$ be  a bounded domain,
i.e.\  $\Om$ is a bounded nonempty open connected set.}

\medskip

In this section we describe various notions of connectedness which are useful 
in the study of topological properties of the Mazurkiewicz boundary.
We also characterize finite and bounded connectivity of $\Om$ at a boundary 
point $x_0$ in terms of components of $B(x_0,r)\cap\Om$.

There are four distinct types of local connectedness one
can consider at a point in a metric space.

\begin{deff} \label{deff-lc}
A topological space $Y$ is \emph{locally\/ \textup{(}path\/\textup{)}connected} 
at $x\in Y$
if for every $r>0$ there is a (path)connected neighbourhood
$G \subset B(x,r)$ of $x$.

The space $Y$ is \emph{\textup{(}path\/\textup{)}connected im kleinen} at $x\in Y$
if for every $r>0$ there is a (path)connected set
$A \subset B(x,r)$ such that $x \in \interior A$.

We also say that $Y$ has any of these properties \emph{globally} if it holds
for all $x \in Y$.
\end{deff}

Here $B(x,r)=\{y \in X : d(x,y)<r\}$ and
neighbourhoods are open.

The terminology is not standardized throughout the literature.
Indeed, Kuratowski's definition in \cite{kuratowski2} of
local (arcwise) connectedness corresponds to the above definition of
(path)connectedness im kleinen.
Whyburn's definition in \cite{whyburn42} of
local  connectedness is our definition of
connectedness im kleinen. We follow the terminology of
Munkres~\cite{munkres2} in defining
local connectedness. However, \cite{munkres2} uses the 
name weakly locally connected instead of 
connected im kleinen.

It is obvious that we have the following implications:
\begin{equation} \label{eq-lc-trivial}
\begin{split}
\xymatrix{
   \text{ locally pathconnected at $x$ } \ar@2[d] \ar@{=>}[r]
  &   \text{ locally connected at $x$ } \ar@2[d] \\
   \text{ pathconnected im kleinen at $x$} \ar@{=>}[r]
  &   \text{ connected im kleinen  at $x$. }
}
\end{split}
\end{equation}

For the corresponding global statements 
(i.e.\ the assumptions are required at all $x \in X$)
it is known that
also the corresponding upward implications hold.
Indeed, if $X$ is (path)connected im kleinen at every $x \in X$ and $r>0$,
then the (path)connected component of $B(x,r)$ containing $x$ must be open,
see e.g.\ Whyburn~\cite[p.~20, (14.1)]{whyburn42},
and hence $X$ is locally (path)connected at $x$.

If $X$ is moreover complete, then all four global statements are equivalent
by Mazurkiewicz--Moore--Menger's theorem, see
Kuratowski~\cite[p.~154, Theorem~1]{kuratowski2}.
On the other hand, Moore~\cite{moore26}
gave an example of a
locally connected metric space which is not locally pathconnected.
In fact his example is a
 noncomplete subset of $\R^2$ which is connected
and locally connected but contains no path.

We are mostly interested in considering  connectedness properties 
at points on the boundary of $\Omega$.
Fix a boundary point $x_0 \in \bdy \Om$ throughout the rest of this section.

\begin{deff}\label{deff-finite-conn}
We say that $\Om$ is \emph{finitely connected} at $x_0\in \bdy \Om$ if
for every $r>0$ there is an open set $G\subset X$ 
such that $x_0 \in G \subset B(x_0,r)$
and $G \cap \Om$ has only finitely many components.

If there is $N>0$ such that for every $r>0$
there is an open $G\subset X$ 
such that $x_0 \in G \subset B(x_0,r)$
and such that $G \cap \Om$ has at most $N$ components,
then we say that $\Om$ is
\emph{boundedly connected} at $x_0$.
If moreover $N$ is minimal, then $\Om$ is said to be
\emph{$N$-connected} at $x_0$. 
Furthermore,
$\Om$ is \emph{locally connected} at $x_0 \in \bdy \Om$
if it is $1$-connected at $x_0$.

We say that $\Om$ has one of these properties 
\emph{at the boundary} if it has that property  at all boundary points.
\end{deff}

The terminology above follows N\"akki~\cite{nakki70}
who seems to have first used this terminology in print.
(N\"akki~\cite{nakki-private} has informed us that
he learned about the terminology from V\"ais\"al\"a, who
however first seems to have used it in print in~\cite{vaisala}.)
The concept of finite connectedness at the boundary was already
used by Newman~\cite{newman} (only in the first edition), but without a name.

Beware that the notion of finitely connected domains is a 
completely different notion (also used later in this paper). 
Similarly, it is important to distinquish between ``locally connected'', 
``locally connected at a point'' (as in Definition~\ref{deff-lc}), 
``locally connected at a boundary point'' and ``locally connected at the 
boundary'' (as in Definition~\ref{deff-finite-conn}).
For example, note that for $\Om$ to be locally connected at $x_0\in\bdry\Om$
(in the sense of Definition~\ref{deff-finite-conn}) it is necessary that  
$\Om\cup\{x_0\}$ is locally connected at $x_0$ 
(in the sense of Definition~\ref{deff-lc}). The converse
need not hold, as evidenced by the slit disc 
$\Om=\{(x,y)\in\R^2\, :\, x^2+y^2<1\}\setm ([0,1)\times\{0\})$, 
with $x_0=\bigl(\tfrac{1}{2},0\bigr)\in\R^2$. 
See Theorem~\ref{thm-newman-pt} for the relations between these
various notions of connectedness.

Theorems~1.10 and~1.11 of \cite{nakki70}
give several characterizations
of finite connectedness and $N$-connectedness.
We provide some further characterizations in
this section, which 
will be useful for us in the study of 
the Mazurkiewicz boundary.
The results of~\cite{nakki70} are formulated for $\overline{\R^n}$, but the proofs
of these topological properties given there also hold in metric spaces.

\begin{example}
The following was shown in 
Adamowicz--Bj\"orn--Bj\"orn--Shan\-mu\-ga\-lin\-gam~\cite{ABBSprime}.
(See \cite{ABBSprime} for definitions of the concepts mentioned above and below.)
The following are true:
\begin{enumerate}
\item
If $\Om$ is a uniform domain, then 
$\Om$ is locally connected at the boundary. 
\item
If $\Om$ is finitely connected at the boundary
and $X$ is locally connected and proper, then
there is a natural one-to-one correspondence between the points
in the
Mazurkiewicz boundary (see Definition~\ref{Mazur-cl})
and the prime end boundary using the new definition
of prime ends introduced in \cite{ABBSprime}. 
\end{enumerate}

If $X$ is a quasiconvex proper metric space equipped with a doubling
measure $\mu$, then the following are true:
\begin{enumerate}
\setcounter{enumi}{2}
\item
If $\Om$ is a John domain, then 
$\Om$ is boundedly connected at the boundary. 
\item
If $\Om$ is an almost John domain, then 
$\Om$ is finitely connected at the boundary.
\end{enumerate}
\end{example}

For each $r>0$ let $\{\Gjrxo\}_{j=1}^{N(r,x_0)}$ be the components
of $B(x_0,r) \cap \Om$ that
have $x_0$ in their boundary,
i.e.\ $x_0 \in \itoverline{\Gjrxo}$. 
Here $N(r,x_0)$ is either a nonnegative integer or $\infty$, with $N(r,x_0)=0$ indicating that
there is no connected component of $B(x_0,r)\cap\Om$ that has $x_0$ in its boundary. Further, let
\[
    \Hrx{r}{x_0}=B(x_0,r) \cap \Om \setm
     \bigcup_{j=1}^{N(r,x_0)} \Gjrxo
\]
be the union of the remaining components (if any).
We will often drop $x_0$ from the notation when it is
clear from the context which point is under consideration.

\begin{lem} \label{lem-dec}
If $x_0 \notin \itoverline{\Hr{r}}$ for every $r>0$,
then $N(\,\cdot\,)$ is a nonincreasing function.
\end{lem}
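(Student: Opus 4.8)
The plan is to fix two radii $0<s<r$ and to produce an injection from the family of components of $B(x_0,r)\cap\Om$ having $x_0$ on their boundary into the corresponding family for the radius $s$. Such an injection gives $N(s)\ge N(r)$, which is precisely the assertion that $N(\,\cdot\,)$ is nonincreasing. (The case $N(r)=\infty$ needs no separate treatment, since an injection out of an infinite set forces $N(s)=\infty$ as well.)

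First I would record the elementary topological fact that, for any component $G$ of $B(x_0,r)\cap\Om$, the components of the open set $G\cap B(x_0,s)$ coincide with those components of $B(x_0,s)\cap\Om$ that happen to lie inside $G$. Indeed, if $C$ is a component of $G\cap B(x_0,s)$, then $C$ is connected and contained in $B(x_0,s)\cap\Om$, and the component $C'$ of $B(x_0,s)\cap\Om$ containing it is connected and meets $G$, hence lies in the single $r$-component $G$ as well as in $B(x_0,s)$; thus $C'\subset G\cap B(x_0,s)$, and maximality of $C$ gives $C'=C$. Consequently the $s$-components are distributed among the $r$-components without any splitting across distinct $r$-components.

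The heart of the argument, and the only place where the hypothesis enters, is to show that every $r$-component $G$ with $x_0\in\itoverline{G}$ contains at least one $s$-component whose closure also contains $x_0$. Suppose this failed for some such $G$. Then, by the previous paragraph, no component of $G\cap B(x_0,s)$ has $x_0$ on its boundary, so by the definition of $\Hs$ each of these components is one of the components making up $\Hs$, whence $G\cap B(x_0,s)\subset\Hs$. On the other hand, $x_0\in\itoverline{G}$ yields points of $G$ arbitrarily close to $x_0$, and those at distance $<s$ lie in $G\cap B(x_0,s)$; therefore $x_0\in\itoverline{G\cap B(x_0,s)}\subset\itoverline{\Hs}$, contradicting the standing assumption $x_0\notin\itoverline{\Hs}$. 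Hence such an $s$-component exists, and selecting one for each $G$ defines a map $\phi$ from the $r$-components with $x_0$ on their boundary to the $s$-components with $x_0$ on their boundary, with $\phi(G)\subset G$.

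Finally, distinct $r$-components are disjoint, so their images under $\phi$, being contained in them, are distinct; thus $\phi$ is injective and $N(r)\le N(s)$, as desired. I expect the two routine points --- the component-matching observation of the second paragraph and the verification that $x_0\in\itoverline{G\cap B(x_0,s)}$ --- to be the only spots requiring any care, with the genuine obstacle being the exclusion of the bad case through the hypothesis $x_0\notin\itoverline{\Hs}$: without it, the discarded components could accumulate at $x_0$ inside $\Hs$, and $N(\,\cdot\,)$ would in general fail to be monotone.
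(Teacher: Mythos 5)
Your proof is correct and follows essentially the same route as the paper's: the paper likewise fixes two radii, uses the hypothesis $x_0\notin\itoverline{\Hr{\cdot}}$ at the smaller radius to show that each large-radius component $\Gjs$ with $x_0$ in its closure must contain a small-radius component with the same property, and then uses pairwise disjointness of the $\Gjs$ to conclude that this assignment is injective. The only differences are notational (the paper takes $r<s$ where you take $s<r$) and that you spell out the component-matching observation which the paper leaves implicit.
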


Observe that if $0<r<s$ and $x_0 \notin \itoverline{\Hr{r}}$,
then $x_0 \notin \itoverline{\Hr{s}}$.

\begin{proof}
Let $0<r<s$. As $x_0 \notin \Hs$ there is at least one component $\Gones$.
Fix any component $\Gjs$. As $x_0 \in \itoverline{\Gjs}$, we have
that $\Gjs \cap B(x_0,r)$ is nonempty and thus consists of one or
more components. Not all of these components can lie in $\Hr{r}$ as
$x_0 \notin \itoverline{\Hr{r}}$. Hence there is $k(j)$ such that
$\Gj{k(j)}{r} \subset \Gjs \cap B(x_0,r)$.
As the $\Gjs$ are pairwise disjoint, $k(\,\cdot\,)$ must be injective,
and thus $N(s) \le N(r)$.
\end{proof}

\begin{prop} \label{prop1-fin}
The set\/ $\Om$ is finitely connected at $x_0$
if and only if for all $r>0$, $N(r)<\infty$ and $x_0 \notin \itoverline{\Hr{r}}$.
\end{prop}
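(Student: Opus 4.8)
The plan is to prove both implications, using throughout the elementary fact (exploited already in the proof of Lemma~\ref{lem-dec}) that every connected subset of $B(x_0,r)\cap\Om$ lies in exactly one of its components, and that by definition the sets $\Gj{j}{r}$, $j=1,\dots,N(r)$, are precisely the components of $B(x_0,r)\cap\Om$ whose closure contains $x_0$, while $\Hr{r}$ is the union of the remaining ones.

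For the forward implication, fix $r>0$ and take, by finite connectivity, an open $G$ with $x_0\in G\subset B(x_0,r)$ such that $G\cap\Om$ has finitely many components $V_1,\dots,V_m$. To bound $N(r)$, note that for each $j$ we have $x_0\in\itoverline{\Gj{j}{r}}$, so the neighbourhood $G$ meets $\Gj{j}{r}$; since each $V_i$ is a connected subset of $B(x_0,r)\cap\Om$, any $V_i$ that meets $\Gj{j}{r}$ must be contained in $\Gj{j}{r}$. Thus each $\Gj{j}{r}$ contains at least one $V_i$, and distinct $\Gj{j}{r}$ receive distinct $V_i$ (a given $V_i$ lies in a unique component), so that $N(r)\le m<\infty$. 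To see that $x_0\notin\itoverline{\Hr{r}}$, suppose instead there is a sequence $y_n\in\Hr{r}$ with $y_n\to x_0$. Eventually $y_n\in G$, so each $y_n$ lies in some $V_{i(n)}$, and since there are only finitely many components, some $V_{i_0}$ contains infinitely many $y_n$, whence $x_0\in\itoverline{V_{i_0}}$. But $V_{i_0}$ lies in a single component $Z$ of $B(x_0,r)\cap\Om$, and as $V_{i_0}$ meets $\Hr{r}$, this $Z$ is one of the components constituting $\Hr{r}$, so $x_0\notin\itoverline{Z}$; this contradicts $x_0\in\itoverline{V_{i_0}}\subset\itoverline{Z}$.

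For the converse, fix $r>0$ and assume $N(r)<\infty$ and $x_0\notin\itoverline{\Hr{r}}$. I would set $G=B(x_0,r)\setm\itoverline{\Hr{r}}$, which is open, is contained in $B(x_0,r)$, and contains $x_0$ because $x_0\notin\itoverline{\Hr{r}}$. Since $B(x_0,r)\cap\Om=\bigcup_{j=1}^{N(r)}\Gj{j}{r}\cup\Hr{r}$ and $\Hr{r}\subset\itoverline{\Hr{r}}$, removing $\itoverline{\Hr{r}}$ gives $G\cap\Om=\bigcup_{j=1}^{N(r)}(\Gj{j}{r}\setm\itoverline{\Hr{r}})$. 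The decisive point is that $\Gj{j}{r}\cap\itoverline{\Hr{r}}=\emptyset$ for each $j$; granting this, $G\cap\Om=\bigcup_{j=1}^{N(r)}\Gj{j}{r}$ has exactly the $N(r)<\infty$ components $\Gj{1}{r},\dots,\Gj{N(r)}{r}$ (distinct components of $B(x_0,r)\cap\Om$ remain separated in the subspace), which is the required finite connectivity at $x_0$.

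The main obstacle is precisely this separation statement $\Gj{j}{r}\cap\itoverline{\Hr{r}}=\emptyset$: a priori, deleting the closed set $\itoverline{\Hr{r}}$ could cut a single component $\Gj{j}{r}$ into several, or even infinitely many, pieces, should some point of $\Gj{j}{r}$ be a limit of the far components making up $\Hr{r}$. I would dispose of this by noting that each $\Gj{j}{r}$ is open whenever $X$ is locally connected (the setting of the paper's main results): then $\Gj{j}{r}$ is a neighbourhood of each of its points that is disjoint from the other components, and in particular from $\Hr{r}$, so no point of $\Gj{j}{r}$ can lie in $\itoverline{\Hr{r}}$. Everything else is bookkeeping with components, so that the forward counting argument and this one separation fact carry the whole proof.
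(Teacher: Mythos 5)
Your proof is correct and is, in substance, the paper's own argument; the one place where it genuinely differs is worth recording. The forward direction is identical: the injection sending each $\Gjr$ to a component of $G\cap\Om$ contained in it bounds $N(r)$ by the (finite) number of components of $G\cap\Om$, and accumulation of $\Hr{r}$ at $x_0$ would force infinitely many components of $G\cap\Om$. For the converse, the paper takes $G=B(x_0,s)\cup\bigcup_{j=1}^{N(r)}\Gjr$ with $s$ chosen so that $B(x_0,s)\cap\Hr{r}=\emptyset$, whereas you take $G=B(x_0,r)\setm\itoverline{\Hr{r}}$; these are two descriptions of essentially the same neighbourhood, and the verification burden simply moves. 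The paper's $G$ visibly satisfies $G\cap\Om=\bigcup_{j}\Gjr$, but its \emph{openness} is exactly the statement that no point of $\bigcup_j\Gjr$ lies in $\itoverline{\Hr{r}}$ --- which is precisely what you call the ``decisive point''; your $G$ is visibly open, and all the work goes into that same separation claim. The noteworthy difference is that you isolate this point and prove it (in a locally connected space components of open sets are open, so each $\Gjr$ is an open set disjoint from $\Hr{r}$, hence disjoint from $\itoverline{\Hr{r}}$), whereas the paper merely asserts that its $G$ ``is a neighbourhood of $x_0$'', an assertion that requires the very same fact. Consequently both your proof and the paper's establish the proposition under local connectedness of $X$ --- the standing hypothesis the paper imposes only from Section~\ref{sect-dM} onwards, which is exactly where the proposition is used --- and neither argument, as written, produces an open $G$ in an arbitrary metric space. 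So there is no gap relative to the paper, and your write-up is in fact the more careful of the two on the one delicate step.
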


As the following example illustrates,
having $0<N(r)<\infty$ for all $r>0$ by itself
does not guarantee finite connectivity.

\begin{example} \label{ex-comb-wide} (The topologist's comb I)
Let $\Om \subset \R^2$ be given by
\[
    \Om :=((-1,1) \times (0,2))
         \setm \bigl(\bigl\{\tfrac{1}{2},\tfrac{1}{3},\tfrac{1}{4},
         \ldots,0\bigr\}    \times (0,1]\bigr)
\]
and $x_0=(0,0)$. Then $N(r)=1$ for all $r>0$,
but $\Om$ is not finitely connected at $x_0$.
Note that $\Om$ is simply connected.
\end{example}

\begin{proof}[Proof of Proposition~\ref{prop1-fin}]
Assume first that $\Om$ is finitely connected at $x_0$,
and let $r>0$.
Then there is  a neighbourhood $G \subset B(x_0,r)$ of $x_0$
such that $G \cap \Om$ has only finitely many components.
It follows that $x_0  \notin \itoverline{\Hr{r}}$,
as otherwise $G \cap \Hr{r}$ would have infinitely many components.
Furthermore,
as $x_0 \in \itoverline{\Gjr}$ we must have that $G \cap \Gjr\ne \emptyset$
for every $j$. Moreover, two different components $\Gjr$ and $\Gkr$ cannot
intersect the same component of $G\cap\Om$, as $G \subset B(x_0,r)$.
Thus $N(r)$ is no larger than the number of components of $G$, which is
finite by assumption.

Conversely, let $r>0$ and assume that $N(r)<\infty$ and
$x_0 \notin \itoverline{\Hr{r}}$.
Then there is $0<s<r$ such that $B(x_0,s) \cap \Hr{r} = \emptyset$.
It follows that $G=B(x_0,s) \cup \bigcup_{j=1}^{N(r)} \Gjr \subset B(x_0,r)$
is a neighbourhood of $x_0$ such that
$G \cap \Om=\bigcup_{j=1}^{N(r)} \Gjr $ has
only $N(r)$ number of components.
\end{proof}

\begin{prop} \label{prop1-N}
The set\/ $\Om$ is $N$-connected at $x_0$
if and only if $x_0 \notin \itoverline{\Hr{r}}$ for all $r>0$
and\/ $\lim_{r \to 0} N(r) =N$. 
Moreover, in this case there is $r_0>0$ such that $N(r_0)=N$.
\end{prop}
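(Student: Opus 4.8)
The plan is to reduce the whole statement to a single counting fact at each fixed scale and then exploit the monotonicity of $N(\cdot)$ supplied by Lemma~\ref{lem-dec}. Fix $r>0$ and suppose, as will always be the case below, that $N(r)<\infty$ and $x_0\notin\itoverline{\Hr{r}}$. I would first record a \emph{counting lemma}: the least number of components of $G\cap\Om$, taken over all open $G$ with $x_0\in G\subset B(x_0,r)$ (call these \emph{admissible}), equals exactly $N(r)$. The inequality ``$\ge N(r)$'' is the first half of the proof of Proposition~\ref{prop1-fin}: since $G\subset B(x_0,r)$, each component of $G\cap\Om$ lies in a single component of $B(x_0,r)\cap\Om$, each $\Gjr$ meets $G$ because $x_0\in\itoverline{\Gjr}$, and distinct $\Gjr$ meet distinct components of $G\cap\Om$, so $G\cap\Om$ has at least $N(r)$ components. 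The inequality ``$\le N(r)$'' is witnessed by the explicit neighbourhood $G=B(x_0,s)\cup\bigcup_{j=1}^{N(r)}\Gjr$ built in the second half of that proof, which has exactly $N(r)$ components once $0<s<r$ is chosen so small that $B(x_0,s)\cap\Hr{r}=\emptyset$.

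Next I would translate bounded connectivity into this language. By definition, the minimal $N$ for which $\Om$ is boundedly connected at $x_0$ is $\sup_{r>0}m(r)$, where $m(r)$ denotes the least number of components of $G\cap\Om$ among admissible $G$; since $m(r)$ is integer valued this supremum, when finite, is the minimal admissible bound. Whenever $\Om$ is boundedly connected at $x_0$ it is in particular finitely connected there, so Proposition~\ref{prop1-fin} gives $N(r)<\infty$ and $x_0\notin\itoverline{\Hr{r}}$ for every $r$; the counting lemma then yields $m(r)=N(r)$. Moreover Lemma~\ref{lem-dec} applies, so $N(\cdot)$ is nonincreasing and hence $\sup_{r>0}N(r)=\lim_{r\to0}N(r)$. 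Combining these, the minimal bound equals $\lim_{r\to0}N(r)$. This proves the forward implication: if $\Om$ is $N$-connected, then $N=\sup_rN(r)=\lim_{r\to0}N(r)$, and $x_0\notin\itoverline{\Hr{r}}$ for all $r$ by Proposition~\ref{prop1-fin}.

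For the converse, assume $x_0\notin\itoverline{\Hr{r}}$ for all $r>0$ and $\lim_{r\to0}N(r)=N<\infty$. Lemma~\ref{lem-dec} makes $N(\cdot)$ nonincreasing, so $N(r)\le\sup_sN(s)=\lim_{s\to0}N(s)=N<\infty$, and Proposition~\ref{prop1-fin} shows $\Om$ is finitely connected at $x_0$. By the counting lemma $m(r)=N(r)\le N$ for every $r$, so $\Om$ is boundedly connected with bound $N$, and its minimal bound is $\sup_rm(r)=\sup_rN(r)=N$; that is, $\Om$ is $N$-connected. Finally, for the ``moreover'' clause, since $N(\cdot)$ is integer valued, nonincreasing and $\sup_rN(r)=N$, there is $r_1$ with $N(r_1)>N-1$, whence $N(r_1)=N$, and $r_0=r_1$ works. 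The only point needing care — the crux of the argument — is the lower bound in the counting lemma, namely that no admissible neighbourhood can merge two of the boundary components $\Gjr$; this is precisely where the constraint $G\subset B(x_0,r)$ is used, and I would import it verbatim from the proof of Proposition~\ref{prop1-fin}.
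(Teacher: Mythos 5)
Your proof is correct and takes essentially the same route as the paper's: both directions reduce to the two halves of the proof of Proposition~\ref{prop1-fin} (any admissible $G$ meets each $\Gjr$ in a distinct component, so has at least $N(r)$ components; the explicit set $B(x_0,s)\cup\bigcup_{j=1}^{N(r)}\Gjr$ has exactly $N(r)$), combined with the monotonicity of $N(\,\cdot\,)$ from Lemma~\ref{lem-dec} and integer-valuedness for the final clause. Your counting function $m(r)$ and the identity $m(r)=N(r)$ simply make explicit the bookkeeping that the paper's proof carries out implicitly via minimality of $N$.
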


\begin{proof}
The proof is similar to the proof of Proposition~\ref{prop1-fin}
with some extra details.
Assume first that $\Om$ is $N$-connected at $x_0$.
Just as in the proof of Proposition~\ref{prop1-fin}
we find that for all $r>0$ it is true that
$x_0 \notin \itoverline{\Hr{r}}$ and that $N(r) \le N$.
As $N(\,\cdot\,)$ is nonincreasing the latter condition is equivalent
to the condition $\lim_{r \to 0} N(r) \le N$. Since $N$ is the minimal such
integer, it follows that $\lim_{r\to 0}N(r)=N$.

Conversely, by Lemma~\ref{lem-dec}, $N(r)  \le \lim_{r \to 0} N(r)$
 for all $r>0$. Thus, the proof of Proposition~\ref{prop1-fin} shows that
in this case $\Om$ is at most $\lim_{r \to 0} N(r)$-connected
at $x_0$. It follows that we also must have $N=\lim_{r \to 0} N(r)$.
As $N(\,\cdot\,)$ is decreasing and integer-valued there must be
some $r_0>0$ such that $N(r_0)=N$.
\end{proof}

\begin{cor} \label{cor1-bdd}
The domain\/ $\Om$ is boundedly connected at $x_0$
if and only if $x_0 \notin \itoverline{\Hr{r}}$ for all $r>0$
and\/ $\lim_{r \to 0} N(r) < \infty$.
\end{cor}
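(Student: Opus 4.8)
The plan is to deduce the corollary directly from Proposition~\ref{prop1-N} together with the definition of bounded connectivity. Recall from Definition~\ref{deff-finite-conn} that $\Om$ is boundedly connected at $x_0$ precisely when there is a positive integer $N$ for which $\Om$ is $N$-connected at $x_0$ (the $N$ there being the minimal admissible integer). Hence the assertion to be proved is equivalent to the statement that $\Om$ is $N$-connected at $x_0$ for some finite $N$ if and only if $x_0 \notin \itoverline{\Hr{r}}$ for all $r>0$ and $\lim_{r\to 0} N(r) < \infty$.

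First I would check that the condition $\lim_{r\to 0} N(r)<\infty$ is unambiguous under the hypothesis $x_0\notin\itoverline{\Hr{r}}$ for all $r>0$. By Lemma~\ref{lem-dec} the function $N(\,\cdot\,)$ is then nonincreasing, so it is nondecreasing and integer-valued as $r$ decreases, and therefore $\lim_{r\to 0}N(r)$ exists in $\{1,2,\dots\}\cup\{\infty\}$. It is at least $1$, since if $N(r)=0$ then $B(x_0,r)\cap\Om=\Hr{r}$, which together with $x_0\in\itoverline{\Om}$ would force $x_0\in\itoverline{\Hr{r}}$, contrary to assumption. For the forward implication, if $\Om$ is boundedly connected at $x_0$ then it is $N$-connected for the minimal admissible $N$, and Proposition~\ref{prop1-N} yields $x_0\notin\itoverline{\Hr{r}}$ for all $r>0$ and $\lim_{r\to 0}N(r)=N<\infty$. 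Conversely, if $x_0\notin\itoverline{\Hr{r}}$ for all $r>0$ and $\lim_{r\to 0}N(r)<\infty$, then setting $N:=\lim_{r\to 0}N(r)$, which is a finite positive integer by the observation above, Proposition~\ref{prop1-N} shows that $\Om$ is $N$-connected at $x_0$, and in particular boundedly connected.

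Since the whole argument is essentially a repackaging of Proposition~\ref{prop1-N}, I do not expect any serious obstacle. The only points requiring care are the logical passage between ``boundedly connected'' (existence of \emph{some} $N$) and ``$N$-connected'' (the \emph{minimal} $N$), and verifying that $\lim_{r\to 0}N(r)$ is well defined under the stated hypothesis so that the inequality $\lim_{r\to 0}N(r)<\infty$ carries an unambiguous meaning.
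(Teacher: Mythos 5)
Your proof is correct and follows exactly the paper's route: the paper simply states that the corollary follows directly from Proposition~\ref{prop1-N}, which is what you carry out. Your extra care about the existence of $\lim_{r\to 0}N(r)$ (via Lemma~\ref{lem-dec}) and the passage between ``boundedly connected'' and ``$N$-connected for the minimal $N$'' just makes explicit what the paper leaves implicit.
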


This corollary follows directly from Proposition~\ref{prop1-N}.

\section{The Mazurkiewicz distance \texorpdfstring{$\dM$}{}}
\label{sect-dM}

\emph{From now on, we assume in this paper that $X$ is a \emph{proper} 
\textup{(}i.e.\
closed bounded sets are compact\/\textup{)} locally connected metric space.}

\medskip

Since $X$ is proper it is complete, and thus
Mazurkiewicz--Moore--Menger's theorem
shows that $X$ is locally pathconnected, see
Section~\ref{sect-lc}.

\begin{deff}
We define the \emph{Mazurkiewicz distance} $\dM$ on $\Om$ by
\[
     \dM(x,y) =\inf \diam E,
\]
where the infimum is taken over all connected sets $E \subset \Om$
containing $x,y \in \Om$.

For comparison, let us further define the \emph{inner metric}
$\din$ on $\Om$ by
\[
     \din(x,y)=\inf l_\ga,
\]
where the infimum is taken over all
rectifiable curves $\ga:[0,l_\ga] \to \Om$ parameterized by
arc length such that $\ga(0)=x$ and $\ga(l_\ga)=y$.
\end{deff}

The Mazurkiewicz distance $\dM$ is always a metric on $\Om$
and, because $X$ is locally connected, it generates the same topology as $d$ on $\Om$.
Furthermore, we have $d \le \dM \le \din$. The function $\din$ is a metric if and 
only if $\Om$ is rectifiably connected,
i.e.\ any two points are connected by a rectifiable curve
(a curve with finite length) in $\Om$. 

The Mazurkiewicz distance was introduced
by Mazurkiewicz~\cite{mazurkiewicz16},
 in relation to a classification of points on $n$-dimensional Euclidean continua.
The metric $\dM$ goes under different names in the literature,
it is e.g.\  denoted $\rho_A$ in \cite{mazurkiewicz16}, 
called \emph{relative distance}
and denoted $\varrho_r$ in Kuratowski~\cite{kuratowski2}, 
called \emph{Mazurkiewicz intrinsic metric} and denoted $\delta_D$ 
in Karmazin~\cite{karmazin2008},
denoted $\mu$ in Newman~\cite{newman}
(only in the first edition), and by $m$ in Ohtsuka~\cite{ohtsuka}.
In Aikawa--Hirata~\cite{AikawaHirata}, 
Freeman--Herron~\cite{FreemanHerron} 
and Herron--Sullivan~\cite{HerronSullivan} 
it is called \emph{inner diameter distance}.
Here and in
Adamowicz--Bj\"orn--Bj\"orn--Shan\-mu\-ga\-lin\-gam~\cite{ABBSprime}
and Bj\"orn--Bj\"orn--Shanmugalingam~\cite{BBSdir}
we call it the \emph{Mazurkiewicz distance}. 

\begin{deff}\label{Mazur-cl}
The completion of the metric space $(\Om,\dM)$ 
 is denoted $\overline{\Om}^M$.
Furthermore, the \emph{Mazurkiewicz boundary}
is $\partial_M\Om:=\overline{\Om}^M\setminus\Om$.
\end{deff}

Note that each point $y\in\partial_M\Om$ corresponds to 
(an equivalence class of) a sequence
$\{x_j\}_{j=1}^\infty$ which is a Cauchy sequence 
with respect to the Mazurkiewicz metric $d_M$ and 
which does not have
a limit point in $\Om$.

As advertised 
in Theorem~\ref{thm-clOmm-cpt}, 
the  closure $\clOmm$ is compact if and only if
$\Om$ is finitely connected at the boundary.
This result was proven by Karmazin~\cite[Theorem~1.3.8]{karmazin2008}, 
but the proof is
available only in Russian and not widely accessible. 
For the reader's convenience, we give a more elementary self-contained
proof which
appeals only to basic definitions.

\begin{proof}[Proof of Theorem~\ref{thm-clOmm-cpt}]
First assume that $\Om$ is finitely connected at the boundary.
As $\clOmm$ is a metric space, it is compact
if and only if it is sequentially compact,
see e.g.\ Munkres~\cite[Theorem~28.2]{munkres2}. 

Let $\{y_j\}_{j=1}^\infty$ be a sequence in $\clOmm$.
For each $j$ we can find $x_j \in \Om$ such that $\dM(x_j,y_j)<1/j$.
Then $\{x_j\}_{j=1}^\infty$ has a convergent subsequence if and only if
$\{y_j\}_{j=1}^\infty$ has, and it is thus enough to show that every sequence
$\{x_j\}_{j=1}^\infty$ from $\Om$ has a convergent subsequence in $\overline{\Om}^M$.
To this end, let $\{x_j\}_{j=1}^\infty$ be a sequence in $\Om$. Because $X$ is 
proper and $\Om$ is bounded, this sequence has a cluster point
$x_0 \in \overline{\Om}$ with respect to the original metric $d$.
By taking a subsequence if necessary we may assume that $x_j \doverto x_0$ as
$j \to \infty$. If $x_0 \in \Om$, then 
the local connectivity of $X$ implies that
$x_j \dMto x_0$ as $j \to\infty$,
and we are done. Assume therefore that $x_0 \in \bdy \Om$.

By assumption, $\Om$ is finitely connected at $x_0$, and in particular
$N(r)<\infty$ for all $r>0$.
There must therefore be some $k_0$ such that infinitely many of
the $x_j$ belong to $\Gj{k_0}{1}$.
Collect them in a subsequence
$\{x_j^{(0)}\}_{j=1}^\infty$.
Similarly, there is some $k_1$ such that infinitely many of the
$x_j^{(0)}$ belong to $\Gjbig{k_1}{\tfrac{1}{2}}$.
Again, collect them in a subsequence
$\{x_j^{(1)}\}_{j=1}^\infty$.
Proceeding inductively, we find $k_2$, $k_3, \ldots$, and
further and further refined subsequences $\{x_j^{(2)}\}_{j=1}^\infty$,
$\{x_j^{(3)}\}_{j=1}^\infty, \ldots$
such that $x_j^{(m)} \in \Gj{k_m}{2^{-m}}$.
A diagonalization argument lets us choose $z_j=x_j^{(j)}$, a subsequence
of the original sequence. Now, if $m>j$, then
\[
    \dM(z_j,z_m) \le \diam \Gj{k_j}{2^{-j}}
    \le \diam B(x_0,2^{-j}) \le 2^{1-j}.
\]
Hence $\{z_j\}_{j=1}^\infty$ is a Cauchy sequence in the metric
$\dM$ and thus has a limit in $\clOmm$.

Conversely, assume that $\Om$ is not finitely connected at the boundary.
Then there is some $x_0 \in \bdy \Om$ such that $\Om$ is not
finitely connected at $x_0$.
By Proposition~\ref{prop1-fin}, there is some $r>0$ such that either
$N(r)=\infty$ or $x_0 \in \itoverline{\Hr{r}}$.

If $N(r)=\infty$, then we can find $x_j \in \Gj{j}{r} \cap B\bigl(x_0,\frac{1}{2}r\bigr)$,
$j=1,2,\ldots\,$.
If $j \ne k$ and $x_j,x_k \in E \subset \Om$ for some connected set $E$,
then by construction $E$ must contain a point outside $B(x_0,r)$,
and hence $\diam E \ge r/2$ showing that
$\dM(x_j,x_k)\ge r/2$.
This implies that $\{x_j\}_{j=1}^\infty$ cannot have a convergent
subsequence with respect to $d_M$.

Finally, if $x_0 \in \itoverline{\Hr{r}}$, then
we can find points $x_j \in \Hr{r} \cap B(x_0,r/2)$
such that $x_j$ belongs to distinct components
of $\Hr{r}$ (and thus of $\Om \cap B(x_0,r)$).
The rest of the argument is identical to the reasoning of
the case $N(r)=\infty$ above.
\end{proof}

The following result and its proof give a good
understanding of how the boundary $\bdym \Om$ looks like.

\begin{prop}  \label{prop-N-conn}
Assume that\/ $\Om$ is $N$-connected at $x_0$.
Then there are exactly $N$ boundary points in $\bdym \Om$
corresponding to $x_0$.
\end{prop}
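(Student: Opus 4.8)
The plan is to build an explicit bijection between the ``branches'' of $\Om$ accumulating at $x_0$ and the points of $\bdym\Om$ corresponding to $x_0$ (that is, the boundary points whose representing $\dM$-Cauchy sequences $d$-converge to $x_0$; recall from the remark after Definition~\ref{Mazur-cl} that $d\le\dM$ forces such a $d$-limit to exist, and it lies in $\bdy\Om$ by the local-connectivity argument used in the proof of Theorem~\ref{thm-clOmm-cpt}). First I would record the structural consequences of $N$-connectivity. By Proposition~\ref{prop1-N} we have $x_0\notin\itoverline{\Hr{r}}$ for all $r>0$, and there is $r_0>0$ with $N(r_0)=N$; since $N(\,\cdot\,)$ is nonincreasing (Lemma~\ref{lem-dec}) and $\lim_{r\to0}N(r)=N=N(r_0)$, in fact $N(r)=N$ for every $0<r\le r_0$. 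Thus at each such scale there are exactly $N$ components of $B(x_0,r)\cap\Om$ touching $x_0$. Using the injection from the proof of Lemma~\ref{lem-dec} (a bijection now, since the counts coincide), each touching component at a smaller radius lies inside a unique touching component at a larger radius, so I can fix the labels $\Gjx{1}{r_0}{x_0},\dots,\Gjx{N}{r_0}{x_0}$ and propagate them coherently to all scales $0<r\le r_0$, obtaining nested families $C_j(r)$ with $C_j(r')\subset C_j(r)$ for $r'<r\le r_0$ and $x_0\in\itoverline{C_j(r)}$.

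Next I would define the candidate boundary points. For each branch $j$ pick $a^{(m)}_j\in C_j(2^{-m})$ whenever $2^{-m}\le r_0$. If $m<l$, both points lie in the connected set $C_j(2^{-m})\subset B(x_0,2^{-m})$, so $\dM(a^{(m)}_j,a^{(l)}_j)\le\diam C_j(2^{-m})\le2^{1-m}$; hence $\{a^{(m)}_j\}_m$ is $\dM$-Cauchy and, lying in $B(x_0,2^{-m})$, it $d$-converges to $x_0$. It therefore determines a point $\xi_j\in\bdym\Om$ corresponding to $x_0$, and the same diameter estimate shows $\xi_j$ does not depend on the choices made within branch $j$.

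It remains to verify that the $\xi_j$ are pairwise distinct and that they exhaust all points of $\bdym\Om$ corresponding to $x_0$. For both I would use the same separation estimate, which is the crux of the argument: if $a\in C_j(r)\cap B(x_0,r/2)$ and $b\in C_{j'}(r)\cap B(x_0,r/2)$ with $j\ne j'$, then any connected $E\subset\Om$ containing $a$ and $b$ cannot be contained in $B(x_0,r)$, for otherwise $E$ would be a connected subset of $B(x_0,r)\cap\Om$ meeting two distinct components, contradicting maximality; hence $E$ contains a point at distance $\ge r$ from $x_0$, so $\diam E\ge r/2$ and thus $\dM(a,b)\ge r/2$. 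Applying this to the representatives of $\xi_j$ and $\xi_{j'}$ and passing to the limit gives $\dM(\xi_j,\xi_{j'})\ge r/2>0$, so the $N$ points are distinct.

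For exhaustion, take any $\xi\in\bdym\Om$ corresponding to $x_0$, represented by a $\dM$-Cauchy sequence $\{w_m\}$ with $w_m\doverto x_0$. Fix $r\le r_0$; since $x_0\notin\itoverline{\Hr{r}}$, some ball $B(x_0,s)$ misses $\Hr{r}$, so all sufficiently late $w_m$ lie in one of the $N$ touching components. Were infinitely many of them in two different branches, the separation estimate would force $\dM(w_m,w_l)\ge r/2$ for arbitrarily large $m,l$, contradicting the Cauchy property; hence $\{w_m\}$ is eventually in a single branch $C_j(r)$, and the nesting $C_j(r')\subset C_j(r)$ makes this index $j$ independent of $r$. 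Comparing $\{w_m\}$ with the representative of $\xi_j$ (both eventually lie in $C_j(2^{-m})$, whose diameter tends to $0$) yields $\xi=\xi_j$. Thus $j\mapsto\xi_j$ is the desired bijection and there are exactly $N$ such boundary points. I expect the main obstacle to be the bookkeeping that makes the labeling coherent across scales, so that ``the branch at scale $r$'' is unambiguous and stable under refinement; the metric separation estimate, though central, is a short consequence of the maximality of connected components.
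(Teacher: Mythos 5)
Your proposal is correct and follows essentially the same route as the paper's proof: both exploit Proposition~\ref{prop1-N} and Lemma~\ref{lem-dec} to get $N$ nested families of components at all small scales, build one $\dM$-Cauchy sequence per branch, prove distinctness via the estimate that a connected set in $\Om$ meeting two different components of $B(x_0,r)\cap\Om$ must leave $B(x_0,r)$, and prove exhaustion by showing any $\dM$-Cauchy sequence $d$-converging to $x_0$ eventually lives in a single branch. Your bookkeeping of the coherent labeling $C_j(r'')\subset C_j(r')$ for $r''<r'\le r_0$ is just a more explicit version of the ordering the paper assumes without comment.
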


By saying that the point $y=\{x_j\}_{j=1}^\infty\in\bdym\Om$ 
corresponds to $x_0$, we mean that $x_j\doverto x_0\in\partial\Om$.

\begin{proof}
Without loss of generality we may assume that $N(1)=N$.
By Lemma~\ref{lem-dec}, $N(\,\cdot\,)$ is nonincreasing,
and so $N(r)=N$ for $0<r\le 1$.
For $0<r<1$ each $\Gjr$ belongs to one
of the components $\Gkone$, and we may assume that they
have been ordered so that $\Gjr\subset\Gjs$ if $0<r<s \le 1$.
For each positive integer $j \le N$, choose a
sequence $x_k \in \Gj{j}{1}$ such that $x_k \doverto x_0$.
As $x_0 \in \itoverline{\Gj{j}{1}}$ there are such sequences.
We shall show that $\{x_k\}_{k=1}^\infty$ has a limit in $\clOmm$.

Let $\eps>0$. Then by Proposition~\ref{prop1-N},
$x_0 \notin \itoverline{\Hr{\eps}}$. Therefore there is
$0 < \de<\eps$ such that
$B(x_0,\de) \cap \itoverline{\Hr{\eps}} = \emptyset$.
Moreover there is $K>1$ such that
$d(x_k,x_0) < \de$ for $k >K$.
Thus, for $k>K$, $x_k \in \Gj{j(k)}{\eps}$ for some
$j(k)$, and as $x_k \in \Gj{j}{1}$ we must have $j(k)=j$.
Hence for $k,l >K$,
\[
     \dM(x_k,x_l) \le \diam \Gj{j}{\eps} \le 2\eps.
\]
Thus $\{x_k\}_{k=1}^\infty$ is a Cauchy sequence
in the $\dM$-metric and has a limit in $\clOmm$.

As $\{x_k\}_{k=1}^\infty$ was an arbitrary sequence
in $\Gj{j}{1}$ with $x_k \doverto x_0$, all such sequences
must have the same limit in $\clOmm$, denoted $y_j$.

Now take
$\partial_M\Om\ni y=\{x_k\}_{k=1}^\infty$, where $\{x_k\}_{k=1}^\infty$ 
is a sequence in $\Om$ that is Cauchy with 
respect to the metric $\dM$ and in addition $x_k \doverto x_0$.
As $N$ is finite and $x_0 \notin \itoverline{\Hr{1}}$
there is an integer $j$ and a subsequence $\{x_{k_l}\}_{l=1}^\infty$
such that $x_{k_l} \in \Gj{j}{1}$ for all $l$.
It follows that $x_{k_l} \dMto y_j$.
Hence $y=y_j$ and $x_k \dMto y_j$.

Thus $y_1,\ldots,y_N$ are all the boundary points
in $\clOmm$ corresponding to $x_0$.
It remains to show that they are all distinct.
Let $1 \le j <k\le N$.
By the construction of $y_j$ we can find $x_j \in \Gj{j}{1}$
such that $\dM(x_j,y_j)< \tfrac{1}{4}$, and similarly
$x_k \in \Gj{k}{1}$ such that $\dM(x_k,y_k)< \tfrac{1}{4}$.
In particular $d(x_j,y_j)< \tfrac{1}{4}$
and $d(x_k,y_k)< \tfrac{1}{4}$.
Let $F \subset \Om$ be any connected open set containing
$x_j$ and $x_k$. As $x_j$ and $x_k$ belong to different components of
$B(x_0,1) \cap \Om$, the set
$F$ must contain a point $z \in \Om \setm B(x_0,1)$. Hence
\[
     \diam F  \ge d(z,x_j) \ge d(z,x_0) - d(x_0,x_j)
        > 1-\tfrac{1}{4} = \tfrac{3}{4},
\]
and thus $\dM(x_j,x_k) \ge \tfrac{3}{4}$. So
\[
       \dM(y_j,y_k) \ge \dM(x_j,x_k) - \dM(x_j,y_j) - \dM(x_k,y_k)
        > \tfrac{3}{4} - \tfrac{1}{4} - \tfrac{1}{4}
    = \tfrac{1}{4}.
    \qedhere
\]
\end{proof}

If $\Om$ is finitely but not boundedly connected
at $x_0$ one similarly gets infinitely many points
in $\bdym \Om$ corresponding to $x_0$.
The proof of this fact is similar to the proof above.

\medskip

We define $\Phi : \clOmm \to \overline{\Om}$ in the following way:
For $x \in \Om$ we let $\Phi(x)=x$.
Since $d \le \dM$, this is a $1$-Lipschitz
map on $\Om$. Because $\overline{\Om}$ is complete, 
this map has a unique continuous extension to $\clOmm$,
which we again denote by $\Phi$, which is
also $1$-Lipschitz. If $x_0 \in \bdy \Om$ and $\Om$ is $N$-connected at $x_0$, then
$\Phi^{-1}(x_0)$ consists of the $N$ points called $y_1,\ldots,y_N$ in the proof
above.

\section{Various types of local connectedness at the boundary}
\label{sect-loc-conn-bdry}

For a boundary point $x_0 \in \bdy \Om$ there are four types of
local connectedness one can discuss: that $\Om$ is locally connected
at $x_0$, that $\Om \cup \{x_0\}$ is locally connected at $x_0$,
that $\bdy \Om$ is locally connected at $x_0$ and that
$X \setm \Om$ is locally connected at $x_0$.
The last three conditions come in four variants according to
Definition~\ref{deff-lc}, while for the first we have just one variant
(given in Definition~\ref{deff-finite-conn}).
These properties are closely related and also closely related to
the compactness of $\clOmm$.

Let us start with the following characterization valid for
finitely connected planar domains.
We first recall the definition of local accessibility.

\begin{deff}
A boundary point $x_0 \in \bdy \Om$
is \emph{locally accessible} from $\Om$ if
for every $r>0$ there is $\de >0$ such that
for every $x \in B(x_0,\de)\cap \Om$
there is a (not necessarily rectifiable) curve 
$\ga : [0,1] \to X$ such that $\ga([0,1)) \subset B(x_0,r) \cap \Om$,
$\ga(0)=x$ and $\ga(1)=x_0$.
\end{deff}

See also the related definitions about connectivity given in
Definition~\ref{deff-lc}. 

The following theorem 
(except for \ref{it-compl})
was proven for simply connected planar domains by
Newman in the first edition of the book~\cite{newman}
(but was omitted from the
second edition of \cite{newman}).

\begin{thm} \label{thm-newman}
Assume that\/ $\Om \subset \R^2$ is a bounded and finitely  connected domain.
Then the following are equivalent\/\textup{:}
\begin{enumerate}
\item \label{it-locacc}
each  $x_0 \in \bdy \Om$
is locally accessible from\/ $\Om$\/\textup{;}
\item \label{it-locconn2}
for every $x_0 \in \bdy \Om$ we have that\/
$\Om \cup \{x_0\}$ is locally connected at $x_0$\/\textup{;}
\item \label{it-Hr}
for all $r>0$ and $x_0 \in \bdy\Om$ it is true that $x_0 \notin \itoverline{\Hrx{r}{x_0}}$\textup{;}
\item \label{it-finconn}
$\Om$ is finitely connected at the boundary\/\textup{;}
\item \label{it-cpt}
$\clOmm$ is compact\/\textup{;}
\item \label{it-locconn}
$\bdy \Om$ is locally connected\/\textup{;}
\item \label{it-compl}
$\R^2 \setm \Om$ is locally connected.
\end{enumerate}
\end{thm}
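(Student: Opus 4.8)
The plan is to turn the seven conditions into a single equivalence class by adding only a few implications to those already available for general proper locally connected spaces. From Theorem~\ref{thm-newman-gen}, together with Theorem~\ref{thm-clOmm-cpt} for the step (d)~$\Leftrightarrow$~(e), I may already use
(a)~$\Leftrightarrow$~(b)~$\Leftrightarrow$~(c), (d)~$\Leftrightarrow$~(e), (d)~$\Rightarrow$~(c) and (f)~$\Rightarrow$~(g). It therefore suffices to prove the three implications special to the bounded finitely connected planar setting, namely (c)~$\Rightarrow$~(d), (d)~$\Rightarrow$~(f) and (g)~$\Rightarrow$~(d). Indeed, (c)~$\Rightarrow$~(d) combined with (d)~$\Rightarrow$~(c) gives (c)~$\Leftrightarrow$~(d) and merges (a)--(e) into one class, while (d)~$\Rightarrow$~(f)~$\Rightarrow$~(g)~$\Rightarrow$~(d) (using the general (f)~$\Rightarrow$~(g)) attaches (f) and (g).

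Throughout I would pass to the sphere $\Sphere^2=\R^2\cup\{\infty\}$, where $\Sphere^2\setm\Om$ is compact and, by finite connectivity, has finitely many components $F_0,\dots,F_m$ (say $\infty\in F_0$). Each of these finitely many components is open as well as closed in $\Sphere^2\setm\Om$, so they lie at positive distance from one another; consequently, near any $x_0\in\bdy\Om$ the complement coincides with the single continuum $F_{i_0}\ni x_0$. This localisation is exactly where finite connectivity enters and is what makes the planar arguments manageable. In these terms $\bdy\Om=\bigcup_i\bdy F_i$ is a separated union, so (f) is equivalent to each $\bdy F_i$ being locally connected, and similarly (g) is equivalent to each $F_i$ being a locally connected continuum.

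The heart of the matter, and the main obstacle, is a single geometric mechanism from which both (c)~$\Rightarrow$~(d) and (g)~$\Rightarrow$~(d) will follow: \emph{if\/ $\Om$ is not finitely connected at some $x_0$, then there is a boundary point $q$ and a radius $\rho>0$ such that\/ $q\in\itoverline{\Hrx{\rho}{q}}$ and\/ $F_{i_0}$ fails to be locally connected at $q$.} Granting this, (c) forbids the first conclusion at every boundary point and hence forces (d), while (g) forbids the second and hence also forces (d). To prove the claim I would argue via Proposition~\ref{prop1-fin}: failure of (d) at $x_0$ means either $x_0\in\itoverline{\Hrx{r}{x_0}}$ (take $q=x_0$ directly) or $N(r)=\infty$, so that infinitely many components $G_j$ of $B(x_0,r)\cap\Om$ with $x_0\in\itoverline{G_j}$ cluster at $x_0$. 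Consecutive $G_j$ are separated by connected ``prongs'' of the single local continuum $F_{i_0}$; choosing $p_j\in G_j$ with $p_j\to x_0$ shows these prongs reach arbitrarily close to $x_0$, and by compactness their tips accumulate at some $q\in F_{i_0}\cap\bdy\Om$. The slivers of $\Om$ trapped between successive prongs near $q$ are then infinitely many components of $B(q,\rho)\cap\Om$ none of which has $q$ in its closure, so $q\in\itoverline{\Hrx{\rho}{q}}$, while the prongs themselves are locally separate pieces of $F_{i_0}$ accumulating at $q$, so $F_{i_0}$ is not locally connected at $q$. Organising the cyclic order of the $G_j$ around $x_0$ and pinning down the accumulation point $q$ is the delicate planar step (this is the content of the Newman-type Theorem~\ref{thm-newman-count-singleton}), and it is precisely here that planarity is indispensable; the corresponding statement fails in $\R^3$, which is why (c)--(e) and (f)--(g) decouple there.

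It remains to supply the one positive crossing (d)~$\Rightarrow$~(f), which I expect to be routine once the local structure is in hand. If $\Om$ is finitely connected at $x_0$, then for small $r$ Proposition~\ref{prop1-fin} gives $B(x_0,r)\cap\Om=G_1\cup\dots\cup G_k$ with all $G_j$ touching $x_0$ and $x_0\notin\itoverline{\Hrx{r}{x_0}}$, so locally $\bdy\Om$ is exactly $\bigcup_{j}\bdy G_j$, a finite union of boundary pieces each containing the common point $x_0$. This tame ``finite sector'' picture makes $\bdy\Om\cap B(x_0,\eps)$ a connected relative neighbourhood of $x_0$ for every small $\eps$, i.e.\ $\bdy\Om$ is locally connected at $x_0$, which is (f). I would close the argument by assembling the cycle: (c)~$\Leftrightarrow$~(d)~$\Leftrightarrow$~(e), then (d)~$\Rightarrow$~(f)~$\Rightarrow$~(g)~$\Rightarrow$~(d), together with (a)~$\Leftrightarrow$~(b)~$\Leftrightarrow$~(c). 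All of the genuine difficulty is concentrated in the prong–accumulation step of the previous paragraph; the two crossings involving (f) reduce to finite bookkeeping with the locally finite family $\{G_j\}$.
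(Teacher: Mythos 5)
Your high-level scheme is logically sound: given Theorem~\ref{thm-newman-gen} and Theorem~\ref{thm-clOmm-cpt}, it would indeed suffice to prove (c)$\Rightarrow$(d), (d)$\Rightarrow$(f) and (g)$\Rightarrow$(d), and your localisation to the finitely many complementary continua (pairwise at positive distance) is the same reduction the paper uses in its Case~2. But two of your three new implications are not actually established. The step (d)$\Rightarrow$(f) is argued purely pointwise: from finite connectivity of $\Om$ at $x_0$ alone you conclude that $\bdy\Om$ is locally connected at $x_0$. That pointwise statement is false. In Example~\ref{ex-comb} (the topologist's comb II, a simply connected domain), $\Om$ is $1$-connected at $x_1=(0,1)$, so your ``finite sector picture'' holds there with a single component $G_1$; yet $\bdy\Om$ is not even connected im kleinen at $x_1$, since every ball $B(x_1,\eps)$ contains infinitely many pairwise disjoint tooth tops $\{1/k\}\times(1-\de,1]$, each a component of $\bdy\Om\cap B(x_1,\eps)$ separated from the left edge. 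This is precisely why Theorem~\ref{thm-newman-pt} records no implication from the (d$'$)-type conditions to the (f$'$)-type conditions. The global implication (d)$\Rightarrow$(f) is true but cannot be proved by local bookkeeping at each point; in the paper it comes from Newman's theorem (a)$\Leftrightarrow$(f) for simply connected domains together with the localisation argument.

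The central geometric claim behind (c)$\Rightarrow$(d) and (g)$\Rightarrow$(d) is also not proved. In the case $x_0\in\itoverline{\Hrx{r}{x_0}}$ you ``take $q=x_0$ directly'', but that yields only the first conclusion of your claim; the complement may be perfectly locally connected at such a point. Indeed, in Example~\ref{ex-comb-wide} (comb I) one has $(0,0)\in\itoverline{\Hrx{r}{(0,0)}}$, yet $\R^2\setm\Om$ \emph{is} locally connected at $(0,0)$, because inside any ball around the origin every tooth is attached to the half-plane $\{y\le 0\}$ (Lemma~\ref{lem-connected}). So (g)$\Rightarrow$(d) is not established in this case: one must locate a \emph{different} point $q$ (here $(0,y)$ with $0<y\le 1$ works), and that is exactly the kind of accumulation argument you leave as a heuristic. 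In the case $N(r)=\infty$ your prong/sliver argument is a picture, not a proof; making it rigorous is where essentially all of the difficulty of the theorem lies. Moreover, you cannot discharge this step by citing Theorem~\ref{thm-newman-count-singleton}: the paper's proof of that theorem invokes Theorem~\ref{thm-newman} itself (applied to the simply connected domain $\Sp^2\setm K$), so the appeal would be circular. By contrast, the paper proves the present theorem by quoting Newman ((a)$\Rightarrow$(e) and (a)$\Leftrightarrow$(f)) and Pommerenke ((g)$\Rightarrow$(f)) in the simply connected case and then localising; a self-contained proof along your lines would need a rigorous substitute for those results, e.g.\ separation arguments of the Janiszewski type as in Section~\ref{sect-count-connect}.
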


Since $\Om \subset \R^2$, it is 
\emph{finitely\/ \textup{(}simply\/\textup{)} connected} if and only if
$\Sphere^2\setm \Om$ has
only finitely many (one) components, where $\Sphere^2$ is the Riemann sphere.
Similar characterizations hold for the notions of countable, uncountable, and infinite connectedness
of plane domains.

Observe that we can equivalently rephrase  \ref{it-locconn2},
\ref{it-locconn} and \ref{it-compl}
using any of the other three types of local connectedness in
Definition~\ref{deff-lc}.
For \ref{it-locconn} and \ref{it-compl} the equivalence of the four
notions follows
from the Mazurkiewicz--Moore--Menger theorem, see Section~\ref{sect-lc}.
The Mazurkiewicz--Moore--Menger theorem is  however not 
available for \ref{it-locconn2}
since $\Om \cup \{x_0\}$ is not complete,
so in this case we instead have to appeal to
Theorem~\ref{thm-newman-pt} below.

In view of~\ref{it-finconn} and~\ref{it-locconn}
it is also natural to ask if the above statements could be
equivalent to the statement 
``$\overline{\Om}$ is locally connected'' as well.
That $\overline{\Om}$ is locally connected
 follows from the other statements, which
can be shown in the same way as the implication 
\ref{it-locconn-pt-lc} $\imp$ \ref{it-compl-pt-lc} in 
Theorem~\ref{thm-newman-pt}.
However, it does not imply the other statements, 
as shown by the topologist's comb in
Example~\ref{ex-comb-wide}. 
We therefore leave it out of the rest of the discussion.

Before proving Theorem~\ref{thm-newman}, let us consider
how much of this result is valid for more general domains $\Om$
(i.e.\ infinitely connected in $\R^2$ as well as general domains
in $\R^n$ and metric spaces).

\begin{thm} \label{thm-newman-gen}
Consider the same statements as
in Theorem~\ref{thm-newman} for a general bounded domain\/ $\Om$
in a proper locally connected metric space $X$
\textup{(}with\/ $\R^2$ replaced by $X$ in \ref{it-compl}\textup{)}.

Then the following implications are true\/\textup{:}
\[
    \ref{it-cpt}
    \ \Longleftrightarrow \  \ref{it-finconn}
    \ \Longrightarrow \  \ref{it-Hr}
    \ \Longleftrightarrow \  \ref{it-locconn2}
    \ \Longleftrightarrow \  \ref{it-locacc}
 \quad \text{and} \quad
       \ref{it-locconn} \ \Longrightarrow \  \ref{it-compl}.
\]

No other implication  is true even if we assume that\/ $\Om \subset \R^3$
is homeomorphic to a ball.
\end{thm}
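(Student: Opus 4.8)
The plan is to treat the chain of equivalences and implications first, and then the sharpness (``no other implication'') separately.

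For the positive implications, note that \ref{it-cpt}$\Leftrightarrow$\ref{it-finconn} is precisely Theorem~\ref{thm-clOmm-cpt}, while \ref{it-finconn}$\Rightarrow$\ref{it-Hr} is immediate from Proposition~\ref{prop1-fin}. The substance is the cycle \ref{it-Hr}$\Rightarrow$\ref{it-locacc}$\Rightarrow$\ref{it-locconn2}$\Rightarrow$\ref{it-Hr}. For \ref{it-Hr}$\Rightarrow$\ref{it-locacc} I would fix $x_0$ and $r$, use that $x_0\notin\itoverline{\Hr{\rho}}$ at every scale $\rho=r/2^k$ to choose $\de>0$ with $B(x_0,\de)\cap\Hr{r}=\emptyset$, and then build the accessibility curve by concatenating paths lying in successively smaller components of $B(x_0,r/2^k)\cap\Om$ that have $x_0$ in their closure; here I use that $X$ is locally pathconnected (proper and locally connected, via the Mazurkiewicz--Moore--Menger theorem of Section~\ref{sect-lc}), so each such component, being open and connected, is pathconnected. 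For \ref{it-locacc}$\Rightarrow$\ref{it-locconn2}, the union of the accessibility curves from all points of $B(x_0,\de)\cap\Om$ to $x_0$ is a connected subset of $\Om\cup\{x_0\}$ containing the neighbourhood $B(x_0,\de)\cap(\Om\cup\{x_0\})$, so $\Om\cup\{x_0\}$ is connected im kleinen at $x_0$; since it is trivially connected im kleinen at every point of the open set $\Om$, Whyburn's upgrade (Section~\ref{sect-lc}) promotes this to local connectedness at $x_0$. For \ref{it-locconn2}$\Rightarrow$\ref{it-Hr}, a connected neighbourhood $G\subset B(x_0,r)$ of $x_0$ in $\Om\cup\{x_0\}$ forces every component of $G\cap\Om$ to accumulate at $x_0$ (otherwise it would be relatively open and closed in the connected set $G$), whence $G\cap\Om$ is disjoint from $\Hr{r}$ and $x_0\notin\itoverline{\Hr{r}}$.

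For \ref{it-locconn}$\Rightarrow$\ref{it-compl}, set $F=X\setm\Om$, which is closed and hence complete, so by the Mazurkiewicz--Moore--Menger theorem it suffices to show that $F$ is connected im kleinen at every point. At points of the open set $X\setm\overline{\Om}$ this is automatic; at a point $x_0\in\interior\overline{\Om}\setm\Om$ no exterior points lie near $x_0$, so a connected neighbourhood of $x_0$ in $\bdy\Om$ (given by \ref{it-locconn}) is already a neighbourhood of $x_0$ in $F$. The remaining case $x_0\in\bdy\overline{\Om}$ is the crux: taking a connected neighbourhood $V$ of $x_0$ in $\bdy\Om$ and adjoining those components of $B(x_0,r)\cap(X\setm\overline{\Om})$ whose closures meet $V$ yields a connected set, and the step I expect to require the most care is showing that every exterior component having points arbitrarily close to $x_0$ does attach to $V$. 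I would derive this from the local connectedness of $\bdy\Om$, arguing that the boundary attaching points of such components cluster at $x_0$ and hence eventually fall inside $V$.

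For the sharpness statement, I would first form the transitive closure of the implications just proved: writing the blocks $A=\{\ref{it-finconn},\ref{it-cpt}\}$ and $B=\{\ref{it-locacc},\ref{it-locconn2},\ref{it-Hr}\}$, the only valid implications are those internal to $A$, internal to $B$, from $A$ to $B$, and $\ref{it-locconn}\Rightarrow\ref{it-compl}$. Every other ordered pair must be refuted, and it suffices to realize four truth-assignments consistent with the proved implications but violating all others: (i) $B$ holds while $A$ fails; (ii) $A$ (hence $B$) holds while \ref{it-compl} fails (so \ref{it-locconn} fails too); (iii) \ref{it-locconn} (hence \ref{it-compl}) holds while \ref{it-locacc} fails (so all of $A$ and $B$ fail); and (iv) \ref{it-compl} holds while both \ref{it-locconn} and \ref{it-locacc} fail. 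Assignment (i) I would realize by a domain with a boundary point of infinite connectivity all of whose components reach it (so $x_0\notin\itoverline{\Hr{r}}$ yet $N(r)=\infty$), and (ii) by a domain that is finitely connected at the boundary but has a comb-like complement; (iii) and (iv) by domains whose boundary is badly connected while the exterior fills in to keep $F$ locally connected. The genuine obstacle in this half is not the logic but the geometry: each assignment must be realized by a bounded $\Om\subset\R^3$ that is moreover homeomorphic to a ball, and one must verify the precise connectivity behaviour at the relevant boundary points. This is exactly the content of Examples~\ref{ex-R3-finconn-compl-new}--\ref{ex-Jana}.
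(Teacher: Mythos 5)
Your proposal is correct, and while its skeleton (positive implications plus four counterexamples) necessarily matches the paper's, the two substantive pieces are proved by genuinely different means. The paper deduces the entire positive chain from its pointwise Theorem~\ref{thm-newman-pt}: there the cycle \ref{it-Hr} $\eqv$ \ref{it-locconn2} $\eqv$ \ref{it-locacc} is closed by citing Whyburn's accessibility theorem for the step ``connected im kleinen at $x_0$ implies $x_0$ locally accessible'', whereas you close it with a direct telescoping construction, concatenating paths through nested components $\Gj{j}{r2^{-k}}$ whose closures contain $x_0$; this is sound (the paper uses the same nesting device in proving Theorem~\ref{thm-clOmm-cpt} and Proposition~\ref{prop-N-conn}) and buys self-containedness, at the price of re-deriving what the Whyburn citation gives for free, and of losing the finer pointwise diagram that the paper reuses for Theorems~\ref{thm-newman} and~\ref{thm-newman-R2}. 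For \ref{it-locconn} $\imp$ \ref{it-compl} the paper again argues pointwise, by a separation argument: it takes the component $\Gt$ of $G$ containing the connected set $G\cap\bdy\Om$ and shows $F=\Gt\setm\Om$ is connected, since a relative clopen splitting of $F$ would induce one of $G\cap\bdy\Om$, and the piece missing $\bdy\Om$ would then be clopen in the connected set $\Gt$; as $\Gt$ is open, $F$ is automatically a relatively open connected neighbourhood, so the component-attaching analysis you anticipate never arises. Your global alternative (connectedness im kleinen of $X\setm\Om$ at every point, then the completeness-based upgrade) also works, but the crux you flag is settled by a metric nearest-point argument rather than ``by the local connectedness of $\bdy\Om$'': if $y$ lies in a component $C$ of $B(x_0,s)\setm\overline{\Om}$ with $d(y,x_0)<\de$ and $x_0\notin\itoverline{C}$, then the point of $X\setm C$ nearest to $y$ lies in $\bdy C$ within distance $2\de$ of $x_0$, and since $\bdy C\cap B(x_0,s)\subset\bdy\overline{\Om}\subset\bdy\Om$ (take $2\de<s$), it falls inside $V$ as soon as $V\supset B(x_0,2\de)\cap\bdy\Om$; local connectedness of $\bdy\Om$ enters only through $V$ being a connected relative neighbourhood of $x_0$. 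Finally, your sharpness argument coincides with the paper's: the paper invokes the same four Examples~\ref{ex-R3-finconn-compl-new}--\ref{ex-Jana} and leaves the logical bookkeeping implicit, which you spell out correctly; the only slip is your verbal description of assignment (iii), where $\bdy\Om$ must be locally connected (not ``badly connected'') while local accessibility fails, exactly as in Example~\ref{ex-R3-locconn-locacc}.
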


In $\R^2$ a little more is true, even if $\Om$ is not simply connected.

\begin{thm} \label{thm-newman-R2}
Consider the same statements as
in Theorem~\ref{thm-newman} for an infinitely connected bounded domain\/ 
$\Om \subset \R^2$.
Then    \ref{it-locconn} and  \ref{it-compl} fail.

If\/ $\Om$ is moreover countably connected, i.e.\
$\R^2\setm\Om$ has only countably many components, then
\ref{it-locacc}--\ref{it-cpt}
are equivalent.
This holds also if\/ $\R^2\setm\Om$ consists of at most
countably many continua $K_j$,
$j=1,2,\ldots$, and\/ {\rm(}possibly uncountably many\/{\rm)} 
singleton components, and in addition  $\itoverline{E}\setm E$
is a union of a compact set and an at most countable set, 
where $E=\bigcup_{j=1}^\infty K_j$.
\end{thm}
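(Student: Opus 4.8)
The plan is to establish the two assertions separately. Throughout I use the relations already furnished by Theorem~\ref{thm-newman-gen}, namely $\ref{it-cpt}\eqv\ref{it-finconn}\imp\ref{it-Hr}\eqv\ref{it-locconn2}\eqv\ref{it-locacc}$ and $\ref{it-locconn}\imp\ref{it-compl}$.

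\emph{Failure of \ref{it-locconn} and \ref{it-compl}.} Because $\ref{it-locconn}\imp\ref{it-compl}$, it suffices to show that \ref{it-compl} fails; \ref{it-locconn} then fails by contraposition. Write $F=\R^2\setm\Om$; infinite connectivity means $F$ has infinitely many components. Fix a ball $B(w,R)\supset\Om$. Since $\R^2\setm B(w,R)\subset F$ is connected and unbounded, it lies in a single component of $F$, so every other component is bounded and contained in $\itoverline{B(w,R)}$. Picking a point $q_j$ in each of infinitely many distinct bounded components $C_j$ and passing to a subsequence, compactness gives $q_j\doverto q$ with $q\in F$ finite; discarding at most one index, we may assume $q_j\in C_j\ne C_0$, where $C_0$ is the component of $F$ containing $q$. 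Then $F$ is not locally connected at $q$: any connected neighbourhood $V$ of $q$ in $F$ must lie in $C_0$, and writing $V=W\cap F$ with $W\subset\R^2$ open and $q\in W$, openness forces $q_j\in W$ for all large $j$, whence $q_j\in W\cap F=V\subset C_0$, contradicting $q_j\in C_j$ with $C_j\cap C_0=\emptyset$. Hence no connected neighbourhood of $q$ in $F$ exists, so \ref{it-compl} fails.

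\emph{Equivalence of \ref{it-locacc}--\ref{it-cpt}.} By the chain above, everything except the implication $\ref{it-Hr}\imp\ref{it-finconn}$ is already known, and since $\ref{it-Hr}\eqv\ref{it-locacc}$ it suffices to prove that a countably connected $\Om$ satisfying \ref{it-locacc} (equivalently \ref{it-Hr}) is finitely connected at the boundary. Fix $x_0\in\bdy\Om$. As \ref{it-Hr} already gives $x_0\notin\itoverline{\Hrx{r}{x_0}}$ for every $r>0$, Proposition~\ref{prop1-fin} reduces the task to proving $N(r,x_0)<\infty$ for all $r>0$. This finiteness is precisely the Newman-type statement of Theorem~\ref{thm-newman-count-singleton} in the final section, and its proof is the main obstacle.

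The strategy I would follow for that finiteness statement is as follows. Suppose $N(r_0,x_0)=\infty$, so there are infinitely many components $\Gjrxo$, $j=1,2,\dots$, of $B(x_0,r_0)\cap\Om$ with $x_0\in\itoverline{\Gjrxo}$. In the plane these components inherit a cyclic order around $x_0$, and since \ref{it-Hr} forces $\Om$ to consist, near $x_0$, only of the $\Gjrxo$, between two cyclically consecutive components there must lie a piece of $F=\R^2\setm\Om$; these separating pieces accumulate at $x_0$. As $F$ has only countably many components, a pigeonhole-and-accumulation argument should produce a boundary point $q$ at which infinitely many components of $B(q,\rho)\cap\Om$ do not reach $q$, i.e.\ $q\in\itoverline{\Hrx{\rho}{q}}$, contradicting \ref{it-Hr} at $q$. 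The more general hypothesis --- in which $F$ may have uncountably many singleton components but only countably many continua $K_j$, with $\itoverline{E}\setm E$ a union of a compact set and an at most countable set for $E=\bigcup_j K_j$ --- is handled by the same scheme, using that singletons cannot separate $\R^2$ and so neither create new components of $B(x_0,r)\cap\Om$ nor furnish separating pieces, while the smallness of $\itoverline{E}\setm E$ controls how the $K_j$ cluster. The delicate planar book-keeping --- tracking the cyclic order and deciding which component of $F$ occupies which gap --- is the crux of the whole argument.
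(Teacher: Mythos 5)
Your first part (the failure of \ref{it-locconn} and \ref{it-compl}) is correct and is essentially the paper's own argument: pick one point in each of infinitely many bounded components of $\R^2\setm\Om$, extract an accumulation point $q\in\R^2\setm\Om$, and note that every relative neighbourhood of $q$ meets infinitely many distinct components, so no connected one exists. Your reduction of the second part is also the right one: by Theorem~\ref{thm-newman-gen} the whole equivalence of \ref{it-locacc}--\ref{it-cpt} hinges on the single implication \ref{it-locacc} $\imp$ \ref{it-finconn}, and indeed the paper itself defers exactly this implication to Theorem~\ref{thm-newman-count-singleton}.

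The gap is that you do not prove that implication; you only sketch a strategy, and the sketch is unsubstantiated precisely at the points where the real difficulty sits. First, infinitely many components of $B(x_0,r_0)\cap\Om$, all with $x_0$ in their closure, need not carry any well-defined cyclic order around $x_0$: they can interleave, nest and spiral, so ``cyclically consecutive components'' is not a meaningful notion without further work. Second, the ``piece of $F$'' lying between two such components need not be a component of $F$; in general it is a union of infinitely many components of $F$ together with limit points, and singleton components can never separate anything --- which is why the general hypothesis (countably many continua $K_j$ plus possibly uncountably many singletons, with $\itoverline{E}\setm E$ compact plus countable) cannot be dispatched ``by the same scheme'' but needs dimension theory and a genuine separation theorem. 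Third, your key step (``a pigeonhole-and-accumulation argument should produce a boundary point $q$ with $q\in\itoverline{\Hrx{\rho}{q}}$'') is asserted, not proved. The paper's actual route is quite different: fix $x_0$, pass to the simply connected domain $\Om'=\Sp^2\setm K$ where $K$ is the component of $\Sp^2\setm\Om$ containing $x_0$, verify that local accessibility transfers from $\Om$ to $\Om'$, apply Newman's theorem (Case~1 of Theorem~\ref{thm-newman}) to conclude $\Om'$ is finitely connected at $x_0$, and then show that each component $G$ of $\Om'\cap B_0$ with $x_0$ in its closure contains exactly one component of $\Om\cap B_0$ with $x_0$ in its closure. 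This last step requires proving that two points $z,w\in G\cap\Om\cap B(x_0,\de)$ cannot be separated by $(\Sp^2\setm G)\cup(\Sp^2\setm\Om)$, which is achieved via Janiszewski's theorem, the separation theorem for countable upper semicontinuous collections of disjoint compacta (Theorem~\ref{thm-comp-separ-122}, itself proved by a delicate contradiction argument showing the collection would otherwise form a countable infinite compact perfect Hausdorff space), and the fact that totally disconnected relatively closed sets cannot separate planar open sets. None of this machinery, nor any workable substitute for it, appears in your proposal, so the heart of the theorem remains unproved.
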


Recall that a \emph{continuum} $K \subset \R^2$ 
is an infinite connected compact set.
We postpone the proof of Theorem~\ref{thm-newman-R2} until after
the proofs of Theorems~\ref{thm-newman} and~\ref{thm-newman-gen} later
in this section.

We do not know whether \ref{it-locacc} $\imp$ \ref{it-cpt}
always hold if $\Om$ is an uncountably connected planar domain.
The following example shows that it is possible for the statements 
\ref{it-locacc}--\ref{it-cpt} to be true
even if $\Om$ is infinitely connected.

\begin{example}
Let $C \subset [0,1]$ be the ternary Cantor set and $C_j$ be the $j$th generation
generating $C$ (i.e.\ $C=\bigcap_{j=0}^\infty C_j$ and $C_j$ consists
of $2^j$ intervals each of length $3^{-j}$).
Let further
\begin{align*}
    K& = (C \times \{0\}) \cup \bigcup_{j=0}^\infty (C_j \times \{2^{-j}\}), \\
    \Om& = (-2,2)^2 \setm K \quad \text{and} \quad 
    \Om'=\Om\setm([-1,0]\times \{0\}).
\end{align*}
Then $\Om$ and $\Om'$ are both uncountably connected domains
and locally connected or $2$-connected at all their boundary points.
Thus they both satisfy \ref{it-locacc}--\ref{it-cpt}.
Theorem~\ref{thm-newman-R2} is however only applicable
to $\Om$, but not to $\Om'$ since in that case
$\itoverline{E}\setm E = (C\setm\{0\})\times\{0\}$ 
is not a union of a compact set  and a countable set.

By replacing the line segments by thin rectangles it is easy
to  modify $\Om$ and $\Om'$ so that they become
locally connected at the boundary.
\end{example}

\begin{openprob}
Is Theorem~\ref{thm-newman-R2} also true
when $\itoverline{E}\setm E$ is not a union of a compact set
and an at most countable set?
\end{openprob}

If there are uncountably many continuum components of $\R^2 \setm \Om$,
then  uncountably many of them
have diameter larger than $\de$, for some $\de>0$, in which
case it seems difficult to fulfill \ref{it-locacc}.
This can be formulated as the following question.

\begin{openprob}
Does \ref{it-locacc} always fail if there
are uncountably many continuum components of $\R^2 \setm \Om$?
In view of Theorem~\ref{thm-newman-gen} it would follow
that \ref{it-locacc}--\ref{it-cpt} 
all fail, and are thus equivalent.
\end{openprob}

We now turn to the proof of Theorem~\ref{thm-newman-gen}.

\begin{proof}[Proof of Theorem~\ref{thm-newman-gen}]
The implications 
 \ref{it-finconn}  $\imp$ \ref{it-Hr}
    $\eqv$ \ref{it-locconn2}
    $\eqv$ \ref{it-locacc}
and
 \ref{it-locconn} $\imp$ \ref{it-compl}
follow directly from the corresponding
pointwise results in Theorem~\ref{thm-newman-pt} below.
The equivalence of \ref{it-finconn} and \ref{it-cpt} is the content of Theorem~\ref{thm-clOmm-cpt}.

Counterexamples for the failing implications are given
as examples below, but we list them here.
Note that they are all given for $\Om \subset \R^3$
which is homeomorphic to a ball.

\ref{it-finconn} $\not\imp$ \ref{it-compl}
This follows from Example~\ref{ex-R3-finconn-compl-new}.

\ref{it-compl} $\not\imp$ \ref{it-locconn}
This follows from Example~\ref{ex-R3-compl-locconn}.

\ref{it-locconn} $\not\imp$ \ref{it-locacc}
This follows from Example~\ref{ex-R3-locconn-locacc}.

\ref{it-locacc} $\not\imp$ \ref{it-finconn}
This follows from Example~\ref{ex-Jana}.

The failure of the remaining implications follows 
from these counterexamples 
in combination with the implications obtained in the beginning of the proof.
\end{proof}

The implication \ref{it-finconn}
$\imp$ \ref{it-compl} is in general false.
The following example shows that
if $\Om \subset \R^3$,
then it is not enough to assume that
$\Om$ is homeomorphic to a ball.
Note that $\Om$ below is locally
connected at the boundary.

\begin{example} \label{ex-R3-finconn-compl-new}
Let $G \subset \R^2$ be given by
\[
    G := ((0,1) \times (-1,1))
         \setm \bigcup_{j=1}^\infty  \itoverline{B((2^{-j},0),2^{-j}/10)}.
\]
Further, let
\[
   \Om=((0,1) \times (-1,1)^2) \setm (G \times [0,1)).
\]
Then $\Om$ is locally connected at the boundary,
while $X \setm  \Om$ is not locally connected at $(0,0,0)$.
Note that $\Om$ is homeomorphic to a ball.
\end{example}

In $\R^3$ we have the following example
showing that \ref{it-compl} $\not\imp$ \ref{it-locconn}.
(It also shows that \ref{it-compl} $\not\imp$ \ref{it-locacc}.)

\begin{example} \label{ex-R3-compl-locconn}
Let
\[
    \Om=(0,1)^3
    \cup \bigcup_{j=1}^\infty ((2^{-2j-1},2^{-2j})^2 \times (0,2)).
\]
In this case $\R^3 \setm \Om$ is locally connected at every point, while
$\bdy \Om$ is not locally connected at the points in
$A=\{(0,0,z): 1 < z \le 2\}$.
Moreover the points in $A$ are not accessible from $\Om$.
Note that $\Om$ is homeomorphic to a ball.
\end{example}

The following example shows that
\ref{it-locconn} $\not\imp$ \ref{it-locacc} in $\R^3$.

\begin{example} \label{ex-R3-locconn-locacc}
Let $K \subset \R^2$ be given by 
\[
K=\bigcup_{j=1}^\infty(([-2^{-j},2^{-j}]\times\{0,2^{-j}\})\cup(\{-2^{-j},2^{-j}\}\times[0,2^{-j}])),
\]
and let $\Om \subset \R^3$ be given by
\[
    \Om=(-1,1)^3\setm (K\times[0,1)).
\]
Then $\bdy\Om$ is locally connected, 
but no point $(0,0,z)\in\bdy\Om$ is locally accessible from
$\Om$ when $0<z \le 1$. Moreover, $\Om$ is not finitely connected at these points.
Note that $\Om$ is homeomorphic to a ball.
\end{example}

By changing the shape of the removed set, we obtain the
following example which shows that
\ref{it-locacc} $\not\imp$ \ref{it-finconn} in $\R^3$.

\begin{example} \label{ex-Jana}
Let $K \subset \R^2$ be given by
\[
     K=\bigcup_{j=2}^\infty \bdy B((0,2^{-j}),2^{-j}),
\]
and let $\Om \subset \R^3$ be given by
\[
    \Om=(-1,1)^3 \setm (K \times [0,1)).
\]
Then any boundary point $x_0 \in \bdy \Om$ is
locally accessible from $\Om$,
but $\Om$ is not finitely connected at 
any boundary point $(0,0,z)$ with $0<z\le1$. 
Note that $\Om$ is homeomorphic to a ball.
\end{example}

\begin{proof}[Proof of Theorem~\ref{thm-newman}.]
    \ref{it-cpt} $\eqv$ \ref{it-finconn} $\imp$ \ref{it-Hr}
     $\eqv$ \ref{it-locconn2} $\eqv$ \ref{it-locacc}
and
   \ref{it-locconn} $\imp$ \ref{it-compl}
These implications follow from
Theorem~\ref{thm-newman-gen} 
(or more directly from Theorems~\ref{thm-clOmm-cpt} and~\ref{thm-newman-pt}).

\medskip

\emph{Case} 1. The special case when $\Om$ is simply connected.

\ref{it-locacc} $\imp$ \ref{it-cpt} This is proved in Newman~\cite[p.~183]{newman}
(but does not appear in the second edition of \cite{newman}).

\ref{it-locacc} $\eqv$ \ref{it-locconn} 
This is in Newman~\cite[p.~187, Theorem~12.2]{newman}
(but not in the second edition of \cite{newman}).

\ref{it-compl} $\imp$   \ref{it-locconn}
This is part of Pommerenke~\cite[p.~20, Theorem~2.1]{pommerenke}.

\medskip

\emph{Case} 2. The general case when $\Om$ is finitely connected,
i.e.~$\Sphere^2 \setm \Om$ has finitely many components.

Let $K$ be one of the components, which must be
at a positive
distance from the other components. 
Since the conditions \ref{it-locacc}--\ref{it-finconn},
\ref{it-locconn} and \ref{it-compl} are all local,
any of these conditions holds for $x \in K$ with respect
to $\Om$ if and only if it holds with respect to 
$\Om':=\Sphere^2 \setm K$.
As $\Om'$ is simply connected, and we have already
obtained the equivalence for the simply connected case,
we see that 
\ref{it-locacc}--\ref{it-finconn},
\ref{it-locconn} and \ref{it-compl}
all are equivalent
also
in the finitely connected case.
That \ref{it-cpt} is equivalent to the other statements
follows from Theorem~\ref{thm-clOmm-cpt}.
\end{proof}

\begin{proof}[Proof of Theorem~\ref{thm-newman-R2}]
Let us first show that
   \ref{it-locconn} and  \ref{it-compl} are false.
Since $\Om$ is bounded and infinitely connected,
$\R^2 \setm \Om$ consists of infinitely many bounded components (and one
unbounded component). Furthermore, we can choose one point from each of these
components to obtain a bounded infinite collection of points. This collection must
have an accumulation point $x \in \R^2$.
But then $\R^2 \setm \Om$ is not locally connected at $x$.
It also follows that  $\bdy \Om$ is not locally connected at $x$.

\ref{it-locacc} $\imp$ \ref{it-finconn} 
This implication is quite complicated to prove and we
 postpone it to the next section, where it is obtained
as Theorem~\ref{thm-newman-count-singleton}.

The remaining implications follow from Theorem~\ref{thm-newman-gen}.
\end{proof}

It is also of interest to consider the corresponding
pointwise statements.
Contrary to the global statements we need to consider the different
types of local connectedness.
The statement \ref{it-cpt} above does not have a local version,
instead we consider the statement \ref{it-locconn3-pt} below.

\begin{thm} \label{thm-newman-pt}
Assume that $X$ is a proper locally connected metric space
and that\/ $\Om \subset X$ is open.
Let $x_0 \in \bdy \Om$.
Consider the following statements\/\textup{:}
\begin{enumerate}
\renewcommand{\theenumi}{\textup{(\alph{enumi}$'$)}}%
\item \label{it-locacc-pt}
$x_0$
is locally accessible from\/ $\Om$\/\textup{;}
\setcounter{saveenumi}{\value{enumi}}
\stepcounter{saveenumi}
\setcounter{enumi}{0}
\renewcommand{\theenumi}{\textup{(\alph{saveenumi}\arabic{enumi}$'$)}}%
\item \label{it-locconn2-pt-lpc}
$\Om \cup \{x_0\}$ is locally pathconnected at $x_0$\/\textup{;}
\item \label{it-locconn2-pt-lc}
$\Om \cup \{x_0\}$ is locally connected at $x_0$\/\textup{;}
\item \label{it-locconn2-pt-pk}
$\Om \cup \{x_0\}$ is pathconnected im kleinen at $x_0$\/\textup{;}
\item \label{it-locconn2-pt-k}
$\Om \cup \{x_0\}$ is connected im kleinen at $x_0$\/\textup{;}
\renewcommand{\theenumi}{\textup{(\alph{enumi}$'$)}}%
\setcounter{enumi}{\value{saveenumi}}
\item \label{it-Hr-pt}
for all $r>0$ it is true that $x_0 \notin \itoverline{\Hr{r}}$\/\textup{;}
\item \label{it-finconn-pt}
$\Om$ is finitely connected at $x_0$\/\textup{;}
\item \label{it-locconn3-pt}
$\Om$ is locally connected at $x_0$\/\textup{;}
\setcounter{saveenumi}{\value{enumi}}
\stepcounter{saveenumi}
\setcounter{enumi}{0}
\renewcommand{\theenumi}{\textup{(\alph{saveenumi}\arabic{enumi}$'$)}}%
\item \label{it-locconn-pt-lpc}
$\bdy \Om$ is locally pathconnected at $x_0$\/\textup{;}
\item \label{it-locconn-pt-lc}
$\bdy \Om$ is locally connected at $x_0$\/\textup{;}
\item \label{it-locconn-pt-pk}
$\bdy \Om$ is pathconnected im kleinen at $x_0$\/\textup{;}
\item \label{it-locconn-pt-k}
$\bdy \Om$ is connected im kleinen at $x_0$\/\textup{;}
\stepcounter{saveenumi}
\setcounter{enumi}{0}
\item \label{it-compl-pt-lpc}
$X \setm \Om$ is locally pathconnected at $x_0$\/\textup{;}
\item \label{it-compl-pt-lc}
$X \setm \Om$ is locally connected at $x_0$\/\textup{;}
\item \label{it-compl-pt-pk}
$X \setm \Om$ is pathconnected im kleinen at $x_0$\/\textup{;}
\item \label{it-compl-pt-k}
$X \setm \Om$ is connected im kleinen at $x_0$.
\end{enumerate}
Then the following implications are true\/\textup{:}
\[
    \ref{it-locconn3-pt}
    \smallimp 
    \ref{it-finconn-pt}
    \smallimp \ref{it-Hr-pt}
    \smalleqv \ref{it-locconn2-pt-lpc}
    \smalleqv \ref{it-locconn2-pt-lc}
    \smalleqv \ref{it-locconn2-pt-pk}
    \smalleqv \ref{it-locconn2-pt-k}
    \smalleqv \ref{it-locacc-pt}
\]
and
\[
\xymatrix{
     \text{\ref{it-locconn-pt-lpc}} \ar@{=>}[rrr] \ar@{=>}[ddd] \ar@{=>}[rd]
          &&&  \text{\ref{it-locconn-pt-lc}}\ar@{=>}[ddd] \ar@{=>}[ld] \\
     & \text{\ref{it-compl-pt-lpc}} \ar@{=>}[d] \ar@{=>}[r]
             & \text{\ref{it-compl-pt-lc}} \ar@{=>}[d] \\
     & \text{\ref{it-compl-pt-pk}} \ar@{=>}[r] & \text{\ref{it-compl-pt-k}} \\
     \text{\ref{it-locconn-pt-pk}} \ar@{=>}[rrr] \ar@{=>}[ru]
       &&& \text{\ref{it-locconn-pt-k}.} \ar@{=>}[lu]
}
\]
No other implication is true even if we assume
that\/ $\Om$ is a simply connected subset of\/ $\R^2$.
\end{thm}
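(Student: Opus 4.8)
Write $\mathrm{Ext}=X\setm\itoverline{\Om}$ for the open exterior, so that $X\setm\Om=\bdy\Om\cup\mathrm{Ext}$ with $\partial\,\mathrm{Ext}\subset\bdy\Om$. The plan is to prove the two displayed implication patterns and then refute all remaining arrows by planar examples. Among \ref{it-locconn2-pt-lpc}--\ref{it-locconn2-pt-k} the implications are the pointwise instances of \eqref{eq-lc-trivial} applied to $\Om\cup\{x_0\}$, so it suffices to run the cycle \ref{it-locconn2-pt-k} $\Rightarrow$ \ref{it-Hr-pt} $\Rightarrow$ \ref{it-locacc-pt} $\Rightarrow$ \ref{it-locconn2-pt-lpc}; moreover \ref{it-locconn3-pt} $\Rightarrow$ \ref{it-finconn-pt} is immediate (being $1$-connected is being finitely connected) and \ref{it-finconn-pt} $\Rightarrow$ \ref{it-Hr-pt} is half of Proposition~\ref{prop1-fin}. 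For \ref{it-locconn2-pt-k} $\Rightarrow$ \ref{it-Hr-pt}, fix $r>0$ and a connected $A\subset B(x_0,r)$ with $x_0\in\interior A$ (interior in $\Om\cup\{x_0\}$), so $A\supset B(x_0,t)\cap(\Om\cup\{x_0\})$ for some $t$. The key observation is that $A$ cannot meet a component $C$ of $B(x_0,r)\cap\Om$ with $x_0\notin\itoverline{C}$: then $A\cap C$ would be open (as $C$ is open) and closed in $A$ (as $C$ is a whole component of $B(x_0,r)\cap\Om$ and $x_0\notin\itoverline{C}$), hence a nonempty proper clopen subset of the connected set $A$. Thus $B(x_0,t)\cap\Hr{r}=\emptyset$ and $x_0\notin\itoverline{\Hr{r}}$.

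\textbf{The analytic core.} The heart is \ref{it-Hr-pt} $\Rightarrow$ \ref{it-locacc-pt}, which I expect to be the main obstacle. Given $r>0$, pick $\de=s_0<r$ with $B(x_0,s_0)\cap\Hr{r}=\emptyset$; fix $x\in B(x_0,\de)\cap\Om$ and a geometric sequence $r=r_0>r_1>\dots\to 0$ together with radii $s_k\downarrow 0$, $s_k<r_k$, with $B(x_0,s_k)\cap\Hr{r_k}=\emptyset$ (available by \ref{it-Hr-pt}). Since $x\notin\Hr{r}$ it lies in a component $G_0$ of $B(x_0,r)\cap\Om$ with $x_0\in\itoverline{G_0}$; inductively, having reached a component $G_k$ of $B(x_0,r_k)\cap\Om$ with $x_0$ on its boundary, choose $x^{(k+1)}\in G_k\cap B(x_0,s_{k+1})$, which then lies in a component $G_{k+1}$ of $B(x_0,r_{k+1})\cap\Om$ again touching $x_0$. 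As $X$ is proper and locally connected it is locally pathconnected, so each $G_k$ is pathconnected and we join $x^{(k)}$ to $x^{(k+1)}$ by a curve inside $G_k\subset B(x_0,r_k)\cap\Om$. Concatenating these curves on $[0,1)$ and putting the value $x_0$ at $1$ gives a curve whose image lies in $B(x_0,r)\cap\Om$ off its endpoint and which converges to $x_0$ (since $r_k\to 0$), i.e.\ local accessibility. Finally \ref{it-locacc-pt} $\Rightarrow$ \ref{it-locconn2-pt-lpc}: the set $G$ of points of $\{x_0\}\cup(\Om\cap B(x_0,r))$ joinable to $x_0$ by a curve therein is pathconnected, lies in $B(x_0,r)$, and by accessibility contains $B(x_0,\de)\cap(\Om\cup\{x_0\})$, so it is a neighbourhood of $x_0$; it is open because $X$ is locally pathconnected at interior points while accessibility handles $x_0$.

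\textbf{The boundary--complement diagram.} The arrows within \ref{it-locconn-pt-lpc}--\ref{it-locconn-pt-k} and within \ref{it-compl-pt-lpc}--\ref{it-compl-pt-k} are \eqref{eq-lc-trivial} for $\bdy\Om$ and for $X\setm\Om$. For the four matched cross-arrows \ref{it-locconn-pt-lpc} $\Rightarrow$ \ref{it-compl-pt-lpc}, \dots, \ref{it-locconn-pt-k} $\Rightarrow$ \ref{it-compl-pt-k} the mechanism is uniform: given a witness $A\subset\bdy\Om\cap B(x_0,r)$ for $x_0$, use local connectedness of $X$ to pick a connected open $U\ni x_0$ with $U\subset B(x_0,r)$ and $\bdy\Om\cap U\subset A$, and adjoin the exterior, $W=A\cup(\mathrm{Ext}\cap U)$. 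Each component of $\mathrm{Ext}\cap U$ is a proper open subset of the connected set $U$, so its boundary relative to $U$ is nonempty and contained in $\bdy\Om\cap U\subset A$; hence $W$ is connected and contains the relatively open set $(X\setm\Om)\cap U\ni x_0$. This gives the connected-- and, using local pathconnectedness of $X$ to route paths through the adjoined exterior pieces, pathconnected-im-kleinen cross-arrows directly, and the ``locally (path)connected'' variants follow once one checks, again via local connectedness of $X$, that $W$ may be taken open; this last point is routine bookkeeping.

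\textbf{Sharpness.} It remains to see the listed arrows are the only ones, via simply connected planar domains. The slit disc (following Definition~\ref{deff-finite-conn}) satisfies \ref{it-finconn-pt} but not \ref{it-locconn3-pt}; a disc with infinitely many radial slits all meeting at $x_0$ satisfies \ref{it-Hr-pt} while $N(r)=\infty$, so \ref{it-Hr-pt} does not imply \ref{it-finconn-pt}; the epigraph of a topologist's sine curve is locally connected at its cusp although its boundary and complement there are not, separating the first scheme from \ref{it-locconn-pt-lc} and \ref{it-compl-pt-lc}; and a half-disc with slits accumulating at a smooth boundary point satisfies \ref{it-locconn-pt-lpc} yet fails \ref{it-Hr-pt}, separating the boundary conditions from the first scheme. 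The remaining pointwise gaps among the four local-connectedness notions for $\bdy\Om$ and for $X\setm\Om$, and between the $\bdy\Om$- and $(X\setm\Om)$-conditions, are realized by standard sine-curve-- and broom-type planar continua arranged as boundaries of simply connected domains. Checking these against the transitive closure of the proved arrows shows that no further implication holds.
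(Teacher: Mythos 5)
Your handling of the first chain is correct and genuinely different from the paper's: where the paper proves \ref{it-Hr-pt} $\Leftrightarrow$ \ref{it-locconn2-pt-lc} directly and gets \ref{it-locconn2-pt-k} $\Rightarrow$ \ref{it-locacc-pt} by citing Whyburn, you close the cycle self-containedly, via the clopen argument for \ref{it-locconn2-pt-k} $\Rightarrow$ \ref{it-Hr-pt} (components of $B(x_0,r)\cap\Om$ are relatively closed there, so a connected-im-kleinen witness cannot meet $\Hr{r}$) and via the nested-components construction for \ref{it-Hr-pt} $\Rightarrow$ \ref{it-locacc-pt} (choose $G_k\subset B(x_0,r_k)\cap\Om$ with $x_0\in\itoverline{G_k}$, join successive points by paths in $G_k$, concatenate); both steps are sound, and they buy independence from Whyburn's theorem at the cost of reproving its content. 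Your first two sharpness examples and the comb-type example for the second diagram not implying the first are also fine.

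There are, however, two genuine gaps. First, in the cross-arrows of the second diagram your mechanism proves only the connectedness variants: the fact that each component $C$ of the adjoined exterior satisfies $\itoverline{C}\cap A\neq\emptyset$ gives connectedness of $W$ via Lemma~\ref{lem-connected}, but the pathconnected analogue is false --- an open connected $C$ with $\itoverline{C}$ meeting $A$ need not admit \emph{any} path from $C$ into $A$ (take $C$ to be a strip around a topologist's sine graph shrinking toward the limit segment). So ``route paths through the adjoined exterior pieces'' does not close \ref{it-locconn-pt-lpc} $\Rightarrow$ \ref{it-compl-pt-lpc} or \ref{it-locconn-pt-pk} $\Rightarrow$ \ref{it-compl-pt-pk} as stated. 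The missing idea --- which is the actual content of the paper's proof --- is to join an exterior point to $x_0$ by a path inside the pathconnected open set $U$ itself, truncate that path at its \emph{first} hit of the closed set $\bdy\Om$ (before which it cannot have entered $\Om$, by a connectedness argument), observe that the hitting point lies in $\bdy\Om\cap U\subset A$, and reroute inside $A$; this is more than routine bookkeeping. Second, the sharpness claim is not discharged. The separations internal to the second diagram are only gestured at as ``standard'': \ref{it-compl-pt-lpc} $\not\Rightarrow$ \ref{it-locconn-pt-k} (the paper's Example~\ref{ex-thick-comb}), \ref{it-locconn-pt-pk} $\not\Rightarrow$ \ref{it-locconn-pt-lc} and \ref{it-compl-pt-pk} $\not\Rightarrow$ \ref{it-compl-pt-lc} (Example~\ref{ex-Rempe}), and \ref{it-locconn-pt-lc} $\not\Rightarrow$ \ref{it-locconn-pt-pk} and \ref{it-compl-pt-lc} $\not\Rightarrow$ \ref{it-compl-pt-pk} (Example~\ref{ex-iterated-top-sine}); the last requires a nontrivial iterated construction with $\Om$ simply connected. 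Moreover, your sine-epigraph example is invoked only for failure of \emph{local connectedness} of $\bdy\Om$ and $X\setm\Om$, but to exclude all implications from the first scheme into the second diagram you must verify failure of the weakest node \ref{it-compl-pt-k}, i.e.\ connectedness \emph{im kleinen} of $X\setm\Om$ (the example does satisfy this, but it must be checked). Without these constructions the assertion that no further implication holds remains unproven.
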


\begin{proof}[Proof of Theorem~\ref{thm-newman-pt}]

\ref{it-locacc-pt}  $\imp$  \ref{it-locconn2-pt-lpc}
 If $x_0$ is locally accessible from $\Om$, then 
for every $r>0$ there exists a positive number $\delta$ such that each 
$y\in B(x_0,\delta)\cap\Om$ can
be connected by a curve $\gamma_y$ in $B(x_0,r)\cap\Om$ to $x_0$. 
If $y_1,y_2\in B(x_0,\delta)\cap\Om$,
then the concatenation (at $x_0$) of the two curves $\gamma_{y_1}$ and $\gamma_{y_2}$
gives a curve in $B(x_0,r)\cap(\Om\cup\{x_0\})$ connecting $y_1$ with $y_2$. 
It follows that 
the pathcomponent $x_0$ belongs to in $B(x_0,r)\cap(\Om\cup\{x_0\})$ contains 
$B(x_0,\delta)\cap(\Om\cup\{x_0\})$.
By the local pathconnectedness of $X$, it now follows that this pathcomponent is a
relatively open subset of $\Om\cup\{x_0\}$, whence we know that 
$\Om \cup \{x_0\}$ is
locally pathconnected at $x_0$. 

\ref{it-locconn2-pt-lpc}
$\imp$ \ref{it-locconn2-pt-lc}
$\imp$ \ref{it-locconn2-pt-k} and
\ref{it-locconn2-pt-lpc}
$\imp$ \ref{it-locconn2-pt-pk}
$\imp$ \ref{it-locconn2-pt-k}
These implications follow from \eqref{eq-lc-trivial}.

\ref{it-locconn2-pt-k} $\imp$ \ref{it-locacc-pt}
This follows from Whyburn~\cite[p.~111, Theorem~4.1]{whyburn42}
(which should be applied to the component of $X$ containing $x_0$).
Observe that Whyburn's definition of local connectedness is the same
as our definition of connectedness im kleinen. 

$\neg$ \ref{it-Hr-pt} $\imp$ $\neg$ \ref{it-locconn2-pt-lc}
Let $r>0$ be such that  $x_0 \in \itoverline{\Hr{r}}$,
and let $G \subset B(x_0,r)$
be a neighbourhood of $x_0$ in the relative topology of $\Om \cup \{x_0\}$.
Then $G \cap \Hr{r}$ must have infinitely many components,
none of which has $x_0$ in its boundary.
Each of these components is also a component in $G$, which
is thus not connected. 
Hence
we have shown that $B(x_0,r)\cap(\Om\cup\{x_0\})$
does not have a (relatively) open connected subset containing $x_0$.

\ref{it-Hr-pt} $\imp$ \ref{it-locconn2-pt-lc}
Fix $s>0$. Then by hypothesis $x_0\notin \itoverline{\Hs}$ and hence there
is $0<t<s$ such that $B(x_0,t) \cap \Hs = \emptyset$.
Let $G=\bigcup_{j=1}^{N(s)} \Gjs$
(where $N(s)$ is either a positive integer or $\infty$).
Then
\[
    B(x_0,t) \cap \Om \subset G \subset
    B(x_0,s) \cap \Om.
\]
So $G'=G\cup\{x_0\}$ is a neighbourhood of $x_0$ in
$\Om \cup \{x_0\}$. We now show that $G'$ is  connected.
Since $x_0$ is in the boundary of each $\Gjs$ and $\Gjs$ is 
connected, it follows that $\Gjs\cup\{x_0\}$ is connected. 
As $G'$ is the
union of a collection of connected sets, whose intersection contains $x_0$,
we see that $G'$ is connected as well,
see e.g.\ Lemma~\ref{lem-connected}.

 \ref{it-finconn-pt} $\imp$ \ref{it-Hr-pt} 
This follows from Proposition~\ref{prop1-fin}.

\ref{it-locconn3-pt} $\imp$ \ref{it-finconn-pt}
This follows directly from the definition,
and we have thus completed the proof of the first set of 
relations claimed in this theorem.

The implications
\ref{it-locconn-pt-lpc}
$\imp$ \ref{it-locconn-pt-lc}
$\imp$ \ref{it-locconn-pt-k},
\ref{it-locconn-pt-lpc}
$\imp$ \ref{it-locconn-pt-pk}
$\imp$ \ref{it-locconn-pt-k},
\ref{it-compl-pt-lpc}
$\imp$ \ref{it-compl-pt-lc}
$\imp$ \ref{it-compl-pt-k}
and
\ref{it-compl-pt-lpc}
$\imp$ \ref{it-compl-pt-pk}
$\imp$ \ref{it-compl-pt-k}
follow from \eqref{eq-lc-trivial}.

\ref{it-locconn-pt-lpc} $\imp$ \ref{it-compl-pt-lpc}
Let $r>0$. Then there is an open  set $G$, with $x_0\in G \subset B(x_0,r)$, 
such that $G \cap \bdy \Om$ is pathconnected.
Let $\Gt$ be the pathcomponent of $G$ containing $G \cap \bdy \Om$.
As $X$ is locally connected, $\Gt$ is open and moreover contains $x_0$.
Let $F= \Gt \setm \Om$.
We now show that $F$ is pathconnected.
Let $a,b\in F$.
Then there is a path $\gamma$ in $\Gt$ connecting $a$ to $b$. If
$\gamma$ does not intersect $\Om$, then $\gamma$ lies in $F$, and so there is a 
path in $F$ connecting $a$ to $b$. If $\gamma$ intersects $\Om$, then it intersects
$\bdy\Om$. 
Let $z$ and $w$ be the first and last time $\gamma$ 
intersects the closed set $\bdy\Om$. 
Then $z,w\in\Gt\cap\bdy\Om=F\cap\bdy\Om$, and
so by the pathconnectedness of $G\cap\bdy\Om=\Gt\cap\bdy\Om$ 
there is a path $\beta$ connecting
$z$ to $w$ in $\Gt\cap\bdy\Om\subset F$. 
Let $\gamma_z$ and $\gamma_w$ be the 
subcurves of $\gamma$ that lie in $F$ and connect $a$ to $z$ and  $b$ to $w$ 
respectively.
The concatenation of $\gamma_z$, $\beta$ and $\gamma_w$ is a path in $F$ connecting
$a$ to $b$. Hence $F$ is pathconnected and relatively open in $X\setm\Om$,
and so $X \setm \Om$ is
locally pathconnected at $x_0 \in \bdy \Om$.

\ref{it-locconn-pt-lc} $\imp$ \ref{it-compl-pt-lc}
Let $r>0$. Then there is an open  set $G$, with $x_0\in G \subset B(x_0,r)$, 
such that $G \cap \bdy \Om$ is connected.
Let $\Gt$ be the component of $G$ containing $G \cap \bdy \Om$.
As $X$ is locally connected, $\Gt$ is open and moreover contains $x_0$.
Let $F= \Gt \setm \Om$. We now show that $F$ is connected. Suppose that
$F=A_1 \cup A_2$, where $A_1$ and $A_2$
are disjoint open sets (in the relative topology of $F$).
Let $F_1=A_1 \cap \bdy \Om$ and  $F_2=A_2 \cap \bdy \Om$.
Then 
\[
     F_1 \cup F_2 = \Gt \cap \bdy \Om = G \cap \bdy \Om.
\]
Moreover $F_1$ and $F_2$ are disjoint and open in 
the relative topology of 
$F \cap \bdy \Om = G \cap \bdy \Om$
which is connected.
Hence one of them must be empty, say $F_2 = \emptyset$.
It follows that $A_2$ and
$A_1':=A_1 \cup (\Om \cap \Gt)$ must be open in the original topology of $X$.
As $A_1'$ and $A_2$ are open and disjoint while
$\Gt=A_1'\cup A_2$ is connected, one of them
must be empty. Since $x_0\in F_1\subset A_1'$, we have $A_2$  empty. 
Thus $F$ is connected and $X \setm \Om$ is locally
connected at $x_0 \in \bdy \Om$.

\ref{it-locconn-pt-pk} $\imp$ \ref{it-compl-pt-pk}
In this case, for each $r>0$ there is a
set $E$ and $\delta>0$ such that $B(x_0,\delta)\subset E\subset B(x_0,r)$ and 
$E\cap\bdy\Om$ is pathconnected. As before, let $\Et\subset X$ be the pathcomponent
of $E$ containing $x_0$ (and hence $E\cap\bdy\Om$), and let $F=\Et\setm\Om$.
As in the proof of \ref{it-locconn-pt-lpc} $\imp$ \ref{it-compl-pt-lpc}, 
we see that $F$ is
pathconnected. 
Since $X$ is locally pathconnected, we see that $\Et$ contains a neighbourhood
of $x_0$, and 
thus $F$ contains a 
neighbourhood of $x_0$ (in the relative topology of $X\setm\Om$). 
Hence $X \setm \Om$ 
is pathconnected im kleinen at $x_0$.

\ref{it-locconn-pt-k} $\imp$ \ref{it-compl-pt-k} 
The proof of this implication is very similar
to the proof of \ref{it-locconn-pt-pk} $\imp$ \ref{it-compl-pt-pk};
just drop the prefix \emph{path} (from all occurrences) and appeal to 
\ref{it-locconn-pt-lc} $\imp$ \ref{it-compl-pt-lc}
instead of \ref{it-locconn-pt-lpc} $\imp$ \ref{it-compl-pt-lpc}.

Let us now turn to the counterexamples for the other implications.
We present most of them as examples below, but list them here.
Note that all counterexamples are with simply connected
$\Om \subset \R^2$.

\ref{it-locacc-pt} $\not\imp$ \ref{it-finconn-pt}
This follows from Example~\ref{ex1}.

\ref{it-finconn-pt} $\not\imp$ \ref{it-locconn3-pt}
This follows from Example~\ref{ex-slit-disc}.

\ref{it-locconn3-pt} $\not\imp$ \ref{it-compl-pt-k}
This follows from Example~\ref{ex-comb} with $x_0=x_1$.

\ref{it-locconn-pt-lpc} $\not\imp$ \ref{it-locconn2-pt-lc}
This follows from Example~\ref{ex-comb} with $x_0=x_2$.

\ref{it-compl-pt-lpc} $\not\imp$ \ref{it-locconn-pt-k}
This follows from Example~\ref{ex-thick-comb}.

\ref{it-locconn-pt-pk} $\not\imp$ \ref{it-locconn-pt-lc}
and
\ref{it-compl-pt-pk} $\not\imp$ \ref{it-compl-pt-lc}
This follows from Example~\ref{ex-Rempe}.

\ref{it-locconn-pt-lc} $\not\imp$ \ref{it-locconn-pt-pk}
and
\ref{it-compl-pt-lc} $\not\imp$ \ref{it-compl-pt-pk}
This follows from
Example~\ref{ex-iterated-top-sine}.

The failure of the remaining implications follows form the above examples 
in combination with the implications obtained in the beginning of the proof.
\end{proof}

\begin{example} \label{ex1}
Here we use complex notation.
Let $\Om \subset \R^2$ be given by
\[
    \Om :=(0,2)^2
         \setm \bigcup_{j=1}^\infty
    \{re^{i/j}: 0 \le r\le 1\}.
\]
Then $(0,0)$ is locally accessible from $\Om$,
while $\Om$ is not finitely connected at $0$.
Note that $\Om$ is simply connected.
\end{example}

\begin{example} \label{ex-slit-disc}
Let $\Om$ be the \emph{slit disc} $B((0,0),1) \setm ((-1,0]\times \{0\})
\subset  \R^2$. 
Then $\Om$ is finitely connected at the boundary but not locally connected
at the boundary.
Note that $\Om$ is simply connected.
\end{example}

\begin{example} \label{ex-comb} (The topologist's comb II)
Let $\Om \subset \R^2$ be given by
\[
    \Om :=(0,2)^2
         \setm \bigl(\bigl\{1,\tfrac{1}{2},\tfrac{1}{3},
         \ldots,\bigr\}
    \times (0,1]\bigr),
\]
$x_1=(0,1)$ and $x_2=(0,0)$.
Then $\Om$ is locally connected at $x_1$,
while $X \setm \Om$ is not connected im kleinen at $x_1$.
Moreover, $\Om\cup\{x_2\}$ is not locally connected at $x_2$,
while $\bdy \Om$ is locally pathconnected at $x_2$.
Note that $\Om$ is simply connected.
\end{example}

\begin{example} \label{ex-thick-comb}
Let $\Om \subset \R^2$ be given by
\[
    \Om :=(0,2)^2
         \setm \biggl( (0,1] \times \bigcup_{j=1}^\infty [2^{-2j-1},2^{-2j}]
    \biggr).
\]
Then $X \setm \Om$ is locally pathconnected at $(0,0)$,
while $\bdy \Om$ is not connected im kleinen at $(0,0)$.
Note that $\Om$ is simply connected.
\end{example}

\begin{example} \label{ex-Rempe}
Let $L(a,b)$ be the closed line segment between $a$ and $b$ in $\R^2$.
Let
\[
    E=L((0,0),(1,0)) \cup\bigcup_{j,k=0}^\infty L((2^{-j-1},2^{-j-k}),(2^{-j},0))
    \quad \text{and} \quad \Om=[-1,1]^2  \setm E.
\]
Then $\bdy\Om$ and $X \setm \Om$ are pathconnected im kleinen at $(0,0)$,
while neither of them is locally connected at $(0,0)$.
Note that $\Om$ is simply connected and its boundary
consists of a chain of "brooms" near the origin. 
This example can, e.g., be found in  
Munkres~\cite[Figure~25.1]{munkres2}.
\end{example}

\begin{example} \label{ex-iterated-top-sine}
For $k=1,2,\ldots,$ let
\begin{align*}
     E_k & =     \{(x,4^{-k}
       \sin (\pi/(4^kx-1))) : 4^{-k}<x \le 2 \cdot 4^{-k}\},
     \\
    F & =\{(0,0)\} \cup \bigcup_{j=1}^\infty \bigl(E_j
    \cup \bigl(\bigl[\tfrac{1}{2}\cdot 4^{-j},4^{-j}] \times \{0\}\bigr)
    \cup (\{4^{-j}\} \times [-4^{-j},4^{-j}])\bigr), \\
    \Om & = [-1,1]^2 \setm F.
\end{align*}
Then $\bdy\Om$ and $X \setm \Om$ are locally connected  at $(0,0)$,
while neither of them is pathconnected im kleinen at $(0,0)$.
Note that $\Om$ is simply connected.
\end{example}

\section{Countably connected planar domains}
\label{sect-count-connect}

In this section, we shall prove the implication 
\ref{it-locacc} $\imp$ \ref{it-finconn} 
(local accessibility of boundary points from $\Om$
implies finite connectedness of $\Om$ at the boundary)
of Theorem~\ref{thm-newman-R2},
viz.\ Theorem~\ref{thm-newman-count-singleton}.

Note that Example~\ref{ex-Jana} shows that the implication
\ref{it-locacc} $\imp$ \ref{it-finconn} can fail even if $\Om\subset\R^3$
is homeomorphic to a ball.
Thus, the arguments in this section cannot be 
 applied to the  higher-dimensional case.

The following general topological lemma provides us with a sufficient condition for 
connectedness of unions of connected sets,  and in fact we have already used a 
special case  of
it in the proof of Theorem~\ref{thm-newman-pt}.

\begin{lem} \label{lem-connected}
Let $E_0$ and $E_\al$, $\al\in A$, be connected sets.
Assume that  $E_0\cap\itoverline{E}_\al$
is nonempty for every $\al\in A$.
Then $E=E_0\cup\bigcup_{\al\in A}E_\al$ is connected.
\end{lem}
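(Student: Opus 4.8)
The plan is to show that $E$ is connected by a standard separation argument, relying on the fact that the closure $\itoverline{E}_\al$ of each $E_\al$ meets $E_0$, even if $E_\al$ itself need not meet $E_0$. First I would suppose for contradiction that $E$ is disconnected, so that there exist open sets $U, V \subset X$ with $E \subset U \cup V$, $E \cap U$ and $E \cap V$ both nonempty, and $E \cap U \cap V = \emptyset$. The key observation is that $E_0$, being connected, must lie entirely in one of the two open sets; without loss of generality I would assume $E_0 \subset U$, i.e.\ $E_0 \cap V = \emptyset$.

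The main step is then to prove that each $E_\al$ also lies entirely in $U$, which would force $E \subset U$ and contradict $E \cap V \neq \emptyset$. Here I would use the hypothesis that $E_0 \cap \itoverline{E}_\al \neq \emptyset$: pick a point $z \in E_0 \cap \itoverline{E}_\al$. Since $z \in E_0 \subset U$ and $U$ is open, $U$ is a neighbourhood of $z$; and since $z \in \itoverline{E}_\al$, every neighbourhood of $z$ meets $E_\al$, so $E_\al \cap U \neq \emptyset$. On the other hand, $E_\al$ is connected and contained in $U \cup V$ with $E_\al \cap U \cap V = \emptyset$, so $E_\al$ lies entirely in one of $U$ or $V$; as it meets $U$, we get $E_\al \subset U$.

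Combining these, I would conclude $E = E_0 \cup \bigcup_{\al \in A} E_\al \subset U$, whence $E \cap V = \emptyset$, contradicting the assumption that $E \cap V$ is nonempty. Therefore no such separation exists and $E$ is connected.

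I do not expect a serious obstacle here; this is a routine point-set topology argument. The one subtlety worth handling carefully is that the hypothesis only gives $E_0 \cap \itoverline{E}_\al \neq \emptyset$ rather than $E_0 \cap E_\al \neq \emptyset$, so the argument must invoke the characterization of closure points via neighbourhoods (every neighbourhood of a closure point of $E_\al$ meets $E_\al$) rather than a direct intersection. It is also cleanest to phrase the separation in terms of open subsets $U,V$ of the ambient space $X$ restricting to a separation of $E$, so that the openness of $U$ can be used at the closure point $z$; alternatively one could work entirely in the subspace $E$ with relatively open sets, but then one must remember that $\itoverline{E}_\al$ refers to the closure in $X$, so taking neighbourhoods in $X$ is the more transparent route.
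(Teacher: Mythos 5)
Your proof is correct, but it follows a genuinely different (equally elementary) route from the paper's. You argue by contradiction: assuming a separation of $E$ by ambient open sets $U,V$, you place the connected set $E_0$ in $U$, and then use the neighbourhood characterization of closure points to force each $E_\al$ to meet $U$, hence to lie in $U$, contradicting $E\cap V\neq\emptyset$. The paper instead argues directly via connected components: it fixes $x\in E_0$, lets $F$ be the component of $E$ containing $x$ (so $E_0\subset F$), and observes that the closure of $E_\al$ relative to $E$ is connected and contains a point of $E_0$, hence lies in $F$; since $\al$ was arbitrary, $E=F$. Both proofs turn on the same key point you flagged — that $E_0\cap\itoverline{E}_\al\neq\emptyset$ suffices even though $E_0\cap E_\al$ may be empty — but they package it differently: your argument needs only the definition of connectedness together with the fact that a neighbourhood of a closure point meets the set, whereas the paper's uses two standard facts (the closure of a connected set is connected; a connected set meeting a component is contained in it) and is direct rather than by contradiction. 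Neither approach is more general; both are valid in arbitrary topological spaces, and your care in taking closures in $X$ while separating with ambient open sets is exactly the right way to avoid the one pitfall in the subspace formulation.
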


This lemma can be found in Moore~\cite[Theorem~I.30, p.\ 11]{moore62}, 
but in the spirit of 
a discussion using modern definitions, we provide a 
proof of the lemma below.

We alert the reader that the terminology in Moore~\cite{moore62} is
somewhat nonstandard. 
In particular, by compact sets \cite{moore62} means sequentially precompact sets 
(which for $\R^2$ reduces to bounded sets),
and a continuum is a closed connected set 
(not necessarily compact nor nondegenerate). 
We will not use such archaic terminology.

\begin{proof} 
Let $x \in E_0$ and let $F$ be the component of $E$ containing $x$.
As $E_0$ is connected we must have $E_0 \subset F$.
Next, pick an arbitrary $\alp \in A$.
Since $E_\alp$ is connected, its closure relative to $E$
is also connected. 
This closure contains a point $z \in E_0$ and hence must be contained
in  the component of $z$, which is $F$.
In particular $E_\alp \subset F$.
Since $\alp$ was arbitrary we see that $E=F$ and hence $E$ is connected.
\end{proof}

In order to obtain our results in this section we
deal with upper semicontinuous
collections, which we now define.

\begin{deff}
Let $\G$ be a collection of pairwise disjoint compact subsets of $\Sp^2$.
Then $\G$ is \emph{upper semicontinuous} if for every sequence 
$\{K_j\}_{j=1}^\infty \subset\G$ and every set $K \in \G$ it is true
that if  $x_j,y_j\in K_j$,
$j=1,2,\ldots$, are two sequences such that $x_j\to x$ as $j\to\infty$
for some $x \in K$, 
then there exists a subsequence $\{y_{j_k}\}_{k=1}^\infty$ of $\{y_j\}_{j=1}^\infty$
and some $y \in K$ such that $y_{j_k}\to y$ as $k \to \infty$.
\end{deff}

We will need the following sufficient conditions for upper semicontinuous
collections.

\begin{thm}   \label{thm-comp-usc}
The following are true\/\textup{:}
\begin{enumerate}
\item  \label{it-comp-usc-a}
The components of a compact subset $F$ of\/ $\Sp^2$
 form an upper semicontinuous collection.
\item \label{it-comp-usc-b}
A subcollection of an upper semicontinuous collection is 
upper semicontinuous.
\item \label{it-comp-usc-c}
 If $\mathcal{G}$ is an upper semicontinuous collection 
and $K'$ is a compact set 
such that  
\[
K'\cap\bigcup_{K\in\mathcal{G}} K=\emptyset 
\quad \text{and} \quad  
K'\cup\bigcup_{K\in\mathcal{G}} K \quad \text{is compact,}
\]
then $\mathcal{G}\cup\{K'\}$
is upper semicontinuous.
\end{enumerate}
\end{thm}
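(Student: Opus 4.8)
The plan is to handle the three parts in turn, using throughout the classical fact that in a compact metric space (here a compact subset of $\Sp^2$) the components coincide with the quasicomponents, so that any two distinct components can be separated by a partition of the ambient compact set into two disjoint relatively clopen pieces. For \ref{it-comp-usc-a}, I would first observe that the components of $F$ are closed in $F$, hence compact, and pairwise disjoint, so they form an admissible collection. Given components $K_j$ and $K$, points $x_j,y_j\in K_j$, and $x_j\to x\in K$, I would extract (by compactness of $F$) a subsequence $y_{j_k}\to y\in F$ and argue by contradiction: if $y\notin K$, then $y$ lies in a component different from $K$, so there is a partition $F=A\cup B$ into disjoint relatively clopen sets with $x\in A$ and $y\in B$. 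Since $A$ and $B$ are relatively open, $x_{j_k}\in A$ and $y_{j_k}\in B$ for large $k$; but $K_{j_k}$ is connected and meets $A$, so $K_{j_k}\subset A$ and hence $y_{j_k}\in A$, contradicting $y_{j_k}\in B$. Therefore $y\in K$.

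Part \ref{it-comp-usc-b} is then immediate: compactness and pairwise disjointness pass to any subcollection $\mathcal{G}'\subset\mathcal{G}$, and the defining implication for $\mathcal{G}'$ is a special case of that for $\mathcal{G}$, since all the sequences and the target set lie in $\mathcal{G}'\subset\mathcal{G}$ and the limit point furnished by $\mathcal{G}$ lands in the target, which belongs to $\mathcal{G}'$. For \ref{it-comp-usc-c}, the collection $\mathcal{G}\cup\{K'\}$ again consists of pairwise disjoint compact sets, as $K'$ is compact and disjoint from every $K\in\mathcal{G}$. Given $\{K_j\}\subset\mathcal{G}\cup\{K'\}$, a target $L$, and $x_j,y_j\in K_j$ with $x_j\to x\in L$, I would pass to a subsequence along which either all $K_j=K'$ or all $K_j\in\mathcal{G}$. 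If all $K_j=K'$, then $x\in K'$ by closedness, forcing $L=K'$ by disjointness, and the compactness of $K'$ delivers a subsequence of $\{y_j\}$ converging in $L$. If all $K_j\in\mathcal{G}$ and $L\in\mathcal{G}$, the conclusion is precisely the upper semicontinuity of $\mathcal{G}$.

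The crucial case of \ref{it-comp-usc-c} is all $K_j\in\mathcal{G}$ with $L=K'$. Here I would use the compactness of $K'\cup\bigcup_{K\in\mathcal{G}}K$ to extract $y_{j_k}\to y$ in this set. If $y\notin K'$, then $y$ belongs to some $K_0\in\mathcal{G}$, and I would apply the upper semicontinuity of $\mathcal{G}$ \emph{with the two point-sequences interchanged}: to $\{K_{j_k}\}\subset\mathcal{G}$, target $K_0$, and the sequences $y_{j_k}\to y\in K_0$ together with the companion points $x_{j_k}$, obtaining a subsequence of $\{x_{j_k}\}$ converging to a point of $K_0$. Since $x_{j_k}\to x\in K'$, that limit equals $x\in K'\cap K_0=\emptyset$, a contradiction; hence $y\in K'=L$. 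The main obstacle is thus twofold: in \ref{it-comp-usc-a} everything rests on separating distinct components of a compact set by relatively clopen sets, while in \ref{it-comp-usc-c} the decisive device is this reversal of the roles of $x$ and $y$ in the upper semicontinuity of $\mathcal{G}$, which is exactly what prevents a limit point of $\{y_j\}$ from escaping into $\bigcup_{K\in\mathcal{G}}K$.
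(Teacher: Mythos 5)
Your proposal is correct. Parts \ref{it-comp-usc-b} and \ref{it-comp-usc-c} run essentially parallel to the paper's own proof: in particular, your interchange of the roles of the two point-sequences when applying the upper semicontinuity of $\mathcal{G}$ (to rule out the limit of $\{y_{j_k}\}$ landing in some $K_0\in\mathcal{G}$) is exactly the device the paper uses, and your explicit case analysis (sequence in $\mathcal{G}$ versus sequence equal to $K'$, target in $\mathcal{G}$ versus target $K'$) is if anything more complete than the paper's brief treatment of the non-crucial cases. Where you genuinely diverge is part \ref{it-comp-usc-a}. The paper invokes compactness of the space of compact subsets of $\Sp^2$ in the Hausdorff metric (Blaschke selection, cited from Bridson--Haefliger): it extracts a Hausdorff limit $L$ of the $K_j$, proves $L$ is connected, notes $x\in L\cap K$ so that $L\cup K$ is connected by Lemma~\ref{lem-connected}, concludes $L\subset K$ by maximality of components, and then places the limit of the $y_j$ inside $L\subset K$. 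You instead rely on the classical fact that components coincide with quasicomponents in a compact metric space, so that $K$ can be split off from any other component of $F$ by a partition $F=A\cup B$ into disjoint relatively clopen sets; connectedness of $K_{j_k}$ then forces $K_{j_k}\subset A$ once $x_{j_k}\in A$, contradicting $y_{j_k}\in B$. Both routes rest on one nontrivial classical ingredient (Blaschke selection versus component $=$ quasicomponent), but yours avoids Hausdorff convergence entirely and never needs to construct the limit continuum, which makes it shorter and purely point-set; the paper's construction of an explicit limit set is in the same spirit as the one-sided Hausdorff convergence it sets up later in the proof of Theorem~\ref{thm-comp-separ-122}, so its choice also serves as a warm-up for that argument.
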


Part \ref{it-comp-usc-a} 
can be found in Moore~\cite[Theorem~V.1.20, p.\ 284]{moore62},
but we include a proof here using modern terminology.

\begin{proof}
We will first prove \ref{it-comp-usc-a}. To do so
we employ the fact that an infinite sequence of compact 
subsets of $\Sp^2$
has a subsequence that converges in the Hausdorff topology to a 
compact set, see e.g.\ Lemma 5.31 in Bridson--Haefliger~\cite{BH}.

Let $\{K_j\}_{j=1}^\infty \subset\G$ and $K \in \G$,
where $\G$ is the collection of the components of $F$.
Next, let $x_j,y_j\in K_j$,
$j=1,2,\ldots$, be two sequences such that $x_j\to x$ as $j\to\infty$, 
for some $x \in K$.

By passing to a subsequence if necessary, we may assume that $K_j\to L$ in the 
Hausdorff topology for some compact set $L$. We now show that $L$ has to be connected. 
If $L$ is not connected, then there are two open sets $U$ and $V$ such that $L\subset U\cup V$, $L\cap U$ and
$L\cap V$ are nonempty but $L\cap U\cap V$ is empty. Since $L$ is compact and $U$ 
and $V$
are open, it follows that $L\setm V=L\cap U=:A$ and $L\setm U=L\cap V=:B$ are compact and
disjoint. Hence there is some $\eps>0$ such that the $\eps$-neighbourhoods
of $A$ and 
$B$ are disjoint. On the other hand, since $K_j\to L$ in the Hausdorff topology, 
for sufficiently large $j$ we have that $K_j$ is contained in the $\eps$-neighbourhood of 
$L=A\cup B$, which violates the fact that $K_j$ is connected. Thus we conclude that $L$ is also connected.

Now we have $x\in L\cap K$, which as a consequence means that $L\cup K$ is also connected, by Lemma~\ref{lem-connected}.
Since $K$ is a component of $F$ and $L\subset F$ (because $F$ is closed),
it follows that $L\subset K$.
On the other hand, by passing to a further subsequence if necessary and 
using that $F$ is compact
we may assume that $y_j\to y$ for some $y\in F$. 
Thus $y\in L \subset K$
showing that the collection is upper semicontinous.

The claim \ref{it-comp-usc-b} is easily seen to be true.

To prove \ref{it-comp-usc-c}, note that the upper semicontinuity condition is satisfied if the "limiting set" 
$K\in\mathcal{G}$. 
Now suppose that $K_j$, $j=1,2,\cdots,$ 
are sets from the collection $\mathcal{G}$
and that for each $j$ there is a point $x_j\in K_j$ such that 
$x_j\to x_\infty\in K'$. If $y_j\in K_j$ as well, then
because $K'\cup\bigcup_{K\in\mathcal{G}}K$ is compact, 
it follows that there is a subsequence 
$\{y_{j_k}\}_{k=1}^\infty$ and a point $y_\infty\in K'\cup\bigcup_{K\in\mathcal{G}}K$ with
$y_{j_k}\to y_\infty$ as $k\to\infty$. 
If $y_\infty\in K$ for some $K\in\mathcal{G}$, then by the
upper semicontinuity of the collection $\mathcal{G}$ we have that $x_\infty\in K$, which violates the assumption that $K'\cap K=\emptyset$. 
Hence $y_\infty\in K'$. 
Thus $\mathcal{G}\cup\{K'\}$ is also
upper semicontinuous.
\end{proof}

The following is our main separation theorem.
Recall that a compact set $K \subset \Sp^2$ \emph{separates}
$x$ and $y$ if they  belong to different components
of $\Sp^2 \setm K$.

\begin{thm}\label{thm-comp-separ-122}
 Let $T$ be a compact connected subset of\/ $\Sp^2$, 
and $\mathcal{K}:=\{K_j\}_{j}$ be a nonempty finite or countable
upper semicontinuous collection of pairwise disjoint compact  sets
such that\/ $\bigcup_{j} K_{j}$ is compact.

Let $x,y\in\Sp^2\setm(T\cup\bigcup_{j} K_j)$.
If for each $j$, the set $T\cup K_j$ does not separate $x$ from $y$,
then $T\cup\bigcup_{j} K_j$ does not separate $x$ and $y$ either.
\end{thm}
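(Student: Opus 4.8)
The plan is to prove the contrapositive in spirit but organized as a direct argument by separating the two cases of the collection being finite versus countably infinite, reducing the countable case to the finite one via a compactness/upper-semicontinuity limiting argument. The core separation fact I would rely on for the finite case is a Janiszewski-type theorem on $\Sp^2$: if two closed sets $A$ and $B$ each fail to separate $x$ from $y$, and $A \cap B$ is connected (in particular, if $A \cap B = \emptyset$ or is a single shared connected piece), then $A \cup B$ does not separate $x$ from $y$. Here the shared piece across all the sets $T \cup K_j$ is the connected continuum $T$ itself, and the $K_j$ are pairwise disjoint and disjoint from the relevant overlap structure, so Janiszewski's theorem is exactly the tool adapted to this setting.

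First I would dispose of the \textbf{finite case}, say $\K = \{K_1,\dots,K_n\}$. I would argue by induction on $n$. For the inductive step, write $A = T \cup \bigcup_{j=1}^{n-1} K_j$ and $B = T \cup K_n$. By the inductive hypothesis $A$ does not separate $x$ from $y$, and by hypothesis $B$ does not. Their intersection is $A \cap B = T \cup \bigl(\bigl(\bigcup_{j=1}^{n-1}K_j\bigr) \cap K_n\bigr) = T$, since the $K_j$ are pairwise disjoint; and $T$ is connected. Janiszewski's theorem then yields that $A \cup B = T \cup \bigcup_{j=1}^n K_j$ does not separate $x$ from $y$, completing the induction. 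I would make sure to state the precise form of Janiszewski's theorem I am invoking and cite it; the hypothesis that $T$ is connected is exactly what makes the intersection connected and the theorem applicable.

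The \textbf{countably infinite case} is where the real work lies, and I expect it to be the main obstacle. Suppose for contradiction that $T \cup \bigcup_{j=1}^\infty K_j$ separates $x$ from $y$, while each finite union $T \cup \bigcup_{j=1}^n K_j$ does not (the latter by the finite case). The idea is that separation is "witnessed" by a connected set joining $x$ to $y$ that must exist when there is no separation, and to extract a limiting continuum. More concretely, since $x$ and $y$ lie in the same component of the open set $\Sp^2 \setm (T \cup \bigcup_{j=1}^n K_j)$ for every $n$, I would choose a curve (equivalently a connected compact set) $\Gamma_n$ joining $x$ to $y$ inside this open complement — using local connectedness of the complement of a compact set in the sphere. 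By the Hausdorff-compactness of sequences of compact subsets of $\Sp^2$ (the same Blaschke-type selection lemma already invoked in the proof of Theorem~\ref{thm-comp-usc}), a subsequence of $\{\Gamma_n\}$ converges in the Hausdorff metric to a compact set $\Gamma$, which is connected (again by the argument used in the proof of Theorem~\ref{thm-comp-usc}\ref{it-comp-usc-a}) and contains both $x$ and $y$.

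The crux is then to show $\Gamma \cap \bigl(T \cup \bigcup_{j=1}^\infty K_j\bigr) = \emptyset$, which would place $x$ and $y$ in the same component of the complement and give the desired contradiction. Since each $\Gamma_n$ avoids $T$ and $\Gamma_n \to \Gamma$, and $T$ is compact, $\Gamma$ avoids the $\varepsilon$-interior and one checks $\Gamma \cap T = \emptyset$ provided the $\Gamma_n$ stay uniformly away from $T$; this requires care, since a priori $\Gamma$ could touch $T$ or some $K_j$ in the limit. This is precisely where \emph{upper semicontinuity} of the collection $\K$ enters: it prevents the separating sets $K_j$ from "accumulating" onto the limit curve in a way that would let $\Gamma$ meet $\bigcup_j K_j$ without being detected at any finite stage. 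I would argue that if $\Gamma$ met some point $z \in K_m$, then since $\Gamma_n$ avoids $K_m$ for all $n \ge m$, the approach of $\Gamma_n$ to $z$ forces nearby points of the $K_j$ with $j > n$ to cluster at $z$; upper semicontinuity then constrains these clusters to lie in a single member of $\K\cup\{\text{limit}\}$, producing a contradiction with disjointness and with $z$ being an isolated meeting point. Pinning down this last clustering argument cleanly — correctly combining the Hausdorff limit of the curves with the upper-semicontinuity control on the $K_j$ — is the delicate heart of the proof, and I would devote the bulk of the writeup to making it rigorous.
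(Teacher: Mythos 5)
Your treatment of the finite case (induction on $n$, applying Janiszewski's theorem to $A=T\cup\bigcup_{j<n}K_j$ and $B=T\cup K_n$, whose intersection is the connected set $T$ by pairwise disjointness) is correct, and it is exactly how the paper disposes of that case. The infinite case, however, contains a fatal gap, and it sits precisely at the step you yourself identify as the crux. Under your contradiction hypothesis the set $S:=T\cup\bigcup_{j=1}^\infty K_j$ separates $x$ from $y$; the Hausdorff limit $\Gamma$ of your curves $\Gamma_n$ is a continuum containing both $x$ and $y$, and therefore $\Gamma$ \emph{must} meet $S$ (otherwise $x$ and $y$ would lie in one component of $\Sp^2\setm S$). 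So the statement you hope to prove, $\Gamma\cap S=\emptyset$, is not merely delicate: it is automatically false whenever the hypothesis you want to contradict holds, so no refinement of the limiting argument can establish it. The clustering mechanism you invoke is also illusory: each $\Gamma_n$ lies in the \emph{open} complement of $T\cup\bigcup_{j\le n}K_j$, so the points $z_n\in\Gamma_n$ converging to a point $z\in K_m$ are simply points of that open complement; nothing forces points of the members $K_j$ with $j>n$ to accumulate at $z$, and upper semicontinuity of $\mathcal{K}$ constrains only sequences of points lying \emph{in} members of $\mathcal{K}$, never points on your curves.

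The reason the approach cannot be repaired is that it uses countability only to exhaust $\bigcup_j K_j$ by finite subunions, whereas the theorem is genuinely false for uncountable collections: take a round circle $J\subset\Sp^2$, let $T=\{p\}$ for some $p\in J$, and let $\mathcal{K}=\{\{q\}:q\in J\}$ be the collection of all singletons of $J$. This is an upper semicontinuous collection of pairwise disjoint compacta with compact union $J$; each $T\cup\{q\}$ is a set of at most two points and separates nothing, yet $J$ separates the two complementary discs. Your limiting argument, run over finite subfamilies whose union is dense in $J$, produces a limit continuum that simply crosses the circle, and no contradiction appears. Any correct proof must therefore use countability in an essential, cardinality-theoretic way, and this is the idea missing from your proposal and supplied by the paper: one passes to a minimal separating subfamily $\widetilde{\mathcal{K}}$ (via a nested construction together with Lemma~\ref{lem-separate-cap}), shows that some $K_m\in\widetilde{\mathcal{K}}$ must be isolated from the union of the remaining members --- for otherwise $\widetilde{\mathcal{K}}$, suitably topologized, would be a countable compact perfect Hausdorff space, which cannot exist --- and then strips $K_m$ off using Janiszewski's theorem (Theorem~\ref{thm-janiszewiski}), contradicting minimality. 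That isolation/perfect-set argument is where countability and upper semicontinuity do their real work, and it has no counterpart in your proposal.
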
 

In the special case when $T= \emptyset$ and all the $K_j$
are connected this follows
from Moore~\cite[Theorem~V.2.3, p.\ 312]{moore62}.

In order to prove this separation theorem we need to use
the following result.

\begin{thm} \label{thm-janiszewiski}
\textup{(Janiszewski's theorem)}
If $F_1,F_2 \subset \Sp^2$ are compact and such that
$F_1 \cap F_2 $ is connected, and
$x,y \in \Sp^2 \setm (F_1 \cup F_2)$ are not separated by
$F_1$ nor by $F_2$, then they are not separated by $F_1 \cup F_2$.
\end{thm}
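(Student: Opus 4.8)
The plan is to treat the finite and the countable cases separately, obtaining the finite case by induction with Janiszewski's theorem (Theorem~\ref{thm-janiszewiski}) and the countable case by a Hausdorff--limit argument that exploits the upper semicontinuity of $\K$.

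For the finite case, suppose $\K=\{K_1,\dots,K_n\}$ and induct on $n$; the base case $n=1$ is exactly the hypothesis that $T\cup K_1$ does not separate $x$ from $y$. For the inductive step set $F_1=T\cup K_1\cup\dots\cup K_{n-1}$ and $F_2=T\cup K_n$. Since the $K_j$ are pairwise disjoint, $F_1\cap F_2=T\cup((K_1\cup\dots\cup K_{n-1})\cap K_n)=T$, which is connected by hypothesis. By the induction hypothesis $F_1$ does not separate $x$ from $y$, and by hypothesis neither does $F_2$, so Janiszewski's theorem applies and shows that $F_1\cup F_2=T\cup K_1\cup\dots\cup K_n$ does not separate $x$ from $y$. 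The pairwise disjointness is used precisely here: it forces the overlap of the two pieces to be the connected set $T$, which is what lets Janiszewski's theorem run at every stage.

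For the countable case I would argue by contradiction. Assume $F:=T\cup\bigcup_{j=1}^\infty K_j$ separates $x$ and $y$, and write $F_n=T\cup K_1\cup\dots\cup K_n$. By the finite case no $F_n$ separates $x$ from $y$; since $\Sp^2\setm F_n$ is open in the locally pathconnected sphere, $x$ and $y$ lie in a common path component, so there is an arc $\ga_n$ from $x$ to $y$ contained in $\Sp^2\setm F_n$. Using compactness of the space of compact subsets of $\Sp^2$ in the Hausdorff metric --- the same tool used in the proof of Theorem~\ref{thm-comp-usc}\ref{it-comp-usc-a}, see \cite{BH} --- I pass to a subsequence along which $\ga_n$ converges in the Hausdorff metric to a set $L$. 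As in that proof $L$ is connected, and evidently $x,y\in L$. Were $L$ disjoint from $F$, it would witness that $x$ and $y$ lie in one component of $\Sp^2\setm F$, contradicting the assumed separation; thus everything reduces to understanding $L\cap F$.

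The hard part, and the step I expect to be the main obstacle, is that the limit continuum $L$ may meet $F$: although $\ga_n$ avoids $T\cup K_1\cup\dots\cup K_n$, points of the arcs can accumulate on $T$ or on any individual $K_j$, so $L$ need not lie in the complement of $F$. I would control this accumulation through upper semicontinuity. The crucial rigidity is that any Hausdorff limit $M$ of distinct members of an upper semicontinuous collection which meets some member $K_m$ is \emph{contained} in $K_m$: if $z\in M\cap K_m$ and $w\in M$, then choosing points $z_k,w_k$ in the converging members with $z_k\to z$ and $w_k\to w$, the definition of upper semicontinuity forces a subsequence $w_{k_l}\to w'\in K_m$, whence $w=w'\in K_m$. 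Applying this to the members $K_j$ on which $L$ accumulates (and invoking Theorem~\ref{thm-comp-usc}\ref{it-comp-usc-c} to adjoin the resulting limit set legitimately), together with the connectedness of $T$ used to absorb any accumulation on $T$ via Janiszewski's theorem, the aim is to show that the entire obstruction to joining $x$ to $y$ within $\Sp^2\setm F$ concentrates in the limit onto a single set $T\cup K_m$, so that $T\cup K_m$ already separates $x$ from $y$ --- contradicting the hypothesis. Turning this concentration into a rigorous contradiction, and in particular excluding the possibility that infinitely many distinct $K_j$ conspire to block every connecting arc while no single $T\cup K_m$ does, is the delicate core of the argument.
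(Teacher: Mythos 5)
Your proposal does not prove the statement it was asked to prove. The statement is Janiszewski's theorem itself (Theorem~\ref{thm-janiszewiski}), and your argument invokes Janiszewski's theorem as a tool in both the finite case (at every inductive step) and the countable case (to ``absorb accumulation on $T$''), so as a proof of this statement it is circular from the first line. What you have actually sketched is an attack on Theorem~\ref{thm-comp-separ-122}, the paper's separation theorem for a set $T$ together with a countable upper semicontinuous collection $\{K_j\}_j$ --- a different result, which the paper proves \emph{using} Janiszewski's theorem as a black box. For Janiszewski's theorem itself the paper gives no proof and none was expected: it cites Janiszewski's original article and the expositions in Moore, Newman and Pommerenke. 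A genuine proof requires input your sketch never touches (classically, the unicoherence of $\Sp^2$, or an argument via the Jordan separation machinery); no amount of Hausdorff-limit or upper-semicontinuity reasoning about countable collections can substitute, since the theorem concerns two arbitrary compact sets with connected intersection.

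Even read charitably as a proposal for Theorem~\ref{thm-comp-separ-122}, the countable case has a gap that you yourself flag: the ``concentration'' step --- excluding that infinitely many distinct $K_j$ jointly block every arc while no single $T\cup K_m$ separates --- is precisely the whole difficulty, and the Hausdorff limit $L$ of the arcs $\ga_n$ genuinely can meet $T$ and several $K_j$ simultaneously, so the rigidity lemma you state (a limit of members meeting $K_m$ lies inside $K_m$) does not apply to $L$, which is not a limit of members of the collection. The paper's actual proof proceeds quite differently: it considers the family $\mathcal{C}$ of subunions $\bigcup_{j\in A}K_j$ for which $T\cup\bigcup_{j\in A}K_j$ separates $x$ from $y$, builds a decreasing chain in $\mathcal{C}$ whose intersection $I=\bigcup_{K\in\Kt}K$ is a minimal-type separating union (Lemma~\ref{lem-separate-cap}), then shows that if no member of $\Kt$ were isolated, $\Kt$ would be a countable compact perfect Hausdorff space, hence uncountable --- a contradiction; an isolated $K_m$ is then split off and Janiszewski's theorem is applied exactly once to $T\cup K_m$ and $T\cup(I\setm K_m)$. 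That minimality-plus-perfectness argument is the missing idea your sketch would need, and it is not recoverable from the limit-arc approach as you describe it.
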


This result was obtained by 
Janiszewski~\cite[Theorem (Twierdwenie)~A, p.\ 48]{Janiszewski}.
For a proof in English see, e.g., 
Moore~\cite[Theorem~IV.20, p.\ 173]{moore62},
Newman~\cite[Corollary to Theorem~8.1, p.~101]{newman} or
Pommerenke~\cite[Theorem~1.9]{PomUniv}.
 
We also use the following simple lemma.

\begin{lem}  \label{lem-separate-cap}
Assume that $F_0\supset F_1\supset\ldots$ are compact sets which separate
$x$ and $y$. Then so does $F:=\bigcap_{j=0}^\infty F_j$.
\end{lem}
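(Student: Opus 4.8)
The plan is to argue by contradiction, reducing the statement about the infinite intersection $F$ to one of the given finite-stage separations by means of a compactness argument. First I would record that $x,y\in\Sp^2\setm F$: since each $F_j$ separates $x$ and $y$, both points lie in $\Sp^2\setm F_j$, and as $F=\bigcap_{j}F_j\subset F_j$ we have $\Sp^2\setm F_j\subset\Sp^2\setm F$, so in particular $x,y\notin F$. Suppose now, towards a contradiction, that $F$ does \emph{not} separate $x$ and $y$; that is, $x$ and $y$ lie in the same component $U$ of the open set $\Sp^2\setm F$.

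The key reduction is to produce a compact connected set joining $x$ and $y$ inside $\Sp^2\setm F$. Because $\Sp^2$ is a manifold it is locally pathconnected, hence so is its open subset $\Sp^2\setm F$; therefore the component $U$ is open and pathconnected, and I can choose a path $\gamma\colon[0,1]\to U$ with $\gamma(0)=x$ and $\gamma(1)=y$. Write $\Gamma=\gamma([0,1])$, a compact connected subset of $\Sp^2\setm F$ containing both $x$ and $y$.

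Now I would run the compactness step. The sets $\Gamma\cap F_j$ form a nested decreasing sequence of compact subsets of $\Gamma$ whose total intersection is $\Gamma\cap F=\emptyset$. By the finite intersection property of the compact set $\Gamma$, a nested sequence of \emph{nonempty} compact sets would have nonempty intersection; hence there is some index $j_0$ with $\Gamma\cap F_{j_0}=\emptyset$. Then $\Gamma$ is a connected subset of $\Sp^2\setm F_{j_0}$ containing both $x$ and $y$, so $x$ and $y$ lie in the same component of $\Sp^2\setm F_{j_0}$, contradicting the hypothesis that $F_{j_0}$ separates $x$ and $y$. This contradiction shows that $F$ separates $x$ and $y$.

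The main obstacle, though minor here, is the step asserting a compact connected (indeed pathwise) join of $x$ and $y$ within the open set $\Sp^2\setm F$; this is precisely where local (path)connectedness of $\Sp^2$ is used, and without it one could not pass from ``$x,y$ in the same component'' to ``$x,y$ joined by a compact connected set'' on which the nested-compactness argument operates. Everything else is the standard finite-intersection-property argument for decreasing compact sets.
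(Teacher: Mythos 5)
Your proposal is correct and follows essentially the same route as the paper's own proof: argue by contradiction, use local pathconnectedness of the sphere (connectedness equals pathconnectedness for open sets) to obtain a compact curve joining $x$ and $y$ in $\Sp^2\setm F$, then apply the finite intersection property to the nested compact sets $\gamma\cap F_j$ to find some $F_{j_0}$ disjoint from the curve, contradicting the separation hypothesis. The only difference is presentational — you spell out the reduction via the component $U$ and the nested sets $\Gamma\cap F_j$ explicitly, while the paper states the same argument more tersely.
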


\begin{proof}
Suppose not. 
Then (since connectedness and pathconnectedness coincide for open sets)
there is a curve $\ga$ in the open set
$\Sp^2\setminus F$ connecting $x$ and $y$. 
Because $\{F_j\}_{j=0}^\infty$ is a countable
decreasing chain of compact sets, and $\gamma$ 
is also compact and disjoint from 
$F$, it follows from Cantor's encapsulation theorem (which is sometimes
called the finite intersection property) that $\gamma$ lies in 
the complement of some $F_k$, that is, $F_{k}$ does not separate $x$ from $y$. 
This is a contradiction  and hence $F$ separates $x$ and~$y$.
\end{proof}

\begin{proof}[Proof of Theorem~\ref{thm-comp-separ-122}] 
The case when $\K$ is finite follows directly from iteration of 
Janiszewski's theorem, and we assume therefore that 
$\K=\{K_j\}_{j=0}^\infty$ is infinite in 
the rest of the proof.

We will prove this theorem by contradiction. 
So suppose that $x$ and $y$ are separated by $T\cup\bigcup_{j=0}^\infty K_j$.

Let $\mathcal{C}$ denote the collection of all (necessarily nonempty) compact
sets of the form
$\bigcup_{j\in A}K_j$ such that $x$ and $y$ are separated
by $T \cup \bigcup_{j\in A}K_j$.
We will construct a decreasing sequence in $\mathcal{C}$.
Let first $C_0=\bigcup_{j=0}^\infty K_j$ which belongs to $\mathcal{C}$ by assumption.
Proceed inductively for $j=0,1,\ldots$ as follows. 
If $K_j \subset C_{j}$ and there is some $C' \in \mathcal{C}$
such that $K_j \not \subset C' \subset C_{j}$, then let $C_{j+1}$ be any
such $C'$; otherwise, let $C_{j+1}=C_{j}$.

Let $I = \bigcap_{j=0}^\infty C_j$. 
That $I \ne \emptyset$ follows from 
Cantor's encapsulation theorem
(which is sometimes called 
the finite intersection property).
The pairwise disjointness of the sets $K_j$ 
yields that $I$ is of the form $\bigcup_{j\in \At}K_j$
for some (nonempty) $\At$.
Let $\Kt:=\{K_j : j \in \At\}$.
Lemma~\ref{lem-separate-cap} applied to $F_j=T\cup C_j$ shows that 
$I\in\mathcal{C}$.

\medskip
\emph{Claim}.
\emph{There exists some $K_m \in \Kt$ for which 
$\overline{\bigcup_{K \in \Kt \setm \{K_m\}} K}$ and
$K_m$ are disjoint\/ \textup{(}that is, $K_m$ is isolated from 
the union of all the other $K \in \Kt$\textup{)}.}
\medskip

As $T \cup K_m$ does not separate $x$ and $y$, $\Kt$ has at least two elements.
Since the $K_j$ are pairwise disjoint and compact, 
the claim is trivial if $\At$ is finite,
and hence we may assume that $\At$ is infinite.
In order to show the claim  we will argue by contradiction and show that
then $\Kt$ is an infinite 
compact perfect Hausdorff topological space, when equipped with
the topology below.
Since such spaces are uncountable
(see e.g.\ 
Hocking--Young~\cite[p.~88, Theorem~2-80]{HY}), the fact that $\Kt$ is
countable causes a contradiction, thus showing that the claim is true.

Assume therefore that the claim is false.
(In fact we will only use this assumption in Step~3 below.)
We equip $\Kt$ with a topology as follows.
A sequence $\{K'_{j}\}_{j=1}^\infty$ in $\Kt$ is said to \emph{converge} to 
$K'\in\Kt$ if 
\begin{equation}   \label{eq-deff-top}
\lim_{j\to\infty}\, \inf\biggl\lbrace
       \eps>0 : K'_{j}\subset \bigcup_{x\in K'}B(x,\eps)\biggr\rbrace=0.
\end{equation}
To obtain a topology on $\Kt$ we say that a set 
$\mathcal{F}\subset\Kt$ is
\emph{closed} if whenever 
$\{K'_j\}_{j=1}^\infty$ is a sequence in $\mathcal{F}$ 
converging to some $K'\in\Kt$,
then $K'\in\mathcal{F}$.

\medskip
\emph{Step}~1. \emph{$\Kt$ is Hausdorff.}
If $K'_{1}, K'_{2}\in\Kt$ are two distinct sets, then 
choose $0<\eps<\frac{1}{2}\text{dist}(K'_{1},K'_{2})$
(which is positive since $K'_1$ and $K'_2$ are disjoint and compact).
Let $\mathcal{O}_1$ be the collection of all
$K\in\Kt$ for which $K\subset \bigcup_{x\in K'_{1}}B(x,\eps)=:U$.
In order to show that $\mathcal{O}_1$ is open, 
let $\{K''_j\}_{j=1}^\infty$ be a sequence in $\Kt \setm \mathcal{O}_1$
and assume that $K''_j \to K'' \in \Kt$.
Then there are points $x_j \in K''_j \setm U$.
As $\bigcup_{K \in \Kt} K \setm U = I\setm U$ is compact, it contains a point $x$ 
such that a subsequence of  $\{x_{j}\}_{j=1}^\infty$ converges to $x$.
Since $x_j\in K''_j\to K''$ and $K''$ is compact, 
we conclude from \eqref{eq-deff-top} that
$x \in K''$ and therefore $K'' \notin \mathcal{O}_1$,
showing that the complement of $\mathcal{O}_1$ is closed and thus
that $\mathcal{O}_1$ is open.

Similarly, let $\mathcal{O}_2$ be the
collection of all $K\in\Kt$ for which 
$K\subset \bigcup_{x\in K'_{2}}B(x,\eps)$, which is also open
and disjoint from $\mathcal{O}_1$.
Moreover
$K'_{1}\in\mathcal{O}_1$ and $K'_{2}\in\mathcal{O}_2$.
We have thus shown that the topology on $\Kt$ is Hausdorff.

\medskip

\emph{Step}~2. \emph{$\Kt$ is  compact.}
Let $\{\G_\alp\}_{\alp}$ be a (possibly uncountable) cover of $\Kt$ by open sets.
For each $j \in \At$ 
there is some element $\G_{\alpha_j}$ in this cover such that $K_j \in\G_{\alpha_j}$.
Let $\G'_j=\bigcup_{k\in \At, \ k \le j} \G_{\alpha_k}$.

Assume  that $\Kt \setm \G'_j$ is nonempty for all $j$.
Then there is $K'_j \in \Kt \setm \G'_j$.
Let $x_j \in K'_j$ be arbitrary.
By the compactness of $I=\bigcup_{K \in \Kt} K$ there exists some 
$K' \in \Kt$ and $x \in K'$ such that a subsequence
$x_{j_k} \to x$ as $k \to \infty$.
Suppose that $\{K'_{j_k}\}_{k=1}^\infty$ does 
\emph{not} converge to $K'$ in the above topology,
i.e.
\[
 \tau :=\limsup_{k\to\infty}\inf\biggl\lbrace
       \eps>0 : K'_{j_k}\subset \bigcup_{x\in K'}
         B(x,\eps)\biggr\rbrace > 0.
\]
Then for each  $n$ we can find $j_{k_n}>n$ such that 
\[
K'_{j_{k_n}}\setminus\bigcup_{x\in K'}B\bigr(x,\tfrac{1}{2}\tau\bigr) 
\ne \emptyset.
\]
Let $y_n$ be a point in this set. 
Again by the  compactness of $I=\bigcup_{K \in \Kt} K$,
the sequence $\{y_n\}_{n=1}^\infty$ 
has a converging subsequence, but its limit cannot be in $K'$, 
violating the upper semicontinuity of $\Kt$.
(That $\Kt$ is upper semicontinuous follows from
Theorem~\ref{thm-comp-usc}\,\ref{it-comp-usc-b}.)
Hence $K'_{j_k}$ converges to $K'$. Note that $K'=K_l$ for some
$l\in\At$. By construction $K'\in\G'_l$, and so because $\G'_l$ is open, the tail
end of the sequence $K'_{j_k}$ must lie in $\G'_l$, violating the choice of
$K'_{j_k}$. 

Thus there is some $j$ such that $\Kt=\G'_j$, from which
it follows that $\{\G_k: k \le j \text{ and }  k \in \At \}$ 
is a finite subcover of $\{\G_\alp\}_{\alp}$
covering $\Kt$, and hence $\Kt$ is compact.

\medskip

\emph{Step}~3.  \emph{$\Kt$ is  perfect.}
Let $K' \in \Kt$.
Then by assumption
$K'$ is not isolated from $\bigcup_{K \in \Kt \setm \{K'\}} K$.
Thus there is a sequence $K'_j \in \Kt\setm\{K'\}$
and points $x_j \in K'_j$ and $x_0 \in K'$ such that $x_j \to x_0$
as $j \to \infty$.
It follows as in Step~2 that $K'_j \to K'$, showing that
$\Kt$ is perfect.

\medskip
The above three steps together indicate that $\Kt$ must be uncountable, which is not 
possible. This completes the proof of the claim.

Now let $K_m \in \Kt$ be as in the claim, and 
let $\Kt'=\Kt \setm \{K_m\}$.
It follows that $I':=\bigcup_{K \in \Kt'} K=I\setm K_m$ is a closed
subset of the compact set $I=\bigcup_{K \in \Kt} K$, 
and thus must be compact.
As $K_m \not\subset I' \subset C_{m}$ and $K_m\subset I$,
it follows from the construction of $C_{m+1}$
that
$I'\notin \mathcal{C}$,
i.e.\ $T \cup I'$ does not separate $x$ and $y$.
By assumption neither does $T \cup K_m$.
As $(T \cup K_m) \cap (T \cup I') = T$ is connected, it follows from
Janiszewski's theorem (Theorem~\ref{thm-janiszewiski})
that $x$ and $y$ are not separated by $T \cup I$.

This is our final contradiction and we have thus completed the proof.
\end{proof}

We now formulate and prove the main result of this section.

\begin{thm}   \label{thm-newman-count-singleton}
Let\/ $\Om\subset\R^2$ be bounded and assume 
that\/ $\Sp^2\setm\Om$ consists of\/ {\rm(}possibly uncountably many\/{\rm)} 
singleton components and at most countably many continuum components.
Let $E$ be the union of these continuum components and assume that 
$\itoverline{E}\setm E=F\cup E'$, where $F$ is compact and $E'$ is
at most countable.

If every boundary point $x_0\in\bdry\Om$ is locally accessible
from\/ $\Om$, then\/ $\Om$ is finitely connected at the boundary. 
\end{thm}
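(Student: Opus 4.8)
The plan is to reduce the statement to a purely quantitative claim about the counting function $N(r)=N(r,x_0)$ and then to derive a contradiction from the separation theorem, Theorem~\ref{thm-comp-separ-122}. First I would note that by Proposition~\ref{prop1-fin}, $\Om$ is finitely connected at a boundary point $x_0$ precisely when, for every $r>0$, one has $N(r)<\infty$ and $x_0\notin\itoverline{\Hr{r}}$. The second condition is exactly statement~\ref{it-Hr-pt} of Theorem~\ref{thm-newman-pt}, which is there shown to be equivalent to local accessibility~\ref{it-locacc-pt}; since every $x_0\in\bdy\Om$ is assumed locally accessible, it already holds. Hence the whole theorem reduces to proving that $N(r,x_0)<\infty$ for every $r>0$ and every $x_0\in\bdy\Om$. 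Because $x_0\notin\itoverline{\Hr{r}}$ for all $r$, Lemma~\ref{lem-dec} applies and $N(\,\cdot\,)$ is nonincreasing; consequently, if $N(r_0)=\infty$ for some $r_0$, then $N(r)=\infty$ for all $r\le r_0$, so I may freely shrink the radius in the argument below.

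Arguing by contradiction, suppose $N(r_0)=\infty$ for some $x_0$ and some small $r_0>0$, and let $G_1,G_2,\dots$ be distinct components of $B(x_0,r_0)\cap\Om$ each having $x_0$ in its closure. Choose $a_1\in G_1$ and $a_2\in G_2$. Since $a_1,a_2$ lie in different components of $B(x_0,r_0)\cap\Om=\Sp^2\setm M$, where $M:=\Sp^2\setm(B(x_0,r_0)\cap\Om)$ is compact, the set $M$ separates $a_1$ from $a_2$. The goal is to contradict this by writing $M$ in the form $T\cup\bigcup_j K_j$ to which Theorem~\ref{thm-comp-separ-122} applies, forcing $M$ \emph{not} to separate $a_1$ from $a_2$.

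The packaging of $M$ is where the structural hypothesis enters. Writing $\Sp^2\setm\Om=E\cup S$, with $E=\bigcup_jK_j$ the union of the at most countably many continuum components and $S$ the union of the singleton components, I would take the connected compact anchor $T$ to be $\Sp^2\setm B(x_0,r_0)$ together with the unbounded component of $\Sp^2\setm\Om$, and, in the case that $x_0$ lies in a nondegenerate continuum component $K^*$, also $K^*$ itself, shrinking $r_0$ first, via the freedom noted above, so that $K^*$ meets $\Sp^2\setm B(x_0,r_0)$ and $T$ stays connected. The remaining members are the bounded continua $K_j$, the points of the at most countable set $E'$ as singleton members, and a single member $D$ consisting of the compact set $F$ together with the closure of the remaining singletons; here I use that $\itoverline{E}\setm E=F\cup E'$ and that, since each of its points is a singleton component of $\Sp^2\setm\Om$, the set $D$ is compact and totally disconnected. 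Crucially, Theorem~\ref{thm-comp-separ-122} requires the members only to be compact, not connected, so $F$ and the dust $D$ may be taken as single members; upper semicontinuity of the family and compactness of the union $\itoverline{E}$ follow from Theorem~\ref{thm-comp-usc}\,\ref{it-comp-usc-a},\ref{it-comp-usc-b},\ref{it-comp-usc-c}, the last being used to adjoin $D$.

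The decisive step is to check that $T\cup K_j$ does not separate $a_1$ from $a_2$ for each member $K_j$. Local accessibility of $x_0$ furnishes curves $\ga_1,\ga_2$ joining $a_1,a_2$ to $x_0$ inside $B(x_0,r_0)\cap\Om$; their concatenation is a path from $a_1$ to $a_2$ whose only point outside $\Om$ is $x_0$. In the generic case, where $x_0$ lies in no $K_j$, this path avoids $T\cup K_j$ entirely and witnesses non-separation; for the single member equal to $\{x_0\}$, non-separation instead follows from the elementary fact that the ball with only the anchor and one interior point deleted remains connected, all other complement pieces still being present. Granting this, Theorem~\ref{thm-comp-separ-122} yields that $M=T\cup\bigcup_jK_j$ does not separate $a_1$ from $a_2$, contradicting their lying in distinct components of $B(x_0,r_0)\cap\Om$. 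I expect the main obstacle to be exactly this decisive step together with the correct treatment of the complement component through $x_0$: folding it into a connected anchor while keeping the access curves as separation witnesses for all other members (the troublesome subcase being $x_0$ in a nondegenerate continuum), and verifying pairwise disjointness, upper semicontinuity and compactness of the packaged family so that Theorem~\ref{thm-comp-separ-122} genuinely applies.
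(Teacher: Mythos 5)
You have correctly identified the reduction (via Proposition~\ref{prop1-fin} and Theorem~\ref{thm-newman-pt}, local accessibility already gives $x_0\notin\itoverline{\Hr{r}}$, so only $N(r)<\infty$ is at stake), and your idea of deriving a contradiction from Theorem~\ref{thm-comp-separ-122} is in the right spirit. But the case you yourself flag as ``the troublesome subcase'' --- $x_0$ lying on a nondegenerate continuum component $K^*$ --- is a genuine gap, and it cannot be closed by the packaging you propose, because the hypothesis you would need is simply false there. Take the slit disc $\Om=B((0,0),1)\setm([0,1)\times\{0\})$ and $x_0=\bigl(\tfrac12,0\bigr)$: here $K^*=\Sp^2\setm\Om$ is a single continuum, and for $a_1,a_2$ on opposite sides of the slit the anchor $T\supset K^*\cup(\Sp^2\setm B(x_0,r_0))$ \emph{already separates} $a_1$ from $a_2$. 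So the non-separation hypothesis ``$T\cup K_j$ does not separate'' of Theorem~\ref{thm-comp-separ-122} fails for every member once $K^*$ is folded into $T$; equally, your access-path witness dies because the concatenated path passes through $x_0\in T$. This is not a technicality: when $x_0$ sits on a continuum, distinct components $G_i$ of $B(x_0,r_0)\cap\Om$ attached to $x_0$ can be legitimately separated by $T$, so one may not pick $a_1,a_2$ in arbitrary components. The paper's proof exists precisely to circumvent this: it passes to $\Om'=\Sp^2\setm K^*$, shows $\Om'$ inherits local accessibility, invokes Theorem~\ref{thm-newman} (Newman's theorem for simply connected planar domains --- an essential external input your proposal never uses) to get that $\Om'$ is finitely connected at $x_0$, proves that each component $G$ of $\Om'\cap B_0$ with $x_0$ in its closure is \emph{simply connected}, and only then applies Theorem~\ref{thm-comp-separ-122} with anchor $T=\Sp^2\setm G$ (connected exactly because $G$ is simply connected) to points $z,w$ lying in the \emph{same} $G$; the non-separation hypotheses are verified not by access paths through $x_0$ but via Janiszewski's theorem and the fact that $z,w$ lie in components of $\Om\cap B_0$ having $x_0$ in their closure while $\dist(x_0,T_j)>0$. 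Without some substitute for this reduction, your contradiction machinery cannot start at continuum boundary points, which is where the theorem has all its content.

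There is also a secondary flaw in your packaging of the singleton ``dust''. Your member $D$ (the set $F$ together with the closure of the remaining singleton components) need be neither disjoint from the continua $K_j$ nor totally disconnected: nothing in the hypothesis $\itoverline{E}\setm E=F\cup E'$ prevents singleton components from accumulating on a continuum component, in which case $\itoverline{S\setm(F\cup E')}$ contains points (indeed can contain nondegenerate subcontinua) of some $K_j$, destroying the pairwise disjointness that Theorem~\ref{thm-comp-separ-122} requires. The paper avoids this by keeping only $\{F_j\}_j\cup\{F\}$ (with union the compact set $\itoverline{E}$) in the collection and disposing of the leftover dust $(\Sp^2\setm\Om)\setm\itoverline{E}$ at the very end by dimension theory: it is relatively closed and totally disconnected, hence cannot separate any open connected subset of $\Sp^2$. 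That repair would also fix your ``generic'' case ($\{x_0\}$ a singleton component), which is otherwise essentially sound --- there your argument, with $T=\Sp^2\setm B(x_0,r_0)$, in fact yields the stronger conclusion $N(r_0)\le 1$ --- but it does nothing for the continuum case above.
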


\begin{proof}
Fix $x_0 \in \bdy \Om$ and let $K$ be the component 
of $\Sp^2\setm\Om$ containing $x_0$. 
Then $\Om'=\Sp^2 \setm K$ is simply connected.
We shall show that $\Om'$ satisfies condition \ref{it-locacc}
of Theorem~\ref{thm-newman}, that is, each $x\in\partial\Om'$ is locally 
accessible from $\Om'$.

Let therefore $x\in\bdry\Om'\subset K$ and $r>0$ be arbitrary.
As $x$ is locally accessible from $\Om$, there exists $0<\de <r/2$ such that
every $y \in \Om \cap B(x,2\de)$ can be connected to $x$ by a curve in 
$\Om\cap B(x,r)$.
We want to show that $x$ is locally accessible also from $\Om'$.
Let $y \in (\Om'\setm\Om) \cap B(x,\de)$ and 
$z$ be a closest point to $y$ on $\bdy \Om$. 
Clearly, $z$  belongs to the same component of $\Sp^2\setm\Om$ as $y$.
In particular, $z \notin K$ and $|y-z| < \dist(y,K) \le |y-x| < \de$.
Since $z\in\bdry\Om$, there exists $y'\in \Om$ such that 
$|y'-z| < \dist(y,K) - |y-z| \le \dist(z,K)$.
It follows that  the line segment from $y$ to $y'$ does not hit $K$ 
and thus lies entirely in $\Om'$. As
$ |x-y'|\le |x-y|+|y-y'|<2\de $, 
there exists a curve in $\Om \cap B(x,r)\subset \Om'\cap B(x,r)$ 
connecting $y'$ and $x$.
Combining it with the line segment from $y$ to $y'$ provides a curve
in $\Om' \cap B(x,r)$ connecting $y$ to $x$.
This shows that $x$ is locally accessible from $\Om'$.
Since $\Om'$ is simply connected and satisfies condition~\ref{it-locacc}
of Theorem~\ref{thm-newman},
it follows from
Theorem~\ref{thm-newman} that $\Om'$ is finitely connected at the boundary,
and in particular at $x_0$. 
Our aim is to prove that also  $\Om$ is finitely connected at $x_0$.
 
If $K=\{x_0\}$ is a singleton, then let $r_0>0$ be arbitrary and $G:=B(x_0,r_0)$.
If $K$ is a continuum, then let $r_0>0$ be so small that 
$K\setm B(x_0,r_0)\ne\emptyset$ and let $B_0=B(x_0,r_0)$.
Then there are only finitely many components of $\Om'\cap B_0$ 
with $x_0$ in their closure, as $\Om'$ is finitely connected at $x_0$.
Let $G$ be one of these components. 
We will now show that $G$ is simply connected. Suppose not.
Then $\Sp^2\setm G$ has at least two components, each of 
which
is compact. 
Let $\widetilde{F}$ be the component of $\Sp^2\setm G$ 
containing the connected
set $K\cup(\Sp^2\setm B_0)$ (which is connected by Lemma~\ref{lem-connected}
and the choice of $r_0$).
Let $\widehat{F}$ be any other component of $\Sp^2\setm G$. Then clearly 
$\widehat{F}\subset \Om'\cap B_0$.
On the other hand, $\widehat{F}\cap\itoverline{G}$ is nonempty. 
Therefore by Lemma~\ref{lem-connected} again, $\widehat{F}\cup G$ is a 
connected subset of $\Om'\cap B_0$, which violates the fact that $G$ 
is a component of $\Om'\cap B_0$. Hence $G$ is simply connected.
(If $K$ is a singleton, then $G=B_0$ is clearly simply connected.)

We shall now prove that $\Om$ is finitely connected at $x_0$. 
Since $x_0$ is locally accessible from $\Om$, 
Theorem~\ref{thm-newman-gen} shows that 
$x_0 \notin \itoverline{\Hh_\Om (r_0,x_0)}$,
i.e.\ there exists $\de>0$ such that 
$B(x_0,\de)\cap \Hh_\Om (r_0,x_0)=\emptyset$.
Let $z,w\in G\cap\Om\cap B(x_0,\de)$. 
We will show that $z$ and $w$ cannot be separated in $G\cap\Om$.

Let $F_j$, $j=1,2\ldots$, denote all the 
(at most countably many)
continuum components of $\Sp^2\setm\Om$
(including $K$ if it is nonsingleton),  together with 
all the (at most countably many) singleton components of $E'\setm F$.
Note that $F\subset (\Sp^2\setm\Om)\setm E$ consists only of 
singleton components of 
$\Sp^2\setm\Om$ and is therefore totally disconnected.
Note also that if $K=\{x_0\}$ is singleton, we may have $x_0\in F\cup E'$.

Let $T_j=(F_j\cap G)\cup(\Sp^2\setm B_0)$, $j=1,2,\ldots$\,.
We shall show that $T_j$ does not separate $z$ and $w$.
This is clear if $F_j=K$.
If $F_j\ne K$, then $\dist(x_0,F_j)>0$ and hence also $\dist(x_0,T_j)>0$. 
As $z,w\notin \Hh_\Om (r_0,x_0)$, they belong to some components of
$\Om\cap B_0$ with $x_0$ in their closures.
Since 
$\Om\cap B_0  \subset B_0\setm F_j \subset \Sp^2\setm T_j$, 
we see that
those components of $\Om\cap B_0$ are subsets of $z$'s and $w$'s component(s)
in $\Sp^2\setm T_j$, which thus must contain $x_0$
(since $\dist(x_0,T_j)>0$).
But there can only be one such component in $\Sp^2\setm T_j$, so $T_j$ does
not separate $z$ and $w$.

Clearly, $T:=\Sp^2\setm G$  
does not separate $z$ and $w$.
Since $F$ is totally disconnected it 
has topological dimension zero, 
see e.g.\ Fedorchuk~\cite[Theorem~5 in Section~1.3.2]{fedorchuk}, 
and thus cannot separate the open set $G$, which
has topological dimension $2$, see 
e.g.\ \cite[Theorem~20 in Section~2.7]{fedorchuk}.
Thus $F \cup T$ 
does not separate $z$ and $w$.
Further, $T_j=(F_j\cap \itoverline{G})\cup(\Sp^2\setm B_0)$ is compact
and $T_j\cap T = \Sp^2\setm B_0$ is connected, so 
Janiszewski's theorem (Theorem~\ref{thm-janiszewiski}) implies that
$T_j\cup T = F_j\cup T$ does not separate $z$ and $w$ either.

As $\Sp^2\setm\Om$ 
and each of its components is compact, 
the collection $\{F_j\}_j$ is
upper semicontinuous, 
by
Theorem~\ref{thm-comp-usc}\,\ref{it-comp-usc-a} and~\ref{it-comp-usc-b}.
Theorem~\ref{thm-comp-usc}\,\ref{it-comp-usc-c} then yields that 
$\{F_j\}_{j}\cup\{F\}$ is also an upper semicontinuous collection
of pairwise disjoint compact sets.
By Theorem~\ref{thm-comp-separ-122} we now obtain that the set
\[
T\cup\itoverline{E}= T \cup \bigcup_{j} F_j \cup F
\]
does not separate $z$ and $w$.
Finally, $(\Sp^2\setm\Om)\setm\itoverline{E}$ is totally disconnected, so it
does not separate $\Sp^2\setm(T\cup\itoverline{E})$, i.e.\
$z$ and $w$ belong to the same component of 
\[
\Sp^2\setm((T\cup\itoverline{E})\cup((\Sp^2\setm\Om)\setm\itoverline{E}))
= \Sp^2\setm(T\cup(\Sp^2\setm\Om)) = G\cap\Om \subset \Om\cap B_0.
\]
This implies that each component $G$ of $\Om'\cap B_0$ with $x_0$ in its closure
contains exactly one component of $\Om\cap B_0$ with $x_0$ in its closure.
Indeed, if there were two such components $G_1$ and $G_2$ of $\Om\cap B_0$,
both contained in $G$, then we could find
$z\in G_1\cap B(x_0,\de)$ and $w\in G_2\cap B(x_0,\de)$, 
and the above argument
would give that $z$ and $w$ belong to the same component of $\Om\cap G$, 
which is a contradiction.

Since $\Om'$ is finitely connected at $x_0$ (or $G=B_0$ for singleton 
$K=\{x_0\}$), 
this implies that $\Om$ is also finitely connected.
As $x_0$ was arbitrary, this completes the proof. 
\end{proof}

\end{document}